\def\0{\bf 0}
\def\1{\bf 1}
\def\s{\sigma}
\def\E{{\bf E}}
\newtheorem{theorem}{Theorem}[section]
\newtheorem{prop}{Proposition}[section]
\newtheorem{lem}{Lemma}[section]
\newtheorem{rmk}{Remark}[section]
\newtheorem{corollary}{Corollary}[section]
\newtheorem{assumption}{Assumption}[section]
\begin{document}

\title{\bf Optimal variable selection and adaptive noisy Compressed Sensing}
\author{Ndaoud, M.$^{1}$ and Tsybakov, A.B.$^{1}$\\
{\small $^1$ CREST (UMR CNRS 9194), ENSAE,
5, av. Henry Le Chatelier, 91764 Palaiseau, France}
}

\maketitle


\begin{abstract}
In the context of high-dimensional linear regression models, we propose an algorithm of exact support recovery in the setting of noisy compressed sensing where all entries of the design matrix are independent and identically distributed standard Gaussian. This algorithm achieves the same conditions of exact recovery as the exhaustive search (maximal likelihood) decoder, and has an advantage over the latter of being adaptive to all parameters of the problem and computable in polynomial time. The core of our analysis consists in the study of the non-asymptotic minimax Hamming risk of variable selection. This allows us to derive a procedure, which is nearly optimal in a non-asymptotic minimax sense. Then, we develop its adaptive version, and propose a robust variant of the method to handle datasets with outliers and heavy-tailed distributions of observations. The resulting polynomial time procedure is near optimal, adaptive to all parameters of the problem and also robust.
 \end{abstract}

\noindent {\bf Keywords:} Compressed sensing, Square-Root SLOPE estimator,  exact recovery, Hamming loss, variable selection under sparsity, non-asymptotic minimax risk, robustness, median-of-means estimator.

 \section{Introduction}

\subsection{Statement of the problem}

Assume that we have the vector of measurements $Y\in\mathbb{R}^{n}$ satisfying 
\begin{equation}\label{eq:def}
     Y = X\beta + \sigma \xi 
\end{equation}
where $X\in\mathbb{R}^{n \times p}$ is a given design or sensing matrix, $\beta \in\mathbb{R}^{p}$ is the unknown signal, and $\sigma>0$. In this paper, we mostly focus on the setting where all entries of $X$ are independent identically distributed (i.i.d.) standard Gaussian random variables and the noise $\xi \sim \mathcal{N}\left(0,\mathbb{I}_{n} \right)$ is a standard Gaussian random vector  independent of $X$. Here, $\mathbb{I}_{n}$ denotes the $n\times n$ identity matrix. This setting is typical for noisy compressed sensing, cf. references below. We will also consider extensions to sub-Gaussian design $X$ and to noise $\xi$ with  heavy-tailed distribution. 

 In this paper, one of the main problems that we are interested in consists in recovering the support of $\beta$, that is the set $S_{\beta}$ of non-zero components of $\beta$. For an integer $s\le p$, we assume that $\beta$ is $s$-sparse, that is it has at most $s$ non-zero components.  We also assume that these components cannot be arbitrarily small. This motivates us to define the following set $\Omega^{p}_{s,a}$ of $s$-sparse vectors: 
$$  \Omega^{p}_{s,a} = \left\{\beta \in \mathbb{R}^{p}:\quad  |\beta|_{0} \leq s \quad \text{and} \quad   |\beta_{i}| \geq a, \quad \forall i \in S_{\beta} \right\}, $$
where $a>0$, $\beta_{i}$ are the components of $\beta$ for $i=1,\dots ,p,$ and $|\beta|_{0}$ denotes the number of non-zero components of $\beta$. We consider the problem of variable selection stated as follows: Given the observations $(X,Y)$, estimate the binary vector
$$ \eta_{\beta} = (\mathbf{1}\{\beta_{1} \neq 0\},\dots,\mathbf{1}\{\beta_{p} \neq 0\}), $$
where $\mathbf{1}\{\cdot\}$ denotes the indicator function.
In order to estimate $\eta_{\beta}$ (and thus the support $S_{\beta}$), we define a {\it selector} $\hat{\eta} = \hat{\eta}(X,Y)$ as a measurable function of the observations $(X,Y)$ with values in $\{0,1\}^{p}$. The performance of selector $\hat{\eta}$ is measured by the maximal risks
$$  \underset{\beta \in \Omega^{p}_{s,a}}{\sup}\mathbf{P}_{\beta}\left( \hat{\eta} \neq \eta_{\beta} \right) \qquad \text{and} \qquad   \underset{\beta \in \Omega^{p}_{s,a}}{\sup}\mathbf{E}_{\beta}\left| \hat{\eta} - \eta_{\beta} \right|$$
where $|\hat{\eta} - \eta_{\beta}|$ stands for the Hamming distance between $\hat{\eta}$ and $\eta_{\beta}$, $\mathbf{P}_{\beta}$ denotes the joint distribution of $(X,Y)$ satisfying (\ref{eq:def}), and $\mathbf{E}_{\beta}$ denotes the corresponding expectation. We say that a selector $\hat{\eta}$ achieves {\it exact support recovery} with respect to one of the above two risks if 
\begin{equation}\label{eq:exact:supp}  \underset{p\to \infty}{\lim} \underset{\beta \in \Omega^{p}_{s,a}}{\sup}\mathbf{P}_{\beta}\left( \hat{\eta} \neq \eta_{\beta} \right) = 0, 
\end{equation}
or
\begin{equation}\label{eq:exact:hamm} \underset{p\to \infty}{\lim} \underset{\beta \in \Omega^{p}_{s,a}}{\sup}\mathbf{E}_{\beta}\left| \hat{\eta} - \eta_{\beta} \right| = 0, 
\end{equation}
where the asymptotics are considered as $p \to \infty$ when all other parameters of the problem (namely, $n$, $s$, $a$, $\sigma$) depend on $p$ in such a way that $n = n(p) \to \infty$. In particular, the high-dimensional setting with $p\ge n$ is covered. {In the rest of the paper, we want to characterize sufficient and necessary conditions on the sample size $n$ in order to ensure  \eqref{eq:exact:supp} or \eqref{eq:exact:hamm} hold}. For brevity, the dependence of these four parameters on $p$ will be further omitted in the notation.
Since
$$  \mathbf{P}_{\beta}\left(\hat{\eta} \neq \eta_{\beta}  \right) \leq  \mathbf{E}_{\beta}|\hat{\eta} - \eta_{\beta}|,$$
the property  \eqref{eq:exact:hamm} implies \eqref{eq:exact:supp}. Therefore, we will mainly study the Hamming distance risk.

{\bf Notation.}
In the rest of this paper we use the following notation. For given sequences $a_{n}$ and $b_n$, we say that $a_{n} = \mathcal{O}(b_{n})$ (resp $a_{n} = \Omega(b_n)$) if, for some $c>0$, $a_{n} \leq c b_{n}$ (resp $a_{n} \geq c b_{n}$) for all integers $n$. We write $a_{n} \asymp b_{n} $ if $a_{n} = \mathcal{O}(b_{n})$ and $a_{n}=\Omega(b_{n})$. For ${\bf x},{\bf y}\in \mathbb{R}^{p}$, $\|{\bf x}\|$ is the Euclidean norm of ${\bf x}$, and ${\bf x}^{\top}{\bf y}$ the corresponding inner product. For a matrix $X$, we denote by $X_j$ its $j$th column. For $x,y \in \mathbb{R}$, we denote by $x\vee y$ the maximum of $x$ and $y$, by $\lfloor x \rfloor$ the maximal integer less than $x$ and we set $x_{+}=x \vee 0$. The notation  $\mathbf{1}\{\cdot\}$ stands for the indicator function, and $|A|$ for
the cardinality of a finite set $A$. We denote by $C$ and $c$ positive constants that can differ on different occurences.

\subsection{Related literature}
The literature on support recovery in high-dimensional linear models under sparsity is very rich and its complete overview falls beyond the format of this paper. Here, we outline some of the relevant results in the context of our contribution. 
\begin{itemize}
    \item The existing selectors (also sometimes called decoders) can be split into two main families. The first family consists of polynomial time algorithms, such as selectors based on the Lasso \cite{ZH,wainwrightaLasso}, orthogonal matching pursuit \cite{tropp,zhang,caiOMP} or thresholding \cite{fletcher,joseph}.  The second contains exhaustive search methods, for instance, the Maximum Likelihood (ML) decoder; they are generally not realizable in polynomial time. The ML decoder outputs the support $S_{\hat{\beta}}$ of the least squares solution
    $$ \hat{\beta} \in \underset{\theta: \,|\theta|_{0}=s}{\arg\min}\| Y - X\theta \|, $$
    which is the ML estimator of $\beta$ on the set $\{\beta: \,|\beta|_{0}=s\}$ when the noise is Gaussian. 
     \item The available results are almost exclusively of the form \eqref{eq:exact:supp}, where the asymptotics is considered under various additional restrictions on the behavior of ($n$, $s$, $a$, $\sigma$) as $p\to \infty$. 
     One of the restrictions concerns the magnitude of the noise.
      For $\sigma \asymp 1$, the noise and the entries of the sensing matrix $X$ are of the same order, cf. \cite{fletcher} and 
\cite{wainwrightb}, while \cite{aeron} assumes that $\sigma \asymp \sqrt{n}$, and hence the noise scales largely compared to the signal. 
{Our main results are non-asymptotic bounds on the risk and they can be used in both settings.
We also provide asymptotic corollaries where we assume that $\sigma \asymp 1$.}
    \end{itemize}
    
We now briefly overview results for specific asymptotics, with the emphasis on the {\it phase transition}, that is on the necessary and sufficient conditions of exact recovery.  To the best of our knowledge, they cover only the exact recovery of the type \eqref{eq:exact:supp}.   

In the strong noise regime $\sigma \asymp \sqrt{n}$, \cite{aeron} shows that necessary and sufficient conditions for \eqref{eq:exact:supp} are given by
    $ n = \Omega \left(s\log(\frac{p}{s})\right),$ and $a^{2} = \Omega \left( \log(p-s)  \right)$, and 
     the ML decoder is optimal in the sense that it achieves exact recovery under 
these conditions. However, the ML decoder requires prior knowledge of $s$. In the same regime $\sigma \asymp \sqrt{n}$, \cite{saligrama1} present a polynomial time procedure achieving  
     \eqref{eq:exact:supp} under sub-optimal sufficient conditions
    $n = \Omega \left(s\log(\frac{p}{s})\right)$, and $a^{2} = \Omega \left( (\log p)^{3}  \right). $
    This procedure requires prior knowledge of the threshold~$a$.

        For $\sigma \asymp 1$, which is in fact the general case (equivalent to fixed $\sigma$), the results are different. First, the following necessary condition for exact recovery (in the sense  \eqref{eq:exact:supp}) for any selector is obtained in \cite{wangWain}:
    \begin{equation}\label{wain} 
    n = \Omega\left( \frac{s\log(\frac{p}{s})}{\log(1+s\frac{a^{2}}{\sigma^{2}})} \vee \frac{\log(p-s)}{\log(1+\frac{a^{2}}{\sigma^{2}})} \right). 
    \end{equation}
Based on the analysis of~\eqref{wain}, one might be interested in several regimes for  the
signal-to-noise ratio (SNR)  $a/ \sigma$. In the regime $a/ \sigma= \Omega(1/\sqrt{s})$, 
we have $\|\beta\|^{2} = \Omega(\sigma^{2})$. This can be interpreted as the fact that the total signal is more powerful than noise. 
On the other hand, the condition $a/ \sigma= \Omega(1)$ corresponds to a regime where each 
signal component is more powerful than the noise. In \cite{rad}, it is shown that, under the restrictions  $a/ \sigma= \mathcal{O}(1)$ and $a/\sigma =\Omega( 1/\sqrt{s})$ on the signal-to-noise ratio $a/ \sigma$, the ML decoder is optimal in the sense that it  achieves \eqref{eq:exact:supp}  under the necessary condition \eqref{wain}. Note that the second term in  \eqref{wain} satisfies
\begin{equation}\label{wain0}
\frac{\log(p-s)}{\log(1+\frac{a^{2}}{\sigma^{2}})} \asymp \frac{\sigma^{2}\log(p-s)}{a^{2}} \quad \text{for }  a/ \sigma= \mathcal{O}(1).
\end{equation} 
In the general case,  that is with no restrictions on the joint behavior of $s$, $\sigma$ and $a$, the following sufficient condition for the ML decoder to achieve exact recovery \eqref{eq:exact:supp} is given in \cite{wainwrightb}: 
         \begin{equation}\label{wain1} n = \Omega\left(s\log\Big(\frac{p}{s}\Big) \vee \frac{\sigma^{2}\log(p-s)}{a^{2}} \right). \end{equation}
One can check that, for   $a/\sigma =\mathcal{O}( 1/\sqrt{s})$, the second terms in  \eqref{wain} and in \eqref{wain1} are dominant, while for $a/\sigma =\Omega( 1)$, the first terms are dominant. These remarks and \eqref{wain} - \eqref{wain1} lead us to the following table of phase transitions for exact recovery in the sense of \eqref{eq:exact:supp}. We recall that this table, as well as the whole discussion in this subsection, deal only with the setting where both $X$ and $\xi$ are Gaussian.

\vspace{3mm}

\begin{center}
\begin{tabular}{|c|c|r|}
   \hline
   SNR & Upper bound for ML& Lower bound  \\
   \hline 
   $a/\sigma =\mathcal{O}( 1/\sqrt{s})$ & \multicolumn{2}{c|}{$\frac{\sigma^{2}\log(p-s)}{a^{2}} $}
     \\ 
   \hline
   $a/ \sigma= \mathcal{O}(1)$ and $a/\sigma =\Omega( 1/\sqrt{s})$ & \multicolumn{2}{c|}{$\frac{s\log\left(\frac{p}{s}\right)}{\log\left(1+s\frac{a^{2}}{\sigma^{2}}\right)} \vee \frac{\log(p-s)}{\log\left(1+\frac{a^{2}}{\sigma^{2}}\right)}$} \\
   \hline
   $a/\sigma =\Omega( 1)$ & $s\log\left(\frac{p}{s}\right)$ & $\frac{s\log({p}/{s})}{\log\left(1+s a^{2}/\sigma^{2}\right)}$ \\
   \hline
\end{tabular}
\vspace{2mm}

\footnotesize{Table 1. Phase transitions in Gaussian setting: upper and lower bounds on the 
sample size $n$.}
  \end{center}
  
\vspace{3mm}
  
It remains an open question what is the exact phase transition for $a/\sigma =\Omega( 1)$. We also note that, in the zone $a/ \sigma= \mathcal{O}(1)$, the exact phase transitions in this table are attained by the ML decoder, which is not computable in polynomial time and requires the knowledge of $s$. 
         Known polynomial time algorithms are shown to be optimal only in the regime $a/\sigma =\mathcal{O}\left( 1/\sqrt{s}\right)$. In \cite{fletcher}, it is shown that Lasso is sub-optimal compared to the ML decoder. For the regime $a^{2}/\sigma^2=\mathcal{O}\left(\frac{\log(s)}{s}\right)$ and $s\asymp p$, the ML decoder requires $n=\Omega(p)$ observations to achieve exact recovery, while polynomial time algorithms require $n=\Omega(p\log(p))$. In this regime, the ML decoder is optimal, cf. 
Table 1.  In the regime of $a/\sigma = \Omega(1)$, { it is conjectured that there exists an algorithmic gap making the problem of exact recovery hard whenever the 
sample size satisfies $n\leq c\sigma^{2}s\log(p)$ , for some sufficiently small constant $c>0$ \cite{saligrama3}}.

Variable selection algorithms based on techniques from sparse graphs theory such as sparsification of the Gram matrix $X^{\top}X$ are suggested in \cite{jin1}, \cite{jin2} and \cite{jin3}. In those papers, phase transitions are derived for the asymptotics where the sparsity $s$ and the sample size $n$ scale as power functions of the dimension $p$.  In general, sufficient conditions for the ML decoder are less restrictive than conditions obtained for known polynomial time algorithms. A more complete overview of necessary and sufficient conditions for exact recovery defined in the form \eqref{eq:exact:supp} for different models can be found in \cite{aksoylar}.

\subsection{Contributions}\label{sec:contr}

The main contributions of this paper can be summarized as follows:
\begin{itemize}
\item based on a non-asymptotic study of the minimax Hamming risk, we propose a polynomial time method that achieves exact recovery with respect to both criteria \eqref{eq:exact:supp} and \eqref{eq:exact:hamm} under the same sufficient conditions \eqref{wain1} as  the ML decoder;
\item we develop an adaptive version of this method, which does not depend on the parameters $a, s$ and $\s$ and shares analogous properties; we also extend it to sub-Gaussian $X$ and $\xi$;
\item we propose a robust version of this method to handle data sets with outliers and heavy-tailed distributions of observations.
\end{itemize}
An open question stated in \cite{fletcher} is whether a computationally tractable algorithm can achieve a scaling similar to the ML decoder. This paper answers the question positively under rather general conditions. 

The selector $\hat{\eta}$ that we suggest here is defined by a two step algorithm {based on two subsamples. Using the first subsample we estimate $\beta$ by $\hat{\beta}$, in a way that provides a control on $\| \hat{\beta} - \beta\|$ with high probability. Although many methods can be used (e.g., the LASSO) we choose to consider the Square-Root SLOPE estimator, as it is adaptive to parameters $s,\sigma$ and $a$}.
At the second step, the components of $\hat{\eta}$ are obtained by thresholding of debiased estimators of the components of $\beta$ based on the preliminary estimator $\hat{\beta}$. 

We now proceed to the formal definition of this selection procedure. Split the sample $(X_{i},Y_{i})$, $i=1,\dots,n$, into two subsamples ${\mathcal D}_1$ and ${\mathcal D}_2$ with respective sizes $n_{1}$ and $n_{2}$, such that $n=n_{1} + n_{2}$. For $k=1,2,$ denote by $(X^{(k)},Y^{(k)})$ the corresponding submatrices $X^{(k)} \in \mathbb{R}^{n_{k} \times p}$ and subvectors $Y^{(k)} \in \mathbb{R}^{n_{k}}$. The Square-Root SLOPE estimator based on the first subsample $(X^{(1)},Y^{(1)})$ is defined as follows. Let $\lambda \in \mathbb{R}^{p}$
be a vector of tuning parameters
$$\lambda_{j} = A\sqrt{\frac{\log(\frac{2p}{j})}{n}}, \quad \quad j=1,\dots ,p,$$
for a constant $A>0$ large enough. For example, if $\xi$ is a standard Gaussian random vector, it suffices to take $A
>16+4\sqrt{2}$, cf. \cite{deru}. 
For any $\beta \in \mathbb{R}^{p}$, let $(\beta^{*}_{1},\dots,\beta_{p}^{*})$
be the non-increasing rearrangement of $|\beta_1|, \dots , |\beta_p|$. Consider
$$ |\beta|_{*} = \sum_{j=1}^{p} \lambda_{j}\beta^{*}_{j}, \quad \beta \in \mathbb{R}^{p},$$
which is a norm on $\mathbb{R}^{p}$, cf., e.g.,  \cite{candes}. The Square-Root SLOPE estimator is a solution of the convex minimization problem
\begin{equation}\label{def_sqr_slope} 
\hat{\beta} \in \underset{\beta \in \mathbb{R}^{p}}{\arg\min} \left( \frac{ \| Y^{(1)} - X^{(1)}\beta\|}{\sqrt{n_{1}}} + 2|\beta|_{*}\right).
\end{equation}
Note that this estimator does not depend on the parameters $s$, $\sigma$, and $a$. Details about the computational aspects and statistical properties of the Square-Root SLOPE estimator can be found in \cite{deru}.

The suggested selector is defined as a binary vector
\begin{equation}\label{eq:selec1}
    \hat{\eta}(X,Y) = \left( \hat{\eta}_{1}(X,Y),\dots,\hat{\eta}_{p}(X,Y) \right) 
\end{equation}
with components 
\begin{equation}\label{eq:selec2}
    \hat{\eta}_{i}(X,Y) = \mathbf{1}\left\{  \frac{  \left|X^{(2)\top}_{i}\left( Y^{(2)} - \sum_{j \neq i}X^{(2)}_{j}\hat{\beta}_{j}\right)\right|}{\|X_{i}^{(2)} \|} > t(X^{(2)}_{i}) \right\}
\end{equation}
for $i=1,\dots,p$, where $X^{(2)}_{i}$ denotes the $i$th column of matrix $X^{(2)}$. The threshold $t(\cdot)$ in \eqref{eq:selec2} will be defined by different expressions, with a basic prototype of the form
\begin{equation}\label{t} 
t(u) = t_{\sigma}(u) = \frac{a\|u\|}{2} + \frac{\sigma^{2}\log(\frac{p}{s}-1)}{a \|u\|}, \qquad \forall u \in \mathbb{R}^{n_2}.
\end{equation}
The selector \eqref{eq:selec1} - \eqref{eq:selec2} is the core procedure of this paper. {As explained below, this choice is motivated by a reduction that transforms the original support estimation problem to support estimation in a 
sparse mean model. The latter is solved in an optimal way by a thresholding procedure}. We show 
that the selector \eqref{eq:selec1} - \eqref{eq:selec2} improves upon known sufficient conditions of exact recovery for  methods realizable in polynomial time.  We also show that it can be turned into a completely adaptive procedure (once the sufficient conditions are fulfilled) by suitably modifying the definition \eqref{t}  of the threshold.  
Another advantage is that the selector \eqref{eq:selec1} - \eqref{eq:selec2}  can be generalized to sub-Gaussian design matrices $X$  and to heavy-tailed noise.

Section~\ref{sec:nas} is devoted to the study of non-asymptotic minimax Hamming distance risk. Specifically, Theorem~\ref{th:inf} provides a minimax lower bound for any selector, and plays a central role in this article since it is instrumental in motivating the selector  \eqref{eq:selec1} - \eqref{eq:selec2}. Theorem~\ref{thm:suf1} is the counterpart of Theorem~\ref{th:inf}, where we show that this selector is nearly optimal in a minimax sense. Both theorems involve the quantities denoted by $\psi_+$ and $\psi$, { that are specific to the minimax risk of variable selection in the normal means model}, whose behavior is somewhat complicated. In Section~\ref{sec:phase} we consider different regimes and study the behavior of these quantities, which in turn highlights the presence of interesting phase transitions.
Section~\ref{sec:adap} is devoted to adaptivity to all parameters of the setting, while in Section~\ref{sec:sub} we show how to extend all previous results to sub-Gaussian $X$and $\xi$. Finally, in Section~\ref{sec:rob}, we give a robust version of our procedure when the noise $\xi$ is heavy-tailed and the data are corrupted by arbitrary outliers.

\section{Non-asymptotic bounds on the minimax risk}\label{sec:nas}

Here, as well as in Sections \ref{sec:phase} and \ref{sec:adap}, we assume that all entries of $X$ are i.i.d. standard Gaussian random variables and the noise $\xi \sim \mathcal{N}\left(0,\mathbb{I}_{n} \right)$ is a standard Gaussian random vector  independent of $X$.

In this section, we present a non-asymptotic minimax lower bound on the Hamming risk of arbitrary selectors as well as non-asymptotic upper bounds for the two risks of the selector \eqref{eq:selec1} - \eqref{eq:selec2}. 
In several papers, lower bounds are derived using the Fano lemma in order to get necessary conditions of exact support recovery, i.e., the convergence 
of the minimax risk  to~0. However, they do not give information about the rate of convergence.  Our first aim in this section is to obtain an accurate enough lower bound characterizing the rate. The Fano lemma is too rough for this purpose and we use instead more refined techniques based on explicit Bayes risk calculation.
Set
$$ \psi_{+}\left(n,p,s,a,\sigma \right) = \left(p-s\right)\mathbf{P}\left( \sigma \varepsilon > t\left( \zeta \right) \right) + s \mathbf{P}\left( \sigma \varepsilon \ge a\|\zeta\| - t\left( \zeta \right) \right),$$
where $\varepsilon$ is a standard Gaussian random variable, $\zeta \sim \mathcal{N}\left(0,\mathbb{I}_{n}\right)$ is a standard Gaussian random vector in $\mathbb{R}^{n}$ independent of $\varepsilon$, and $t(\cdot)$ is defined in \eqref{t}. 

The following minimax lower bound holds.
\begin{theorem}\label{th:inf}
For any $a>0$, $\sigma>0$ and any integers $n,p,s$ such that $s<p$ we have 
$$ \forall s'\in(0,s], \quad \underset{\tilde{\eta}}{\inf}\underset{\beta \in \Omega^{p}_{s,a}}{\sup} \mathbf{E}_{\beta}|\tilde{\eta} - \eta_{\beta}| \geq \frac{s'}{s}\left( \psi_{+}(n,p,s,a,\sigma) - 4se^{-\frac{(s-s')^{2}}{2s}}\right), $$
where $\inf_{\tilde{\eta}}$ denotes the infimum over all selectors $\tilde{\eta}$.
\end{theorem}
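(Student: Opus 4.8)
The plan is to bound the minimax risk from below by a Bayes average over a product prior that is \emph{almost} supported on $\Omega^p_{s,a}$, and then to evaluate this average explicitly after reducing it, coordinate by coordinate, to a scalar Gaussian testing problem. I would take the prior $\pi_0$ under which $\beta_1,\dots,\beta_p$ are i.i.d.\ with $\mathbf{P}(\beta_i=a)=s'/p$ and $\mathbf{P}(\beta_i=0)=1-s'/p$. For any fixed selector $\tilde\eta$, writing $g(\beta)=\mathbf{E}_\beta|\tilde\eta-\eta_\beta|\ge 0$, one has $\sup_{\beta\in\Omega^p_{s,a}}g(\beta)\ge \mathbf{E}_{\pi_0}[g(\beta)\mathbf{1}\{\beta\in\Omega^p_{s,a}\}]$, since $\mathbf{E}_{\pi_0}[g\mathbf{1}_{\Omega^p_{s,a}}]\le(\sup_{\Omega^p_{s,a}}g)\,\pi_0(\Omega^p_{s,a})\le\sup_{\Omega^p_{s,a}}g$. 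Decomposing the Hamming loss coordinatewise and using that $|\beta_{-i}|_0\le s-1$ forces $\beta\in\Omega^p_{s,a}$ whatever $\beta_i\in\{0,a\}$ is, this gives
\[
\sup_{\beta\in\Omega^p_{s,a}}\mathbf{E}_\beta|\tilde\eta-\eta_\beta|\ \ge\ \sum_{i=1}^p\mathbf{E}_{\pi_0}\big[\mathbf{P}_\beta(\tilde\eta_i\neq\eta_{\beta,i})\,\mathbf{1}\{|\beta_{-i}|_0\le s-1\}\big],
\]
where $\beta_{-i}$ denotes the vector of coordinates other than $i$.

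Next I would decouple the coordinates by a genie argument. Revealing $\beta_{-i}$ can only lower the risk, so after taking the infimum over $\tilde\eta$ each summand is bounded below by its genie-aided Bayes error. Given $\beta_{-i}$ and $X$, the residual $Y-\sum_{j\neq i}X_j\beta_j=\beta_iX_i+\sigma\xi$ has law $\mathcal N(\beta_iX_i,\sigma^2\mathbb I_n)$, so $Z_i=X_i^\top(Y-\sum_{j\neq i}X_j\beta_j)/\|X_i\|=\beta_i\|X_i\|+\sigma\varepsilon_i$ is sufficient for $\beta_i$, with $\varepsilon_i=X_i^\top\xi/\|X_i\|$ a standard Gaussian independent of $\|X_i\|$. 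This is precisely the sparse mean model mentioned after the statement: conditionally the prior on $\beta_i$ is still Bernoulli on $\{0,a\}$ with weight $s'/p$, the Bayes test is the likelihood-ratio test, and its optimal threshold is exactly $t$ with $\log(p/s-1)$ replaced by $\log(p/s'-1)$. Since $\|X_i\|\stackrel{d}{=}\|\zeta\|$, summing the per-coordinate Bayes errors $R_{s'/p}$ and pulling out the constraint weight yields
\[
\inf_{\tilde\eta}\sup_{\beta\in\Omega^p_{s,a}}\mathbf{E}_\beta|\tilde\eta-\eta_\beta|\ \ge\ (1-\delta)\,\psi_+(n,p,s',a,\sigma),\qquad \delta=\mathbf{P}\big(\mathrm{Bin}(p-1,s'/p)\ge s\big),
\]
where $\psi_+(\cdot,s',\cdot)=pR_{s'/p}$ is the analogue of $\psi_+$ built with the $s'$-threshold.

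To reach the stated bound I would invoke two elementary facts about the per-coordinate Bayes error $R_w=\int\min\big(wf_1,(1-w)f_0\big)$, where $f_0,f_1$ are the densities of $Z_i$ under $\beta_i=0$ and $\beta_i=a$. First, $R_w/w=\int\min\big(f_1,\tfrac{1-w}{w}f_0\big)$ is non-increasing in $w$, whence $\psi_+(\cdot,s',\cdot)=pR_{s'/p}\ge\tfrac{s'}{s}\,pR_{s/p}=\tfrac{s'}{s}\,\psi_+$. Second, $R_w\le w$ gives $\psi_+(\cdot,s',\cdot)=pR_{s'/p}\le s'$. A Chernoff bound for the binomial upper tail gives $\delta\le e^{-(s-s')^2/(2s)}$. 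Combining these,
\[
(1-\delta)\,\psi_+(\cdot,s',\cdot)=\psi_+(\cdot,s',\cdot)-\delta\,\psi_+(\cdot,s',\cdot)\ \ge\ \tfrac{s'}{s}\,\psi_+-s'\,e^{-(s-s')^2/(2s)}\ \ge\ \tfrac{s'}{s}\Big(\psi_+-4s\,e^{-(s-s')^2/(2s)}\Big),
\]
which is the claim; the factor $4$ leaves room for the precise Chernoff constant and for the boundary term $|\beta_{-i}|_0\le s-1$ versus $<s$.

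The conceptual core is the genie reduction to the scalar mean model, but the delicate part I expect to be the \emph{constraint bookkeeping}: one must show that restricting the prior to $\Omega^p_{s,a}$ costs only the multiplicative factor $s'/s$ together with the additive concentration penalty, with nothing lost in coupling the coordinatewise decisions. This is exactly where a prior of intensity $s'$ is made to stand in for the target intensity $s$ through the monotonicity of $w\mapsto R_w/w$, and where the two bounds $R_w\le w$ and $\delta\le e^{-(s-s')^2/(2s)}$ enter. Choosing $s'$ close to $s$ but with $s-s'\gg\sqrt s$ then makes the penalty negligible while keeping $s'/s\to1$, which is what is needed downstream in Theorem~\ref{thm:suf1}.
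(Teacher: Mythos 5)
Your proof is correct, and its conceptual core coincides with the paper's: the coordinate-wise genie reduction to testing $\beta_i=0$ against $\beta_i=a$ based on the sufficient statistic $X_i^{\top}\bigl(Y-\sum_{j\neq i}X_j\beta_j\bigr)/\|X_i\|$, the Neyman--Pearson threshold $t(s',\cdot)$ with $\log(p/s'-1)$, and monotonicity in the prior weight to pass from intensity $s'$ to $s$. Where you genuinely diverge is the constraint bookkeeping, which the paper outsources to two results cited from \cite{butucea}: Theorem~\ref{thm:indep_prior}, which bounds the minimax risk over the sparse class by the \emph{unrestricted} product-prior Bayes risk over $[0,1]^p$-valued estimators minus the additive penalty $4s'e^{-(s-s')^2/(2s)}$, and the fact that $u\mapsto\psi_+(n,p,u,a,\sigma)/u$ is decreasing. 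You instead truncate the prior directly via $\sup_{\Omega^p_{s,a}}g\ge\mathbf{E}_{\pi_0}[g\mathbf{1}_{\Omega^p_{s,a}}]$, weaken the indicator coordinatewise to $\mathbf{1}\{|\beta_{-i}|_0\le s-1\}$ so that it depends only on $\beta_{-i}$ and factors out of the genie-aided Bayes error, and pay a multiplicative $(1-\delta)$ penalty with $\delta$ a binomial tail controlled by Bernstein--Chernoff; you also prove the monotonicity from scratch through the representation $R_w/w=\int\min\bigl(f_1,\tfrac{1-w}{w}f_0\bigr)$, together with $R_w\le w$. All of these steps are sound (in particular the direction of the inf/expectation interchange in the genie step, and the verification that $\mathbf{P}(\mathrm{Bin}(p-1,s'/p)\ge s)\le e^{-(s-s')^2/(2s)}$, both check out), and your version is fully self-contained where the paper's is not; it even yields a marginally sharper constant, since $\delta\,pR_{s'/p}\le s'e^{-(s-s')^2/(2s)}$ gives the bound with $1$ in place of $4$ in front of $se^{-(s-s')^2/(2s)}$. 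What the paper's route buys is brevity and reliance on a lemma stated for arbitrary families $\{\mathbf{P}_\eta\}$ rather than this specific Gaussian design; what yours buys is independence from that external result and the cleaner multiplicative form of the sparsity penalty.
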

The proof of this theorem is given in the Appendix.
It relies on a reduction to {the normal means model} that allows us to bound from below the component-wise Bayes risk. Achieving the minimal value of the risk for each component leads to an equivalent of the oracle (non-realizable) selector $\eta^{*}$ with components
\begin{equation}\label{eq:selec3}
    \eta^{*}_{i}(X,Y) = \mathbf{1}\left\{ \frac{ X^{\top}_{i}\left( Y - \sum_{j \neq i}X_{j}\beta_{j}\right)}{\|X_{i} \|} > t(X_{i}) \right\}, \quad i=1,\dots,p,
\end{equation}
where $t(\cdot)$ is the threshold defined in \eqref{t}.
{Clearly, the oracle selector $\eta^*$ is not realizable since it depends on the unknown $\beta$. We do not know the rest of the components of $\beta$ when we try to recover its $i$th component.} This oracle selector has a structure similar to \eqref{eq:selec2}. It selects the components by thresholding  the random variables  
\begin{equation}\label{eq:selec3a}
\frac{ X_{i}^{\top}\left(Y - \sum_{j\neq i}^{}X_{j}\beta_{j}\right)}{\|X_{i}\|^{2}}.  
\end{equation}
This motivates the method that we are proposing.
Note that, under the model \eqref{eq:def}, the random variable \eqref{eq:selec3a} has the same distribution as
$$ \beta_{i} + \sigma \frac{\varepsilon_i}{\|X_{i}\|}, $$
where $\varepsilon_i$ is a standard Gaussian random variable independent of $\|X_{i}\|$.  In simple words, the random 
variable \eqref{eq:selec3a} can be viewed as the value of $\beta_{i}$ plus noise, so that 
thresholding this random variable is a meaningful way to proceed for selection. Moreover, conditionally on the design $X$, we are in the framework of variable selection in the normal means model, where the techniques developed in \cite{butucea} can be 
applied to obtain sharp lower bounds on the risks.

Since the sensing matrix $X$ is assumed Gaussian with i.i.d. entries, it is straightforward to see that $ \sum_{j\neq i}^{}X_{j}\beta_{j} $ is a zero-mean Gaussian random variable with variance not greater than $\| \beta \|^{2}$. Hence we can consider this term as an additive noise, but the fact that we cannot control $\| \beta \|$ means that the variance of the noise is also not controlled. In order to get around this drawback, we plug in an estimator $\hat\beta$ instead of $\beta$ in the oracle expression. This motivates the two-step selector defined in \eqref{eq:selec1} - \eqref{eq:selec2}. At the first step, we use the Square-Root SLOPE estimator $\hat\beta$  based on the subsample~${\mathcal D}_1$. 
We have the following bound on the $\ell_2$ error of the Square-Root SLOPE estimator.

\begin{prop}\label{prop:1}
Let $\hat\beta$ be the Square-Root SLOPE estimator  defined in Section \ref{sec:contr} with constant $A\geq 16 +4\sqrt{2}$. There exist positive constants $C_{0}, C_1$ and $C_2$ such that for all $\delta \in (0,1]$ and $n_1 > \frac{C_{0}}{\delta^2}s\log{\left(\frac{ep}{s}\right)}$ we have
$$ \underset{|\beta|_{0}\leq s}{\sup} \mathbf{P}_{\beta}\left( \|\hat{\beta} - \beta\| \geq  \delta \sigma \right) \leq C_1 \Big(\frac{s}{2p}\Big)^{C_2s}.$$
\end{prop}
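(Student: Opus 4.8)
The plan is to combine two ingredients: the oracle inequality for the Square-Root SLOPE estimator established in \cite{deru}, which holds on a favourable event depending on the design and the noise, and a probabilistic control of that event when $X^{(1)}$ and $\xi$ are i.i.d.\ standard Gaussian. Since the quantity to be bounded is $\sup_{|\beta|_0\le s}\mathbf{P}_\beta(\|\hat\beta-\beta\|\ge\delta\sigma)$, the key structural observation is that the good event can be chosen free of $\beta$, so that the supremum over sparse $\beta$ reduces to bounding the probability of a single event.

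First I would recall from \cite{deru} that, on the event $\mathcal{G}$ on which (i) the submatrix $X^{(1)}$ satisfies the weighted restricted eigenvalue (WRE) condition with some constant $\kappa>0$, and (ii) the tuning vector $\lambda$ dominates the noise gradient $X^{(1)\top}\xi^{(1)}/\|\xi^{(1)}\|$, the estimator defined in \eqref{def_sqr_slope} obeys an oracle inequality of the form
\[
\|\hat\beta-\beta\| \le \frac{C\sigma}{\kappa}\sqrt{\frac{s\log(2p/s)}{n_1}}, \qquad |\beta|_0\le s .
\]
It is precisely the requirement $A\ge 16+4\sqrt{2}$ that guarantees, for standard Gaussian $\xi$, the noise-domination condition (ii) with high probability; and, crucially, because the $\lambda_j$ do not involve $\sigma$, the inequality holds with the same $\lambda$ uniformly over all noise levels, which is what the pivotality (and ultimately the adaptivity) of the method requires. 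Since the residual at the true parameter equals $\sigma\xi^{(1)}$, both defining conditions of $\mathcal{G}$ are free of $\beta$.

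The core probabilistic step is to show that $\mathbf{P}_\beta(\mathcal{G}^c)\le C_1(s/2p)^{C_2 s}$. For the WRE part I would fix a support $T$ of cardinality $s$, use Gaussian concentration of the smallest restricted singular value of $X^{(1)}$ to bound the probability that the condition fails on $T$ by $e^{-cn_1}$ (with $\kappa$ bounded below by an absolute constant on the complementary event), and then take a union bound over the $\binom{p}{s}\le(ep/s)^s$ choices of $T$, giving a failure probability at most $(ep/s)^s e^{-cn_1}$. The choice $A\ge16+4\sqrt2$ yields, as in \cite{deru}, a failure probability for (ii) of at most the same order. This is where the hypothesis $n_1>\frac{C_0}{\delta^2}s\log(ep/s)$ enters: with $\delta\le1$ and $C_0$ large enough, $cn_1$ exceeds $(1+C_2)s\log(ep/s)$, so that $(ep/s)^s e^{-cn_1}\le e^{-C_2 s\log(ep/s)}\le C_1(s/2p)^{C_2 s}$ after adjusting constants (using $\log(2p/s)\le\log(ep/s)$).

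Finally, on $\mathcal{G}$ the oracle inequality together with the lower bound on $n_1$ gives
\[
\|\hat\beta-\beta\| \le \frac{C\sigma}{\kappa}\sqrt{\frac{s\log(ep/s)}{n_1}} \le \frac{C\sigma}{\kappa}\cdot\frac{\delta}{\sqrt{C_0}} \le \delta\sigma
\]
as soon as $C_0\ge(C/\kappa)^2$, and passing to the complement of $\mathcal{G}$ yields the claim. I expect the main obstacle to be the interaction of the first two steps: matching the failure probability of the Gaussian WRE condition to the specific exponential form $(s/2p)^{C_2 s}$ requires carefully trading off the $\binom{p}{s}$ union-bound factor against the exponential-in-$n_1$ concentration, and one must verify that the WRE constant $\kappa$ can be taken bounded away from $0$ uniformly over supports, so that it does not inflate the threshold on $n_1$.
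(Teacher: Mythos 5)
Your high-level strategy coincides with the paper's: a $\beta$-free favourable event (a restricted-eigenvalue condition on the design plus control of the noise term), a deterministic oracle inequality for the Square-Root SLOPE on that event, and an accounting step showing the complement has probability at most $C_1(s/2p)^{C_2 s}$. The paper packages this differently: Proposition~\ref{prop:1} is obtained as a special case of the sub-Gaussian Proposition~\ref{prop:subGaussian}, whose proof takes the event $\mathbb{A}=\{X^{(1)} \text{ satisfies } WRE(s,20)\}$, bounds $\mathbf{P}(\mathbb{A}^c)\le 3e^{-C's\log(2p/s)}$ by Theorem 8.3 of \cite{bellec}, and bounds the error on $\mathbb{A}$ by Proposition 4 of \cite{collier2018}, which absorbs the noise event; your direct Gaussian route through the oracle inequality of \cite{deru} is a legitimate repackaging of the same ingredients. (A minor omission: for a square-root estimator the oracle inequality is naturally stated in units of $\|\xi^{(1)}\|/\sqrt{n_1}$ rather than $\sigma$, so the good event also needs the chi-square concentration $\|\xi^{(1)}\|\le 2\sigma\sqrt{n_1}$, a harmless additional $e^{-cn_1}$ term.)

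The genuine gap is in your probabilistic treatment of the WRE condition. The $WRE(s,c_0)$ condition is an infimum of $\|X^{(1)}u\|/(\sqrt{n_1}\|u\|)$ over a cone defined through the SLOPE norm, roughly $\{u\neq 0: |u|_*\le c\,(\sum_{j\le s}\lambda_j^2)^{1/2}\|u\|\}$, and this cone contains vectors of \emph{full} support. Fixing a support $T$ with $|T|=s$, using concentration of the smallest singular value of the $n_1\times s$ submatrix, and union-bounding over the $\binom{p}{s}$ supports controls only the sparse restricted eigenvalue $\inf_{|u|_0\le s}\|X^{(1)}u\|/(\sqrt{n_1}\|u\|)$, not the infimum over the cone; so, as written, this step does not prove WRE. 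Passing from sparse eigenvalues to the cone is precisely the nontrivial part — for Gaussian designs via Gordon's minimax theorem and the Gaussian width of the cone, for sub-Gaussian designs via Mendelson's small-ball method, or via Maurey-type sparse-decomposition arguments — and it is exactly what Theorem 8.3 of \cite{bellec}, invoked by the paper, supplies. Note also that your closing caveat misdiagnoses the risk: uniformity of $\kappa$ over supports is handled by your union bound; what the union bound misses entirely is the dense vectors in the cone. Once this step is replaced by the citation (or by a correct bridging argument), the rest of your proof — the pivotality of $\lambda$, the $\beta$-free good event, and the arithmetic converting $n_1>C_0\delta^{-2}s\log(ep/s)$ into both $\|\hat\beta-\beta\|\le\delta\sigma$ and $(ep/s)^s e^{-cn_1}\le C_1(s/2p)^{C_2 s}$ — goes through.
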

This proposition is a special case of Proposition \ref{prop:subGaussian} below. 
The fact that it is enough to take $A
>16+4\sqrt{2}$ when $\xi$ is a standard Gaussian random vector is proved in \cite{deru}.

In what follows, for the sake of readability, we will write $X$ and $Y$ instead of $X^{(2)}$ and $Y^{(2)}$ since we will condition on the first subsample~${\mathcal D}_1$ and only use the second subsample~${\mathcal D}_2$ in our argument. We only need  to remember that $\hat{\beta}$ is independent from the second sample of size $n_{2}$. With this convention, definition \eqref{eq:selec2} involves now the random variables
\begin{equation}\label{alphai0} \alpha_{i}:=\frac{ X_{i}^{\top}\left(Y - \sum_{j\neq i} X_{j}\hat{\beta}_{j}\right)}{\|X_{i}\|} = \beta_{i}\|X_{i}\| + \frac{1}{\|X_{i}\|}X_{i}^{\top}\Big( \sum_{j\neq i}X_{j}(\beta_{j} - \hat{\beta}_{j})+\sigma \xi \Big)
\end{equation}
for $i=1,\dots,p$. Conditionally on $\hat{\beta}$ and $X_i$, the variable $\alpha_{i}$ has the same distribution as 
\begin{equation}\label{alphai} 
\beta_{i} \|X_{i}\| + \Big(\sigma^{2} + \sum_{j\neq i}|\beta_{j} - \hat{\beta_{j}}|^{2}\Big)^{\frac{1}{2}} \varepsilon,
\end{equation}
  where $\varepsilon$ is a standard Gaussian random variable. Hence, considering $\alpha_{i}$ as new observations, we have a conditional normal means model, for which a natural procedure to detect the non-zero components
consists in comparing $\alpha_{i}$ to a threshold. Choosing the same threshold $t(\cdot)$  as in the lower bound of Theorem \ref{th:inf} leads to the selector \eqref{eq:selec1} - \eqref{eq:selec2}.

Consider now a quantity close to $\psi_{+}$ given by the formula
$$ \psi\left(n,p,s,a,\sigma \right) = \left(p-s\right)\mathbf{P}\left(  \sigma \varepsilon > t\left(\zeta \right) \right) + s \mathbf{P}\left( \sigma  \varepsilon > \left(a\|\zeta\| - t\left(\zeta \right)\right)_{+} \right)$$
where $t(u) = t_{\sigma}(u)$ is defined in \eqref{t}.
Note that
$$\psi\left(n,p,s,a,\sigma \right) \leq \psi_{+}\left(n,p,s,a,\sigma \right).$$ We have the following upper bound for the minimax risks of the selector \eqref{eq:selec1} - \eqref{eq:selec2}.

\begin{theorem}\label{thm:suf1}
{Let the assumptions of Proposition \ref{prop:1} be satisfied.} Let $s$ be an integer such that $s\leq p/2$, and let $\hat{\eta}$ be the selector \eqref{eq:selec1} - \eqref{eq:selec2} with the threshold $t(\cdot)=t_{\sigma\sqrt{1+\delta^2}}(\cdot)$ defined in \eqref{t}, with some  $\delta \in (0,1]$.
{{Let the positive constants $C_{0}, C_1$ and $C_2$ be the same as in Proposition \ref{prop:1}} }. For all $n_1 > \frac{C_{0}}{\delta^2}s\log{\left(\frac{ep}{s}\right)}$ we have
$$   \underset{\beta \in \Omega^{p}_{s,a}}{\sup} \mathbf{E}_{\beta}|\hat{\eta} - \eta_{\beta}| \leq 2 \psi(n_{2},p,s,a,\sigma\sqrt{1+\delta^{2}}) + C_1 (s/2)^{C_2s} p^{1-C_2s},$$
and
$$ \underset{\beta \in \Omega^{p}_{s,a}}{\sup}\mathbf{P}_{\beta}\left( \hat{\eta} \neq \eta_{\beta}\right) \leq 2 \psi(n_{2},p,s,a,\sigma\sqrt{1+\delta^{2}}) + C_1(s/2)^{C_2s} p^{-C_2s}.  $$
\end{theorem}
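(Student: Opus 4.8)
The plan is to write the Hamming loss as a sum of coordinate indicators, $|\hat\eta-\eta_\beta|=\sum_{i=1}^{p}\mathbf{1}\{\hat\eta_i\neq\eta_{\beta,i}\}$, so that $\mathbf{E}_\beta|\hat\eta-\eta_\beta|=\sum_i\mathbf{P}_\beta(\hat\eta_i\neq\eta_{\beta,i})$, and to control each term by conditioning on the preliminary estimator $\hat\beta$. Since $\hat\beta$ is built from $\mathcal{D}_1$ and is independent of the second subsample, Proposition \ref{prop:1} gives that the event $\mathcal{A}=\{\|\hat\beta-\beta\|\le\delta\sigma\}$ has probability at least $1-C_1(s/2p)^{C_2 s}$. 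On $\mathcal{A}^c$ I bound each of the $p$ indicators trivially by $1$, contributing at most $p\,\mathbf{P}(\mathcal{A}^c)\le C_1(s/2)^{C_2 s}p^{1-C_2 s}$ to the Hamming risk, which is exactly the residual term. The work is therefore to bound the restriction of the risk to $\mathcal{A}$, for the selector with threshold $t=t_{\tilde\sigma}$, $\tilde\sigma=\sigma\sqrt{1+\delta^2}$.

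On $\mathcal{A}$ I use the representation \eqref{alphai0}--\eqref{alphai}: conditionally on $\hat\beta$ and the $i$-th column $X_i$ (standard Gaussian, distributed as $\zeta\sim\mathcal N(0,\mathbb{I}_{n_2})$), the statistic $\alpha_i$ has the law of $\beta_i\|X_i\|+\hat v\,\varepsilon$, where $\hat v^2=\sigma^2+\sum_{j\neq i}(\beta_j-\hat\beta_j)^2\le\sigma^2(1+\delta^2)=\tilde\sigma^2$ on $\mathcal{A}$. For a null coordinate ($\beta_i=0$, a potential false positive) the error probability is $\mathbf{P}(\hat v|\varepsilon|>t(X_i))=2\mathbf{P}(\hat v\varepsilon>t(X_i))\le 2\mathbf{P}(\tilde\sigma\varepsilon>t(X_i))$, using $t(X_i)>0$ and monotonicity of the Gaussian tail in the variance together with $\hat v\le\tilde\sigma$; integrating over $X_i\sim\zeta$ bounds it by $2P_1$ with $P_1=\mathbf{P}(\tilde\sigma\varepsilon>t(\zeta))$. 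For a non-null coordinate ($|\beta_i|\ge a$, a potential false negative) I drop the lower tail, $\mathbf{P}(|\alpha_i|\le t(X_i))\le\mathbf{P}(\hat v\varepsilon\ge|\beta_i|\|X_i\|-t(X_i))$, and replace $|\beta_i|$ by $a$. The positive part in $\psi$ appears precisely here: when $a\|X_i\|-t(X_i)\ge0$ variance monotonicity gives $\mathbf{P}(\tilde\sigma\varepsilon>a\|X_i\|-t(X_i))$, while when $a\|X_i\|-t(X_i)<0$ the bound $2\mathbf{P}(\tilde\sigma\varepsilon>0)=1$ is trivially valid, so in both cases the term is at most $2\mathbf{P}(\tilde\sigma\varepsilon>(a\|X_i\|-t(X_i))_+)$, integrating to $2P_2$ with $P_2=\mathbf{P}(\tilde\sigma\varepsilon>(a\|\zeta\|-t(\zeta))_+)$.

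Summing over coordinates, with $s_0=|S_\beta|\le s$, the contribution on $\mathcal{A}$ is at most $2[(p-s_0)P_1+s_0P_2]$. Since $2\psi(n_2,p,s,a,\tilde\sigma)-2[(p-s_0)P_1+s_0P_2]=2(s-s_0)(P_2-P_1)$, it suffices to show $P_1\le P_2$, so that the worst case is the full-support case $s_0=s$. This is where the shape of the threshold is used: conditionally on $\zeta$, $P_1\le P_2$ follows from $t(\zeta)\ge(a\|\zeta\|-t(\zeta))_+$, and the latter holds because $t(u)\ge a\|u\|/2$ (hence $2t(u)\ge a\|u\|$), where $s\le p/2$ guarantees $\log(\tfrac{p}{s}-1)\ge0$ and thus positivity of the second term of $t$ in \eqref{t}. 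This yields the Hamming bound. The probability bound follows from the same per-coordinate estimates combined with a union bound, $\mathbf{P}_\beta(\hat\eta\neq\eta_\beta)\le\mathbf{P}(\mathcal{A}^c)+\sum_i\mathbf{P}_\beta(\{\hat\eta_i\neq\eta_{\beta,i}\}\cap\mathcal{A})\le C_1(s/2)^{C_2 s}p^{-C_2 s}+2\psi(n_2,p,s,a,\tilde\sigma)$.

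The step I expect to be most delicate is the bookkeeping of the two-sided threshold and the positive part: securing the factor $2$ (and no more) in front of $\psi$, passing from $\hat v$ to $\tilde\sigma$ by variance monotonicity only when the relevant mean is nonnegative, and establishing $P_1\le P_2$ through $t(u)\ge a\|u\|/2$. The measure-theoretic point that $\hat\beta$ is independent of $\mathcal{D}_2$ — so that on $\mathcal{A}$ the conditional law \eqref{alphai} holds with $\hat v\le\tilde\sigma$ deterministically — also has to be invoked with care.
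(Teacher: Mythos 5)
Your proof is correct and takes essentially the same route as the paper's: condition on the first subsample, use the conditional Gaussian representation of $\alpha_i$, bound false positives and false negatives by two-sided tails with variance at most $\sigma^2(1+\delta^2)$ on the event $\{\|\hat\beta-\beta\|\le\delta\sigma\}$, and control the complementary event via Proposition \ref{prop:1}, with a union bound replacing the coordinate sum for the probability risk. Your explicit verification that $\mathbf{P}\left(\tilde\sigma\varepsilon>t(\zeta)\right)\le\mathbf{P}\left(\tilde\sigma\varepsilon>(a\|\zeta\|-t(\zeta))_+\right)$, so that the worst case is a support of full size $s$, makes rigorous a step that the paper's display \eqref{15} passes over silently.
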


\begin{proof}
 Define the random event
$ \mathbb{A} = \{ \|\hat{\beta}-\beta \| \leq \delta \sigma  \} $, where $\hat{\beta}$ is based on the subsample ${\mathcal D}_1$. 
For any $\beta \in \Omega^{p}_{s,a}$, we have 
\begin{eqnarray*} \mathbf{E}_{\beta}\left[|\hat\eta - \eta_{\beta}| \ | {\mathcal D}_1 \right] &=& \sum_{i:\beta_{i}=0}\mathbf{E}_{\beta}\left[\hat\eta_{i} | {\mathcal D}_1 \right] +
\sum_{i:\beta_{i}\neq 0}\mathbf{E}_{\beta}\left[1-\hat\eta_{i} | {\mathcal D}_1 \right]
\\
&= &
\sum_{i:\beta_{i}=0}\mathbf{P}_{\beta}(|\alpha_{i}|>t(X_i)| {\mathcal D}_1 ) +
\sum_{i:\beta_{i}\neq 0}\mathbf{P}_{\beta}(|\alpha_{i}|\le t(X_i)| {\mathcal D}_1 ). 
\end{eqnarray*}
Here, $t(X_i)\ge 0$ since $s\leq p/2$. Using the fact that, conditionally on $\hat{\beta}$ and $X_i$, the variable $\alpha_{i}$ has the same distribution as \eqref{alphai} we find that, for all $i$ such that $\beta_i=0$,
$$
\mathbf{P}_{\beta}(|\alpha_{i}|>t(X_i)| {\mathcal D}_1)\le \mathbf{P}(\sigma_*|\varepsilon| >t(X_i)| {\mathcal D}_1)=2\mathbf{P}(\sigma_*\varepsilon >t(X_i)| {\mathcal D}_1)
$$
where $\sigma_*= (\sigma^{2}+ \|\hat{\beta}-\beta\|^{2})^{1/2}$ and $\varepsilon$ is a standard Gaussian random variable independent of $\|X_i\|$. An analogous argument and the fact that $|\beta_i|\ge a$ for all non-zero $\beta_i$ lead to the bound
$$
\mathbf{P}_{\beta}(|\alpha_{i}|\le t(X_i)| {\mathcal D}_1)\le \mathbf{P}(\sigma_*|\varepsilon| \ge a\|X_i\| - t(X_i)| {\mathcal D}_1)=2\mathbf{P}(\sigma_*\varepsilon \ge (a\|X_i\| - t(X_i))_+| {\mathcal D}_1)
$$
valid for all $i$ such that $\beta_i\ne 0$, where we have used the fact that $|\alpha_{i}|\geq a\|X_{i}\| - |\alpha_{i} - \beta\|X_{i}\||$. Therefore, 
\begin{equation}\label{15} 
\mathbf{E}_{\beta}\left[|\hat\eta - \eta_{\beta}| \ | {\mathcal D}_1 \right]\le 2(p-s)\mathbf{P}(\sigma_*\varepsilon >t(\zeta)| {\mathcal D}_1) + 2s\mathbf{P}(\sigma_*\varepsilon \ge (a\|\zeta\| - t(\zeta))_+| {\mathcal D}_1),
\end{equation}
where $\zeta \sim \mathcal{N}\left(0,\mathbb{I}_{n_{2}}\right)$ is a standard Gaussian random vector in $\mathbb{R}^{n_{2}}$ independent of $\varepsilon$. { Note that the right hand side of \eqref{15} is equal to $2\psi(n_{2},p,s,a,\sigma^{*})$ where $\sigma^{*}$ is random and depends on $\mathcal{D}_{1}$}. Using this bound on the event $\mathbb{A}$ and taking expectations with respect to ${\mathcal D}_1$ yields
$$\mathbf{E}_{\beta}|\hat\eta - \eta_{\beta}| \leq 2 \psi(n_{2},p,s,a,\sigma\sqrt{1+\delta^{2}}) + 2p \mathbb{P}\left(\mathbb{A}^{c}\right), $$
{where we have taken into account that $2\psi(n_{2},p,s,a,\sigma^{*}) \leq 2p$.}
For $\mathbf{P}_{\beta}\left( \hat{\eta} \neq \eta_{\beta}\right)$, we get an analogous result using a union bound on the event $\mathbb{A}$. The factor $p$ in the second term disappears in this case. %
The theorem follows by applying Proposition \ref{prop:1}.
\end{proof}

\begin{rmk}
As we will see in the next section, the term $p\big(\frac{s}{2p}\big)^{C_2s}$ is small compared to $\psi$ for large $p$. Hence, $\psi$, or the close quantity $\psi_+$, characterize the main term of the optimal rate of convergence. Uniformly on $\Omega^{p}_{s,a}$, no selector can reach a better rate of the minimax risk in the asymptotics. The discrepancy between the upper and lower bounds comes from increasing the sample size by $n_{1}$, in order to estimate $\beta$ (in the upper bound, the first argument of $\psi_+$ is the smaller sample size $n_2<n$, which makes $\psi_+$ greater), and a higher variance $\sigma^{2}(1+\delta^{2})$, even if we can make it very close to $\sigma^{2}$ by choosing $\delta$.   
\end{rmk}

\begin{rmk} Our choice of Square-Root SLOPE estimator $\hat \beta$ is motivated by the fact that it achieves the optimal rate of $\ell_2$ estimation adaptively to $s$ and $\sigma$, which will be useful in Section \ref{sec:adap}. Since  in this section  we do not consider adaptivity issues, we can also use as $\hat \beta$ the LASSO estimator with regularization parameter depending on both $s$ and $\sigma$ or the SLOPE estimator, for which the regularization parameter depends $\sigma$ but not on $s$. Indeed, it 
follows from \cite{bellec} that the conclusion of Proposition \ref{prop:1} holds when $\hat \beta$ is such a LASSO or a SLOPE estimator. Thus, Theorem \ref{thm:suf1} remains valid for these two estimators as well. 
\end{rmk}

\begin{rmk}
{
The sample splitting in our procedure grants independence between the two steps. In practice, sample splitting can be avoided through aggregation or iterative algorithms. Theoretical properties of such alternatives fall beyond the scope of this paper. 
}
\end{rmk}

The values $\alpha_i$ can be viewed as "de-biased" observations in high-dimensional regression.  Other de-biasing schemes can be used, for example, the method considered in Section \ref{sec:rob}.  The most popular de-biasing technique is based on the LASSO. In our context, applying it would mean to replace $\alpha_i$ defined in \eqref{alphai0} by the components $\hat{\beta}^{d}_{i}$
of the vector
$$
\hat{\beta}^{d}= \hat\beta^{L} + \frac{1}{n} X^{\top}\left(Y- X\hat\beta^{L}\right), 
$$
where $\hat\beta^{L}$ is the LASSO estimator (see, for example,
   \cite{Adel} and the references therein).  As in our case, this reduces the initial regression model to the mean estimation model (conditionally on $\hat\beta^{L}$), which is not exactly the normal means model but rather its approximation. Indeed, we may equivalently write
 $$  \hat{\beta}^{d}_{i} = \beta_{i}  + \frac{X_{i}^{\top}}{n}\Big( \sum_{j\neq i}X_{j}(\beta_{j} - \hat{\beta}^{L}_{j})+\sigma \xi \Big)+ \left( 1-\frac{\|X_{i} \|^{2}}{n}\right)( \hat\beta^{L}_{i}-\beta_{i}). $$
The difference from \eqref{alphai0} is in the fact that, conditionally on $\hat\beta^{L}$ and $X_i$,  we have here a bias $\Big( 1-\frac{\|X_{i} \|^{2}}{n}\Big)( \hat\beta^{L}_{i}-\beta_{i})$, and that there is no scaling by the norm of $X_{i}$. 
Note that scaling by the norm $\|X_i\|$ instead of $n$ is crucial in our construction. It allows us to 
obtain in Theorem~\ref{thm:suf1}  the expression for the risk analogous to the lower bound of Theorem~\ref{th:inf}. 

Finally, note that in parallel to our work, a study of a specific type of two-stage algorithms for variable selection in linear models is developed in  \cite{wang17}.  
{The method proposed  in \cite{wang17} consists in estimation through a bridge estimator, 
followed by a thresholding step. The results in \cite{wang17} deal with 
high-dimensional asymptotic setting where the number of observations $n$ grows at the
same rate as the number of predictors $p$, while in the present work  
we develop a non-asymptotic analysis without such a restriction on $n$ and $p$. The 
main aim in \cite{wang17} is to compare variable selection accuracy for different bridge 
estimators used in the first step. Our results and the questions that we address here are significantly different 
since we are interested in necessary and sufficient conditions for variable selection considering
minimax optimality among all possible selectors. }

\section{Phase transition}\label{sec:phase}

Using the upper and lower bounds of Section \ref{sec:nas}, we can now study the phase transition, i.e., the necessary and sufficient conditions on the sample size to achieve exact recovery under the Hamming risk. A first lower bound is given by the following result.
\begin{prop}\label{prop:nec1}
Let $s \geq 6$ and $n \leq \frac{2\sigma^{2}\log\left(\frac{p}{s}-1\right)}{a^{2}} $. There exists an absolute constant $c>0$ such that
$$   \underset{\tilde{\eta}}{\inf}\underset{\beta \in \Omega^{p}_{s,a}}{\sup}\mathbf{E}_{\beta}\left|\tilde{\eta} - \eta_{\beta} \right| \geq \left( c \vee \frac{s}{8}\left( 1 - 16e^{-\frac{s}{8}}\right) \right) ,$$
where $\inf_{\tilde{\eta}}$ denotes the infimum over all selectors $\tilde{\eta}$.
\end{prop}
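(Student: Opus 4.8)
The plan is to combine the general minimax lower bound of Theorem~\ref{th:inf} with a lower estimate of the quantity $\psi_+$ that exploits the hypothesis $n \le \frac{2\sigma^2\log(\frac{p}{s}-1)}{a^2}$. The whole argument reduces to showing $\psi_+(n,p,s,a,\sigma)\ge s/4$, after which two well-chosen values of $s'$ in Theorem~\ref{th:inf} produce the two terms inside the maximum.

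First I would bound the second summand of $\psi_+$ from below. Writing $L=\log(\frac{p}{s}-1)$ and recalling $t(\zeta)=\frac{a\|\zeta\|}{2}+\frac{\sigma^2 L}{a\|\zeta\|}$, the relevant event is
$$\sigma\varepsilon \ge a\|\zeta\|-t(\zeta)=\frac{a\|\zeta\|}{2}-\frac{\sigma^2 L}{a\|\zeta\|}.$$
The map $r\mapsto \frac{ar}{2}-\frac{\sigma^2 L}{ar}$ is increasing in $r>0$, so on the event $\{\|\zeta\|^2\le n\}$ its value is at most $\frac{a\sqrt n}{2}-\frac{\sigma^2 L}{a\sqrt n}$, and this last quantity is $\le 0$ precisely because the hypothesis reads $a^2 n\le 2\sigma^2 L$. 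Since $\varepsilon$ is independent of $\zeta$ and $\mathbf{P}(\|\zeta\|^2\le n)\ge 1/2$ (the median of $\chi^2_n$ does not exceed its mean $n$), I obtain
$$\mathbf{P}\!\left(\sigma\varepsilon\ge a\|\zeta\|-t(\zeta)\right)\ge \mathbf{P}(\varepsilon\ge 0)\,\mathbf{P}(\|\zeta\|^2\le n)\ge \frac14,$$
and therefore, dropping the nonnegative first summand, $\psi_+(n,p,s,a,\sigma)\ge s/4$.

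With this estimate I would invoke Theorem~\ref{th:inf}. Taking $s'=s/2$ gives $(s-s')^2/(2s)=s/8$ and $s'/s=1/2$, hence
$$\inf_{\tilde\eta}\sup_{\beta\in\Omega^{p}_{s,a}}\mathbf{E}_\beta|\tilde\eta-\eta_\beta|\ge \frac12\Big(\frac s4-4s\,e^{-s/8}\Big)=\frac s8\big(1-16\,e^{-s/8}\big),$$
which is the second term of the maximum. For the absolute constant $c$, I would instead pick $s'=s-v\sqrt s$ with a fixed $v$ satisfying $\sqrt{8\log 2}<v<\sqrt 6$; then $(s-s')^2/(2s)=v^2/2$, the factor $\frac14-4e^{-v^2/2}$ is strictly positive, and Theorem~\ref{th:inf} yields
$$\inf_{\tilde\eta}\sup_{\beta\in\Omega^{p}_{s,a}}\mathbf{E}_\beta|\tilde\eta-\eta_\beta|\ge (s-v\sqrt s)\Big(\frac14-4e^{-v^2/2}\Big).$$
For $s\ge 6$ we have $s-v\sqrt s>0$ (using $v<\sqrt 6$) and the right-hand side is nondecreasing in $s$, so its infimum over integers $s\ge 6$ is a strictly positive absolute constant $c$. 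Combining the two displays gives the stated maximum.

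The main obstacle is the lower bound $\psi_+\ge s/4$: the crux is recognizing that the detection threshold $a\|\zeta\|-t(\zeta)$ becomes nonpositive on a set of $\zeta$ of probability at least $1/2$ exactly under the assumed regime $n\le 2\sigma^2\log(\frac ps-1)/a^2$, so that at least a constant fraction of the nonzero coordinates cannot be distinguished from zero. A secondary point, which is where the hypothesis $s\ge 6$ enters, is that an admissible $s'\in(0,s]$ making the correction term $4s\,e^{-(s-s')^2/(2s)}$ smaller than $s/4$ exists only when $s-s'>\sqrt{2s\log 16}$ is compatible with $s'>0$, i.e. when $s>8\log 2\approx 5.55$.
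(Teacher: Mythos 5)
Your proposal is correct, and its core coincides with the paper's proof: you bound $\psi_+\ge s/4$ by observing that $a\|\zeta\|-t(\zeta)\le 0$ on $\{\|\zeta\|^2\le n\}$, whose probability is at least $1/2$ by the chi-squared median fact (the paper phrases this through the event $\mathbb{B}=\{a\|\zeta\|\le t(\zeta)\}=\{\|\zeta\|^2\le 2\sigma^2\log(\tfrac{p}{s}-1)/a^2\}$, which contains your event), and you then apply Theorem~\ref{th:inf} with $s'=s/2$ to get the term $\tfrac{s}{8}(1-16e^{-s/8})$, exactly as in the paper. Where you genuinely depart from the paper is in extracting the absolute constant $c$. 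The paper lets $s'\to 0^+$ and notes that the factor $\tfrac14-4e^{-(s-s')^2/(2s)}$ tends to $\tfrac14-4e^{-s/2}>0$ when $s\ge 6$; since the lower bound carries the prefactor $s'$, which vanishes in that limit, this argument only shows positivity of the bound for each fixed $s$ and leaves the uniformity over $s$ (and hence the existence of an \emph{absolute} $c$) implicit. Your choice $s'=s-v\sqrt{s}$ with $\sqrt{8\log 2}<v<\sqrt{6}$ pins the exponent at $v^2/2$, gives the explicit bound $(s-v\sqrt{s})\bigl(\tfrac14-4e^{-v^2/2}\bigr)$, and the monotonicity of $s\mapsto s-v\sqrt{s}$ on $s\ge 6$ yields the concrete uniform constant $c=(6-v\sqrt{6})\bigl(\tfrac14-4e^{-v^2/2}\bigr)$. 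This is a cleaner and fully rigorous way to obtain $c$ than the paper's limiting argument, at no extra cost; your closing remark correctly identifies $s>8\log 2\approx 5.55$ as the exact source of the hypothesis $s\ge 6$.
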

\begin{proof}
We start by proving a lower bound on the function $\psi_{+}$. We have
$$ \psi_{+}(n,p,s,a,\sigma) \geq s\mathbf{P}\left( \sigma \varepsilon \ge  a\| \zeta\| - t( \zeta ) \right)
\ge s\mathbf{P}(\sigma \varepsilon\ge 0)\mathbf{P}\left(\mathbb{B}\right) = \frac{s}{2} \mathbf{P}\left(\mathbb{B}\right) . $$
where $\mathbb{B} = \left\{ a\| \zeta\| \leq t(\zeta) \right\}$. 
Since a chi-squared random variable with $n$ degrees of freedom has a median smaller than $n$, we get under the conditions stated above that
$$  \mathbf{P}\left( \mathbb{B} \right) = \mathbf{P}\left( \| \zeta\|^{2} \leq \frac{2\sigma^{2} \log(\frac{p}{s}-1)   }{a^{2}} \right) \geq \frac{1}{2}. $$
Therefore, using Theorem \ref{th:inf} we get
$$ \forall s' \in (0,s], \quad \underset{\tilde{\eta}}{\inf}\underset{\beta \in \Omega^{p}_{s,a}}{\sup}\mathbf{E}_{\beta}\left|\tilde{\eta} - \eta_{\beta} \right| \geq s'\left( \frac{1}{4} - 4e^{-\frac{(s-s')^{2}}{2s}} \right). $$
Since $s\geq6$ we have $ 4e^{-s/2} < \frac{1}{4} $. Hence,
$$ \underset{s' \to 0^{+}}{\lim}\left( \frac{1}{4} - 4e^{-\frac{(s-s')^{2}}{2s}} \right) = \frac{1}{4} - 4e^{-s/2} > 0. $$
Thus, there  exists $c>0$ such that 
$$\underset{\tilde{\eta}}{\inf}\underset{\beta \in \Omega^{p}_{s,a}}{\sup}\mathbf{E}_{\beta}\left|\tilde{\eta} - \eta_{\beta} \right| \geq c.  $$
By setting $s'=s/2$, we also get
$$\underset{\tilde{\eta}}{\inf}\underset{\beta \in \Omega^{p}_{s,a}}{\sup}\mathbf{E}_{\beta}\left|\tilde{\eta} - \eta_{\beta} \right| \geq \frac{s}{8}\left( 1 - 16e^{-\frac{s}{8}}\right).  $$
The proposition follows.
\end{proof}
Proposition \ref{prop:nec1} implies that the condition $ n \geq \frac{2\sigma^{2}\log(\frac{p}{s}-1)}{a^{2}}  $ is necessary to achieve exact recovery for the Hamming risk. We give now a more accurate necessary condition for the regime $a = \mathcal{O}(\sigma)$. This regime is the most interesting when we consider the asymptotic setting where $a$ is decreasing.

\begin{theorem}\label{thm:nec1}
{Let $n,a,\sigma,s,p$ such that} $n>\frac{2\sigma^{2}\log\left(\frac{p}{s}-1\right)}{a^{2}}$, $a < \sqrt{2}\sigma$, and $s<p/2$, then there exists an absolute constant $c>0$ such that
$$ \underset{\tilde{\eta}}{\inf}\underset{\beta \in \Omega^{p}_{s,a}}{\sup}\mathbf{E}_{\beta}\left|\tilde{\eta} - \eta_{\beta} \right| \geq  c\sqrt{\frac{s^{7/4}(p-s)^{1/4}}{n\log\left(1+\frac{a^{2}}{4\sigma^{2}}\right)}}\exp\left(-\frac{n}{2}\log\left(1+\frac{a^{2}}{4\sigma^{2}}\right)\right) - 2se^{-\frac{s}{8}},  $$
where $\inf_{\tilde{\eta}}$ denotes the infimum over all selectors $\tilde{\eta}$.
\end{theorem}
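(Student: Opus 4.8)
The plan is to deduce everything from the lower bound of Theorem~\ref{th:inf}, which reduces the problem to a sharp lower estimate of the single quantity $\psi_{+}$. I would take $s'=s/2$ in Theorem~\ref{th:inf}: then $\frac{s'}{s}=\frac12$ and $\frac{(s-s')^{2}}{2s}=\frac{s}{8}$, so that
$$ \underset{\tilde{\eta}}{\inf}\,\underset{\beta \in \Omega^{p}_{s,a}}{\sup}\,\mathbf{E}_{\beta}|\tilde{\eta} - \eta_{\beta}| \;\ge\; \tfrac12\,\psi_{+}(n,p,s,a,\sigma) - 2s\,e^{-s/8}, $$
which already produces the subtracted term of the statement. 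It then remains to bound $\psi_{+}$ from below, and for this I keep only its first summand, $\psi_{+}\ge (p-s)\,\mathbf{P}\!\left(\sigma\varepsilon>t(\zeta)\right)$.

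The heart of the argument is an exact algebraic decomposition of the Gaussian tail exponent. Writing $v=\|\zeta\|^{2}\sim\chi^{2}_{n}$ and $L=\log(\frac{p}{s}-1)>0$ (positive since $s<p/2$), a direct expansion of \eqref{t} gives $t(\zeta)^{2}/\sigma^{2}=\frac{a^{2}v}{4\sigma^{2}}+L+\frac{\sigma^{2}L^{2}}{a^{2}v}$. Using the Gaussian lower tail $\mathbf{P}(\varepsilon>x)\ge \frac{c}{1+x}e^{-x^{2}/2}$ for $x\ge0$ (and $t(\zeta)\ge\sigma\sqrt{2L}\ge0$ by AM--GM), this yields
$$ (p-s)\,\mathbf{P}\!\left(\sigma\varepsilon>t(\zeta)\right)\;\ge\; c\,(p-s)e^{-L/2}\;\mathbf{E}\!\left[\frac{1}{1+t(\zeta)/\sigma}\,e^{-\frac{\sigma^{2}L^{2}}{2a^{2}v}}\,e^{-\frac{a^{2}v}{8\sigma^{2}}}\right]. $$
The deterministic prefactor simplifies to $(p-s)e^{-L/2}=\sqrt{s(p-s)}$, and the factor $e^{-a^{2}v/(8\sigma^{2})}$ is precisely the one realizing the target exponential, since $\mathbf{E}[e^{-a^{2}v/(8\sigma^{2})}]=(1+\frac{a^{2}}{4\sigma^{2}})^{-n/2}=\exp(-\frac{n}{2}\log(1+\frac{a^{2}}{4\sigma^{2}}))$.

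I would then exploit this by exponential tilting. Under the measure tilted by $e^{-a^{2}v/(8\sigma^{2})}$ the variable $v$ follows a $\mathrm{Gamma}(n/2,\tfrac12(1+\tfrac{a^{2}}{4\sigma^{2}}))$ law, concentrated around its mean $\tilde v=\frac{n}{1+a^{2}/(4\sigma^{2})}$. Restricting the expectation to the band $v\in[\tilde v,2\tilde v]$, on which the tilted law carries mass bounded below by an absolute constant, the expectation factorizes as $\exp(-\frac{n}{2}\log(1+\frac{a^{2}}{4\sigma^{2}}))\cdot\widetilde{\mathbf{P}}(v\in[\tilde v,2\tilde v])$ times the minimum of the two remaining factors over the band. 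Here both hypotheses enter: from $n>\frac{2\sigma^{2}L}{a^{2}}$ and $a<\sqrt2\sigma$ one gets $\tilde v\ge\frac23\cdot\frac{2\sigma^{2}L}{a^{2}}$, hence $e^{-\sigma^{2}L^{2}/(2a^{2}v)}\ge e^{-3L/8}=(\frac{s}{p-s})^{3/8}$, which upgrades the prefactor to $\sqrt{s(p-s)}\,(\frac{s}{p-s})^{3/8}=s^{7/8}(p-s)^{1/8}$; and on the band $t(\zeta)/\sigma\lesssim\frac{a}{\sigma}\sqrt n$, so $\frac{1}{1+t(\zeta)/\sigma}\gtrsim\frac{\sigma}{a\sqrt n}\asymp(n\log(1+\tfrac{a^{2}}{4\sigma^{2}}))^{-1/2}$, the last equivalence again using $a<\sqrt2\sigma$ to replace $a^{2}/\sigma^{2}$ by $\log(1+\frac{a^{2}}{4\sigma^{2}})$ up to constants. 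Collecting these bounds gives exactly $\psi_{+}\ge c\,\sqrt{\frac{s^{7/4}(p-s)^{1/4}}{n\log(1+a^{2}/(4\sigma^{2}))}}\exp(-\frac{n}{2}\log(1+\frac{a^{2}}{4\sigma^{2}}))$, and combining with the first display finishes the proof.

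The main obstacle is the simultaneous control of the two correction factors on the concentration band: the factor $e^{-\sigma^{2}L^{2}/(2a^{2}v)}$ pushes $v$ upward, whereas the Gaussian prefactor $1/(1+t(\zeta)/\sigma)$ and the requirement of constant tilted mass constrain $v$, and it is exactly the interplay of the hypotheses $n>\frac{2\sigma^{2}L}{a^{2}}$ and $a<\sqrt2\sigma$ that reconciles these and pins down the exponents $7/8$ and $1/8$ together with the $(n\log(1+a^{2}/(4\sigma^{2})))^{-1/2}$ factor. A secondary technical point is the non-asymptotic lower bound on the tilted Gamma mass $\widetilde{\mathbf{P}}(v\in[\tilde v,2\tilde v])$ by an absolute constant, which I would establish through explicit Gamma/chi-squared tail estimates rather than a central limit approximation.
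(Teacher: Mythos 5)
Your proposal is correct and follows essentially the same route as the paper's own proof: reduction via Theorem \ref{th:inf} with $s'=s/2$, discarding the second summand of $\psi_{+}$, applying a Gaussian lower tail bound, and extracting the factor $(p-s)e^{-L/2}=\sqrt{s(p-s)}$ with $L=\log(\frac{p}{s}-1)$ before estimating the resulting chi-squared average. Your exponential tilting is precisely the paper's change of variables $v=u\big(1+\frac{a^{2}}{4\sigma^{2}}\big)$ (the MGF identity appears there as the factor $e^{-\frac{n}{2}\log(1+\frac{a^{2}}{4\sigma^{2}})}$), your lower bound on the tilted-Gamma mass of $[\tilde v,2\tilde v]$ plays the role of the paper's restriction of the integral to $[n,\infty)$ combined with Inglot's chi-squared quantile bound, and the hypotheses $n>\frac{2\sigma^{2}L}{a^{2}}$ and $a<\sqrt{2}\sigma$ enter in the same two places (the gain $\big(\frac{s}{p-s}\big)^{3/8}$, which converts $\sqrt{s(p-s)}$ into $\sqrt{s^{7/4}(p-s)^{1/4}}$, and the prefactor $\big(n\log(1+\frac{a^{2}}{4\sigma^{2}})\big)^{-1/2}$), so that even the implicit limitation of your argument in the regime $a\sqrt{n}/\sigma=o(1)$ is shared by the paper's proof, whose Gaussian tail bound likewise requires its argument to be bounded away from zero on the effective integration region.
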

The proof of Theorem \ref{thm:nec1} is given in the Appendix.

\begin{corollary}\label{cor:nec}
Let $s \geq 6$, $a < \sqrt{2} \sigma$, and let 
$$ n < (1-\epsilon)\frac{\log(p-s) + 7 \log(s)}{4\log\left(1+\frac{a^{2}}{4\sigma^{2}}\right)}, $$
for some $\epsilon\in (0,1)$. Then, there exists $c>0$ such that 
$$ \underset{p \to \infty}{\liminf} \,\underset{\tilde{\eta}}{\inf}\underset{\beta \in \Omega^{p}_{s,a}}{\sup}\mathbf{E}_{\beta}\left|\tilde{\eta} - \eta_{\beta} \right| \geq c. $$
\end{corollary}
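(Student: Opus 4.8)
The plan is to obtain the corollary as a direct asymptotic consequence of the non-asymptotic bound in Theorem~\ref{thm:nec1}, with Proposition~\ref{prop:nec1} used to cover a boundary regime. The key preliminary remark is that the corollary does \emph{not} postulate the hypothesis $n>\frac{2\sigma^{2}\log(p/s-1)}{a^{2}}$ under which Theorem~\ref{thm:nec1} is stated. I would therefore split the argument according to whether this inequality holds. Throughout I work with $p$ large enough that $s<p/2$ (so that $p-s\to\infty$), which is the implicit asymptotic regime; for each such $p$ exactly one of the two cases below occurs.

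In the first regime, $n\le \frac{2\sigma^{2}\log(p/s-1)}{a^{2}}$, the hypotheses $s\ge 6$ and the bound on $n$ are precisely those of Proposition~\ref{prop:nec1}, which yields at once $\inf_{\tilde\eta}\sup_{\beta}\mathbf{E}_\beta|\tilde\eta-\eta_\beta|\ge c$ for an absolute constant $c>0$, with no asymptotics required. In the second regime, $n> \frac{2\sigma^{2}\log(p/s-1)}{a^{2}}$, this together with $a<\sqrt2\,\sigma$ and $s<p/2$ verifies all hypotheses of Theorem~\ref{thm:nec1}, and I would insert the upper bound on $n$ into its conclusion. Writing $L=\log(1+a^{2}/(4\sigma^{2}))$, the assumption $n<(1-\epsilon)\frac{\log(p-s)+7\log s}{4L}$ gives $nL<(1-\epsilon)\frac{\log(p-s)+7\log s}{4}$, whence
$$\exp\Big(-\tfrac{nL}{2}\Big)>\big[(p-s)\,s^{7}\big]^{-(1-\epsilon)/8}.$$
Combining this with the bound $nL\le \tfrac14(\log(p-s)+7\log s)$ applied to the prefactor $\sqrt{s^{7/4}(p-s)^{1/4}/(nL)}$, the powers of $(p-s)$ and of $s$ collect so that the leading term of Theorem~\ref{thm:nec1} is bounded below by a constant multiple of $(p-s)^{\epsilon/8}\,s^{7\epsilon/8}/\sqrt{\log(p-s)+7\log s}$. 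Since $p-s\to\infty$ and $s\ge 6$, this diverges, while the subtracted term $2se^{-s/8}$ stays bounded; hence the whole lower bound tends to $+\infty$.

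Finally I would combine the two regimes: along the subsequence of values of $p$ in the first regime the minimax risk is at least the fixed constant $c$, and along the subsequence in the second regime it diverges, so $\liminf_{p\to\infty}$ of the minimax risk is at least $c>0$, which is the claim. The only delicate point, and the step I expect to be the main obstacle, is the exponent bookkeeping: one must check that the genuine positive power $(p-s)^{\epsilon/8}$ — produced solely by the slack factor $\epsilon$ in the hypothesis — strictly dominates the logarithmic prefactor $\sqrt{\log(p-s)}$. Everything else is a direct substitution, and this is also where the role of $\epsilon>0$ is essential, as the argument would degenerate at $\epsilon=0$.
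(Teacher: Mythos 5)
Your proposal is correct and follows essentially the same route as the paper: the same two-case split according to whether $n \leq \frac{2\sigma^{2}\log(p/s-1)}{a^{2}}$, with Proposition~\ref{prop:nec1} handling the first case and Theorem~\ref{thm:nec1} (with the hypothesis on $n$ substituted into both the exponential and the prefactor) yielding a diverging lower bound of order $(s^{7/4}(p-s)^{1/4})^{\epsilon/2}$ up to a logarithmic factor in the second. Your exponent bookkeeping matches the paper's displayed bound, and your explicit remarks on the boundedness of $2se^{-s/8}$ and on the implicit requirement $s<p/2$ only make explicit what the paper leaves tacit.
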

\begin{proof}
If $n \leq  \frac{2\sigma^{2}\log\left(\frac{p}{s}-1\right)}{a^{2}} $, then the result follows from Proposition \ref{prop:nec1}. Now if $n >  \frac{2\sigma^{2}\log\left(\frac{p}{s}-1\right)}{a^{2}} $, then Theorem \ref{thm:nec1} yields
$$ \underset{\tilde{\eta}}{\inf}\underset{\beta \in \Omega^{p}_{s,a}}{\sup}\mathbf{E}_{\beta}\left|\tilde{\eta} - \eta_{\beta} \right|  \geq  c\left( \frac{\left(s^{7/4}(p-s)^{1/4}\right)^{\epsilon}}{(1-\epsilon)\log\left( s^{7/4}(p-s)^{1/4} \right)} \right)^{\frac{1}{2}} - 2se^{-\frac{s}{8}}.   $$
As $1\le s<p$, we have $\underset{p\to\infty}{\lim}s^{7/4}(p-s)^{1/4}=\infty$. The result follows.
\end{proof}
Corollary \ref{cor:nec} implies the following necessary condition for exact recovery under the Hamming risk: 
$$n \geq \frac{\log(p-s) + 7 \log(s)}{4\log\left(1+\frac{a^{2}}{4\sigma^{2}}\right)}.$$
This condition implies that 
$$n \geq \frac{\sigma^{2}(\log(p-s) +7 \log(s))}{a^{2}}.$$ 
The right hand side here is greater than $\frac{2\sigma^{2}\log(p/s-1)}{a^{2}}$, for instance, in the regime $s \ge p^{1/9}$. 
Thus, in this regime, we get a stronger result than the necessary 
condition $n\ge \frac{2\sigma^{2}\log(p/s-1)}{a^{2}}$ of Proposition \ref{prop:nec1}.

We will now show that the upper bound on the minimax risk decreases exponentially with the sample size. This will allow us to show that  the selector \eqref{eq:selec1} - \eqref{eq:selec2} achieves exact recovery under the same conditions as the ML decoder.
\begin{theorem}\label{thm:suf}
{Let the assumptions of Theorem~\ref{thm:suf1} be satisfied} and let $a\le\sigma$. Assume that for some $\delta \in (0,1]$ the following inequalities hold
$$ n_1>\frac{C_{0}}{\delta^2}s\log\left(\frac{ep}{s}\right)  \qquad \text{and} \qquad n_{2} \geq \frac{4\sigma^{2}\log\left(\frac{p}{s}-1\right)}{a^{2}}$$
where $C_0>0$ is the same as in Theorem~\ref{thm:suf1}.
Let $\hat{\eta}$  be the selector as in Theorem \ref{thm:suf1}. Then,
$$ \underset{\beta \in \Omega^{p}_{s,a}}{\sup}\mathbf{E}_{\beta}\left| \hat{\eta} - \eta_{\beta}\right|  \leq  2\sqrt{s(p-s)}\exp\left(-\frac{n_{2}}{2}\log\left(1+\frac{a^{2}}{4\sigma^{2}\left(1+\delta^{2}\right)}\right)\right) +  s e^{-\frac{n_{2}}{24}} + C_1p\Big(\frac{s}{2p}\Big)^{C_2s},$$
and
$$ \underset{\beta \in \Omega^{p}_{s,a}}{\sup}\mathbf{P}_{\beta}\left(  \hat{\eta} \neq \eta_{\beta} \right)  \leq  2\sqrt{s(p-s)}\exp\left(-\frac{n_{2}}{2}\log\left(1+\frac{a^{2}}{4\sigma^{2}\left(1+\delta^{2}\right)}\right)\right) +  s e^{-\frac{n_{2}}{24}} + C_1\Big(\frac{s}{2p}\Big)^{C_2s},$$
where $C_1>0$ and $C_2>0$ are the same as in Theorem~\ref{thm:suf1}.
\end{theorem}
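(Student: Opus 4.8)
The plan is to quote Theorem~\ref{thm:suf1} and reduce the whole statement to a single tail estimate. Since the selector here is exactly the one of Theorem~\ref{thm:suf1}, it suffices to bound $2\psi(n_2,p,s,a,\sigma\sqrt{1+\delta^2})$ from above; the two remainder terms are already furnished by that theorem, because $(s/2)^{C_2 s}p^{1-C_2 s}=p(s/2p)^{C_2 s}$ and $(s/2)^{C_2 s}p^{-C_2 s}=(s/2p)^{C_2 s}$, the factor $p$ being present only in the expectation version. Throughout I write $\tilde{\sigma}=\sigma\sqrt{1+\delta^2}$, $L=\log(\frac{p}{s}-1)$ (nonnegative since $s\le p/2$), and $R=\|\zeta\|$, so that $R^2\sim\chi^2_{n_2}$. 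Conditioning on $R$ turns each of the two probabilities defining $\psi$ into a one-dimensional Gaussian tail, to which I apply $\mathbf{P}(\tilde{\sigma}\varepsilon>x)\le\frac12 e^{-x^2/(2\tilde{\sigma}^2)}$ for $x\ge 0$; I then integrate over $R$ by means of the chi-square Laplace transform $\mathbf{E}[e^{-\lambda R^2}]=(1+2\lambda)^{-n_2/2}$ evaluated at $\lambda=a^2/(8\tilde{\sigma}^2)$, which is precisely what manufactures the factor $\exp(-\frac{n_2}{2}\log(1+\frac{a^2}{4\sigma^2(1+\delta^2)}))$.

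The computational core is the pair of completing-the-square identities for $t(R)=\frac{aR}{2}+\frac{\tilde{\sigma}^2 L}{aR}$:
\[
\frac{t(R)^2}{2\tilde{\sigma}^2}=\frac{a^2R^2}{8\tilde{\sigma}^2}+\frac{L}{2}+\frac{\tilde{\sigma}^2 L^2}{2a^2R^2},\qquad \frac{(aR-t(R))^2}{2\tilde{\sigma}^2}=\frac{a^2R^2}{8\tilde{\sigma}^2}-\frac{L}{2}+\frac{\tilde{\sigma}^2 L^2}{2a^2R^2}.
\]
Discarding the nonnegative last summand and using $e^{\mp L/2}=(s/(p-s))^{\pm 1/2}$, the first identity bounds the conditional tail in the $(p-s)$-term of $\psi$ by $\frac12(s/(p-s))^{1/2}\exp(-\frac{a^2R^2}{8\tilde{\sigma}^2})$; integrating and multiplying by $(p-s)$ yields $\frac12\sqrt{s(p-s)}\exp(-\frac{n_2}{2}\log(1+\frac{a^2}{4\tilde{\sigma}^2}))$, that is one half of the announced exponential term.

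The $s$-term is the delicate part, because of the positive part $(a\|\zeta\|-t(\zeta))_+$. My approach is to prove the single uniform estimate that the conditional tail $\mathbf{P}(\tilde{\sigma}\varepsilon>(aR-t(R))_+)$ is bounded by $\frac12((p-s)/s)^{1/2}\exp(-\frac{a^2R^2}{8\tilde{\sigma}^2})$ for \emph{every} $R$. Where $aR>t(R)$ this follows from the second identity; where $aR\le t(R)$, i.e. $R^2\le R_{*}^{2}:=2\tilde{\sigma}^2 L/a^2$, the positive part vanishes and the tail is merely $\frac12$, but here $R^2\le R_{*}^{2}$ gives $\frac{a^2R^2}{8\tilde{\sigma}^2}\le L/4$, so the right-hand side is at least $\frac12 e^{L/4}\ge\frac12$ and the bound still holds. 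Integrating and multiplying by $s$ produces the other $\frac12\sqrt{s(p-s)}\exp(\cdots)$. Adding the two halves and doubling gives $2\psi\le 2\sqrt{s(p-s)}\exp(\cdots)$, which already implies the stated bound a fortiori; the extra summand $s e^{-n_2/24}$ is slack that appears if one instead handles the region $aR\le t(R)$ more crudely, by splitting the expectation at $R^2=n_2/2$, bounding the tail by $\frac12$ on $\{R^2\le n_2/2\}$ and invoking the Chernoff chi-square lower deviation $\mathbf{P}(R^2\le n_2/2)\le e^{-n_2/24}$. Either way, inserting the result into Theorem~\ref{thm:suf1} and recording that the factor $p$ survives only in the expectation bound yields the two displayed inequalities.

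The main obstacle is exactly the control of this positive-part, small-$R$ region, where the Gaussian tail does not decay and one must certify that the exponential estimate nevertheless dominates (via $R^2\le R_{*}^{2}\le 2R_{*}^{2}$) or else absorb the region into a chi-square lower deviation. The hypotheses $a\le\sigma$ and $n_2\ge 4\sigma^2 L/a^2$ are not needed to establish the inequalities themselves, only to guarantee that the exponential term is the dominant one and that the whole bound tends to $0$, thereby matching the necessary condition obtained in Theorem~\ref{thm:nec1}.
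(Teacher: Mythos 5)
Your proof is correct, and it in fact establishes a slightly stronger statement than the one claimed. The skeleton is the same as the paper's: reduce via Theorem~\ref{thm:suf1} to bounding $2\psi(n_2,p,s,a,\tilde\sigma)$ with $\tilde\sigma=\sigma\sqrt{1+\delta^2}$, use the completing-the-square identities for $t(\cdot)$, a one-dimensional Gaussian tail bound, and integration of $\exp(-a^2\|\zeta\|^2/(8\tilde\sigma^2))$ against the chi-square law (your Laplace-transform evaluation and the paper's change of variables in the explicit density are the same computation). The genuine difference is the treatment of the region $\{a\|\zeta\|\le t(\zeta)\}$, where the positive part vanishes and the conditional tail equals $1/2$. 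The paper splits this region off as an event $\mathbb{D}^c$, embeds it into $\{\|\zeta\|^2\le n_2/2\}$ using the hypothesis $n_2\ge 4\sigma^2\log(p/s-1)/a^2$, and invokes the chi-square deviation bound of Lemma~\ref{lem:chi2}; this is exactly the origin of the term $s e^{-n_2/24}$ in the statement. You instead check that the exponential bound $\frac12 e^{L/2}e^{-a^2\|\zeta\|^2/(8\tilde\sigma^2)}$, with $L=\log(p/s-1)$, dominates $1/2$ pointwise on that region, since there $a^2\|\zeta\|^2/(8\tilde\sigma^2)\le L/4$ and $L\ge0$ because $s\le p/2$. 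This buys you $2\psi\le 2\sqrt{s(p-s)}\exp\left(-\frac{n_2}{2}\log\left(1+\frac{a^2}{4\sigma^2(1+\delta^2)}\right)\right)$ with no $s e^{-n_2/24}$ term and with no use of $a\le\sigma$ or of the lower bound on $n_2$, so the two displayed inequalities follow \emph{a fortiori}, and you correctly identify those hypotheses as relevant only to making the bound tend to zero. Two further points in favour of your route. First, your factor $\frac12$ in the Gaussian tail is not cosmetic: with the bound $\mathbf{P}(\varepsilon\ge y)\le e^{-y^2/2}$ used in the paper, the two main pieces of $\psi$ sum to $2\sqrt{s(p-s)}\exp(\cdots)$, and the factor $2$ inherited from Theorem~\ref{thm:suf1} then yields $4\sqrt{s(p-s)}\exp(\cdots)+2s e^{-n_2/24}$, i.e.\ twice the stated bound, whereas your constants land exactly on the statement. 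Second, the paper's embedding of $\mathbb{D}^c$ into $\{\|\zeta\|^2\le n_2/2\}$ is performed at noise level $\tilde\sigma$, so it actually needs $n_2\ge 4\sigma^2(1+\delta^2)\log(p/s-1)/a^2$, a factor $1+\delta^2$ more than assumed; your uniform pointwise bound sidesteps this issue entirely.
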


The proof of this theorem is given in the Appendix. It follows from Theorem~\ref{thm:suf1} by bounding $\psi$ from above.

   We can notice that both types of errors decrease exponentially as  the sample size increases to $\infty$.  

\begin{corollary}\label{cor:suf}
Under the conditions of Theorem \ref{thm:suf}, if $a\le\sigma/\sqrt{3}$ and $n_{2}\ge B\frac{\log(p-s) + \log(s)}{\log\left(1+\frac{a^{2}}{4\sigma^{2}\left(1+\delta^{2}\right)}\right)}$ for some $B>1$,  we have
$$ \underset{\beta \in \Omega^{p}_{s,a}}{\sup}\mathbf{P}_{\beta}\left( \hat{\eta} \neq \eta_{\beta} \right)  \leq  3\left(s(p-s)\right)^{\frac{1-B}{2}} + C_1\Big(\frac{s}{2p}\Big)^{C_2s}.  $$
\end{corollary}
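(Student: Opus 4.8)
The plan is to start directly from the probability bound already established in Theorem~\ref{thm:suf}, namely
$$ \sup_{\beta \in \Omega^{p}_{s,a}}\mathbf{P}_{\beta}\left(  \hat{\eta} \neq \eta_{\beta} \right)  \leq  2\sqrt{s(p-s)}\exp\left(-\frac{n_{2}}{2}L\right) +  s e^{-\frac{n_{2}}{24}} + C_1\Big(\frac{s}{2p}\Big)^{C_2s}, $$
where I abbreviate $L := \log\left(1+\frac{a^{2}}{4\sigma^{2}(1+\delta^{2})}\right)$. The whole corollary is then a deterministic manipulation of these three explicit summands: I would bound the first two separately and leave the third one untouched, since it already matches the target.

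First I would handle the leading exponential term. The hypothesis $n_{2}\ge B\,\frac{\log(p-s)+\log(s)}{L}=B\,\frac{\log(s(p-s))}{L}$ gives $\frac{n_{2}}{2}L\ge \frac{B}{2}\log(s(p-s))$, hence $\exp(-\frac{n_{2}}{2}L)\le (s(p-s))^{-B/2}$. Multiplying by $2\sqrt{s(p-s)}$ produces exactly $2(s(p-s))^{(1-B)/2}$, which is two thirds of the desired bound.

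The subtle step is the middle term $s e^{-n_{2}/24}$, and this is precisely where the strengthened hypothesis $a\le\sigma/\sqrt{3}$ (rather than merely $a\le\sigma$ as in Theorem~\ref{thm:suf}) enters. Since $\delta\le 1$ forces $1+\delta^{2}\ge 1$, and $a^{2}\le\sigma^{2}/3$, one gets $\frac{a^{2}}{4\sigma^{2}(1+\delta^{2})}\le\frac{1}{12}$; using $\log(1+x)\le x$ this yields $L\le\frac{1}{12}$, equivalently $\frac{1}{24L}\ge\frac{1}{2}$. Feeding in the same lower bound on $n_{2}$ gives $e^{-n_{2}/24}\le (s(p-s))^{-B/(24L)}\le (s(p-s))^{-B/2}$, where the last inequality uses $s(p-s)\ge 1$. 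Finally, the assumption $s\le p/2$ (inherited from Theorem~\ref{thm:suf1}) entails $s^{2}\le s(p-s)$, i.e. $s\le\sqrt{s(p-s)}$, so that $s e^{-n_{2}/24}\le \sqrt{s(p-s)}\,(s(p-s))^{-B/2}=(s(p-s))^{(1-B)/2}$. Adding the two bounds gives $3(s(p-s))^{(1-B)/2}$, and restoring the unchanged term $C_1(s/2p)^{C_2 s}$ completes the proof.

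I expect the only genuine obstacle to be the bookkeeping of constants in the middle term: one must check that $a\le\sigma/\sqrt{3}$ is exactly what is needed to guarantee $\frac{1}{24L}\ge\frac{1}{2}$, so that the decay exponent of $s e^{-n_{2}/24}$ is at least as fast as that of the leading term, and that $s\le p/2$ is available to absorb the prefactor $s$ into $\sqrt{s(p-s)}$. No new probabilistic input is required beyond Theorem~\ref{thm:suf}.
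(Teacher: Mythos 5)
Your proposal is correct and takes essentially the same route as the paper: both proofs bound the leading term via the hypothesis on $n_{2}$, absorb the middle term $s e^{-n_{2}/24}$ using $\log\bigl(1+\tfrac{a^{2}}{4\sigma^{2}(1+\delta^{2})}\bigr)\le \tfrac{1}{12}$ (a consequence of $a\le \sigma/\sqrt{3}$ and $\log(1+x)\le x$) together with $s\le\sqrt{s(p-s)}$, and leave the third term untouched. The only difference is that the paper first verifies that the new hypothesis on $n_{2}$ implies the condition $n_{2}\ge 4\sigma^{2}\log(p/s-1)/a^{2}$ of Theorem \ref{thm:suf} before invoking it, a one-line check you implicitly subsume under ``the conditions of Theorem \ref{thm:suf}''; this is a matter of how one reads the corollary's hypotheses, not a gap in the argument.
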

\begin{proof}
Since $\log(p-s) + \log(s) \geq  \log\left(\frac{p}{s}-1\right)$, and $\log\left(1 + x\right) \leq x$, we get 
$$ n_{2} \geq \frac{4\sigma^{2}\log\left( \frac{p}{s}-1\right)}{a^{2}}. $$
Hence Theorem \ref{thm:suf} applies. Moreover, since $a\le\sigma/\sqrt{3}$ we also have 
$$ e^{-\frac{n_{2}}{24}} \leq \exp\left(-\frac{n_{2}}{2}\log\left(1+\frac{a^{2}}{4\sigma^{2}\left(1+\delta^{2}\right)}\right)\right).  $$
We can conclude by using the lower bound on  $n_{2}$  and  the inequality $s \leq \sqrt{s(p-s)}$.
\end{proof}
As consequence of the last corollary, sufficient conditions for the selector \eqref{eq:selec1} - \eqref{eq:selec2} with threshold \eqref{t} to achieve exact recovery are as follows
$$  n_1>\frac{C_{0}}{\delta^2}s\log\left(\frac{ep}{s}\right)\quad \text{and}\quad  n_{2} > (1+\epsilon)\frac{\log(p-s) + \log(s)}{\log\left(1+\frac{a^{2}}{4\sigma^{2}\left(1+\delta^{2}\right)}\right)},$$
for some $\delta \in (0,1]$ and $\epsilon >0$. 

Comparing the rate of convergence in Corollary \ref{cor:suf} to the rate for the ML decoder established in \cite{rad}, we notice that they have similar form. Indeed, \cite{rad} proves the bound 
$$  \underset{\beta \in \Omega^{p}_{s,a}}{\sup}\mathbf{P}_{\beta}\left(  \hat{\eta} \neq \eta_{\beta} \right)  \leq  s\bigg(\left(es(p-s)\right)^{-B^{*}} + \left(\frac{s}{e(p-s)}\right)^{B^{*}s}\bigg),  $$
for some $B^{*}>0$ and $s\le p/2$.

It is interesting to compare these conditions with the best known in the literature (where only the risk \eqref{eq:exact:supp}  was studied). 
Using  \eqref{wain0}, we see that, in the zone $a/\sigma = \mathcal{O}(1)$, our sufficient condition for exact recovery
has the form
\begin{equation}\label{suff}
n = \Omega\left(s\log\Big(\frac{p}{s}\Big) \vee \frac{\sigma^{2}\log(p-s)}{a^{2}} \right).
\end{equation}
As follows from the discussion in the Introduction, {this gives the exact phase transition in the zone $a/\sigma = O(1/\sqrt{s})$, while in the zone $a/\sigma = \mathcal{O}(1)$, $a/\sigma=\Omega(1/\sqrt{s})$,} combination of the results of \cite{wangWain} and \cite{rad} shows that the exact phase transition (realized by the ML decoder) is given by
$$ 
    n = \Omega\left( \frac{s\log(\frac{p}{s})}{\log(1+s\frac{a^{2}}{\sigma^{2}})} \vee \frac{\sigma^{2}\log(p-s)}{a^{2}} \right). 
    $$
It remains an open question whether the improvement by the term $\log(1+s\frac{a^{2}}{\sigma^{2}})$ appearing here is achievable by computationally tractable methods. 

Our sufficient condition \eqref{suff} is  the same as for the ML decoder \cite{wainwrightb}, with the advantage that our selector can be computed in polynomial time. Nevertheless, the knowledge of parameters $s,a$ and $\sigma$ is required for the construction. This motivates us to derive, in the next section, adaptive variants of the proposed selector.

\section{ Nearly optimal adaptive procedures}\label{sec:adap}

In this section, we propose three adaptive versions of our selector. The first one assumes that we know only $a$ and do not know $s$ and $\sigma$, the second assumes only the knowledge of $\sigma$, and the third one is completely adaptive to all the parameters.  

We first present the following a tail bound for the Student distribution that will be useful to derive the results.

\begin{lem}\label{lem:ck}
Let $Z$ be a Student random variable with $k$ degrees of freedom. There exist constants $c,C>0$ independent of $k$ such that for all $b\ge  1/\sqrt{k}$ we have
$$  c \frac{(1+b^{2})^{-\frac{k-1}{2}}}{\sqrt{k}b} \leq \mathbf{P}\left(|Z| \geq \sqrt{k}b\right) \leq C \frac{(1+b^{2})^{-\frac{k-1}{2}}}{\sqrt{k}b}. $$
\end{lem}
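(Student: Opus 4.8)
The plan is to start from the explicit Student density $f_k(x) = c_k\,(1+x^2/k)^{-(k+1)/2}$ with $c_k = \Gamma(\frac{k+1}{2})/(\sqrt{k\pi}\,\Gamma(\frac{k}{2}))$, and to compute the tail by the substitution $x=\sqrt{k}\,y$. This gives
$$ \mathbf{P}\left(|Z|\ge \sqrt{k}\,b\right) = 2\,\frac{\Gamma(\frac{k+1}{2})}{\sqrt{\pi}\,\Gamma(\frac{k}{2})}\,I_k(b), \qquad I_k(b):=\int_b^\infty (1+y^2)^{-\frac{k+1}{2}}\,dy. $$
Since the target upper and lower expressions agree up to a constant factor, it suffices to show two facts with constants independent of $k$: first, that the Gamma ratio satisfies $\Gamma(\frac{k+1}{2})/\Gamma(\frac{k}{2})\asymp\sqrt{k}$ (which I would obtain from a Wendel/Gautschi two-sided bound $(\tfrac{k}{k+1})^{1/2}\sqrt{k/2}\le \Gamma(\frac{k+1}{2})/\Gamma(\frac{k}{2})\le \sqrt{k/2}$, valid for all $k\ge1$), and second, that $I_k(b)\asymp (1+b^2)^{-(k-1)/2}/(kb)$. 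Combining these two yields exactly the claimed equivalence.

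\textbf{Two-sided bound on $I_k(b)$ by integration by parts.} For $k\ge2$ I would integrate by parts with $u=1/y$ and $dv=y(1+y^2)^{-(k+1)/2}dy$, so that $v=-\frac{1}{k-1}(1+y^2)^{-(k-1)/2}$ and the boundary term at infinity vanishes. This produces the exact identity
$$ I_k(b) = \frac{(1+b^2)^{-\frac{k-1}{2}}}{(k-1)b} - \frac{1}{k-1}\,J_k(b), \qquad J_k(b):=\int_b^\infty \frac{(1+y^2)^{-\frac{k-1}{2}}}{y^2}\,dy. $$
Dropping the non-negative term $J_k(b)$ gives the upper bound $I_k(b)\le (1+b^2)^{-(k-1)/2}/((k-1)b)\le 2(1+b^2)^{-(k-1)/2}/(kb)$. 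For the lower bound I would control $J_k$ by rewriting $(1+y^2)^{-(k-1)/2}=(1+y^2)(1+y^2)^{-(k+1)/2}$, which yields $J_k(b)=I_k(b)+\int_b^\infty y^{-2}(1+y^2)^{-(k+1)/2}dy\le(1+b^{-2})I_k(b)$. Substituting back into the identity and solving the resulting self-referential inequality for $I_k(b)$ gives $I_k(b)\ge (1+b^2)^{-(k-1)/2}/\big(b(k+b^{-2})\big)$.

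\textbf{Where the hypothesis enters, and the remaining case.} The assumption $b\ge 1/\sqrt{k}$ is used precisely here: it gives $b^{-2}\le k$, hence $k+b^{-2}\le 2k$, so the lower bound becomes $I_k(b)\ge (1+b^2)^{-(k-1)/2}/(2kb)$, matching the upper bound up to the factor $4$. I expect this lower-bound step to be the main obstacle, since naive tail comparisons lose a $k$-dependent factor; the self-referential estimate for $J_k$ combined with $b\ge1/\sqrt{k}$ is what keeps all constants universal. The degenerate case $k=1$ (for which the integration by parts is invalid) I would treat directly: there $Z$ is Cauchy and $\mathbf{P}(|Z|\ge b)=\frac{2}{\pi}\arctan(1/b)$, while the target reduces to $1/b$; since $\arctan(x)\asymp x$ on $[0,1]$ and $b\ge1$, this case follows at once. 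Assembling the Gamma ratio estimate with the two-sided bound on $I_k(b)$ then completes the proof.
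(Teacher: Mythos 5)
Your proposal is correct, but it follows a genuinely different route from the paper. The paper never normalizes the integral: it works directly with the Student density $f_Z(t)=c_k^*(1+t^2/k)^{-(k+1)/2}$ and observes the exact differential identity that $g(t)=-c_k^*t^{-1}(1+t^2/k)^{-(k-1)/2}$ satisfies $g'(t)=(1+t^{-2})f_Z(t)$; since $1\le 1+t^{-2}\le 2$ on the integration range $t\ge\sqrt{k}b\ge 1$, the tail probability is sandwiched between constant multiples of $-g(\sqrt{k}b)$, which is exactly the target expression up to the factor $c_k^*$, and the only remaining input is that $c_k^*$ is bounded above and below uniformly in $k$ (quoted via its limit). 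Your argument reaches the same sandwich by a more computational path: substitution to isolate the Gamma ratio (controlled by Wendel's inequality), integration by parts to produce the exact identity for $I_k(b)$, and the self-referential bound $J_k(b)\le(1+b^{-2})I_k(b)$ — which, when unwound, plays precisely the role of the paper's pointwise bound $1+t^{-2}\le 2$, and uses the hypothesis $b\ge 1/\sqrt{k}$ in the same way. What the paper's route buys is uniformity: the identity for $g$ is valid for every $k\ge 1$ (including the Cauchy case, since for $k=1$ one gets $g'(t)=c_1^*t^{-2}=(1+t^{-2})f_Z(t)$ exactly), so no case split is needed, and the whole proof is a few lines. What your route buys is explicitness: all constants are traceable (e.g.\ $c=1/(2\sqrt{\pi})$, $C=2\sqrt{2}/\sqrt{\pi}$ for $k\ge2$) and you do not need to invoke the asymptotics of the normalizing constant, only the classical two-sided Gamma-ratio bound; the price is the separate $k=1$ (Cauchy) case, which you handle correctly, and the restriction $k\ge 2$ in the integration by parts.
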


The proof of this lemma is given in the Appendix.


The Square-Root SLOPE estimator $\hat \beta$ is adaptive to the sparsity parameter $s$ and to the scale parameter $\sigma$. The dependence of the selector $\hat{\eta}$ defined in \eqref{eq:selec1} - \eqref{eq:selec2} on the parameters $s$, $\sigma$ and $a$ only appears in the definition of the threshold $t(\cdot)$. Hence, we will replace it by an adaptive threshold. In this section, we assume that $n$ is an even integer and the sample splitting is done in two subsamples of equal sizes such that $n_{1}=n_{2}=n/2$. In Theorem \ref{thm:suf}, we have shown that the selector $\hat{\eta}$ defined in \eqref{eq:selec1} - \eqref{eq:selec2}  with the threshold  function
\begin{equation}\label{t2}t(u)=\frac{a\|u\|}{2}+ \frac{(1+\delta^2)\sigma^{2}\log\left(\frac{p}{s}-1\right)}{a\|u\|},\quad \forall u \in \mathbb{R}^{n_2},
\end{equation}
achieves nearly optimal conditions of exact recovery.
We now set a new threshold by simply dropping the second term in  \eqref{t2}:
\begin{equation}\label{eq:thres2}
    t(u)=\frac{a\|u\|}{2},\quad \forall u \in \mathbb{R}^{n_2}.
\end{equation}
Then, the procedure becomes adaptive to unknown $s$ and $\sigma$, {but still requires knowledge of $a$}. The phase transition for this procedure is given by the following proposition. 

\begin{prop}\label{prop:ad}
{Let the assumptions of Theorem \ref{thm:suf1} be satisfied, where we relax the sparsity assumption to $s<p$. Let $n$ be an even integer and $2(1\vee 1/C_2)\le s$.} Set $n_{1} = n_{2} = n/2$, and let the threshold $t(\cdot)$ be defined in \eqref{eq:thres2}. Then, the selector $\hat{\eta}$ defined in  \eqref{eq:selec1} - \eqref{eq:selec2} achieves exact recovery under both risks (Hamming and support recovery) if $$n \geq 2\Big(C_{0}s\log\Big(\frac{ep}{s}\Big) \vee \frac{2\log{p}}{\log\left(1+\frac{a^{2}}{8\sigma^{2}}\right)} +1\Big).$$
\end{prop}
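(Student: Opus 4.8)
The plan is to re-run the argument of Theorem~\ref{thm:suf1} with $\delta=1$ and the symmetric threshold \eqref{eq:thres2}, and then convert the resulting bound into a Student tail estimate controlled by Lemma~\ref{lem:ck}. Setting $n_1=n_2=n/2$ and $\delta=1$, the hypothesis of Proposition~\ref{prop:1} reads $n_1=n/2>C_0 s\log(ep/s)$, which is the first half of the stated condition on $n$. The crucial observation is that for $t(u)=a\|u\|/2$ the two error types in \eqref{15} coincide: the false-positive term involves $\mathbf{P}(\sigma_*\varepsilon>a\|\zeta\|/2)$, while in the false-negative term one has $(a\|\zeta\|-t(\zeta))_+=a\|\zeta\|/2$, so it involves $\mathbf{P}(\sigma_*\varepsilon\ge a\|\zeta\|/2)$, the same quantity. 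Consequently, on the event $\mathbb{A}=\{\|\hat\beta-\beta\|\le\sigma\}$, where $\sigma_*=(\sigma^2+\|\hat\beta-\beta\|^2)^{1/2}\le\sigma\sqrt2$, the bound \eqref{15} collapses to
$$\mathbf{E}_\beta[|\hat\eta-\eta_\beta|\,|\,\mathcal D_1]\le 2p\,\mathbf{P}\big(\sigma\sqrt2\,\varepsilon\ge a\|\zeta\|/2\big),$$
with $\zeta\sim\mathcal N(0,\mathbb I_{n_2})$ independent of $\varepsilon$. The relaxation to $s<p$ causes no trouble, since $t(u)=a\|u\|/2\ge0$ and $(a\|\zeta\|-t(\zeta))_+=a\|\zeta\|/2$ irrespective of the sign of $p/2-s$, so the inequality $s\le p/2$ used in Theorem~\ref{thm:suf1} is no longer needed.

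Next I would rewrite the remaining probability as a Student tail. Since $Z:=\sqrt{n_2}\,\varepsilon/\|\zeta\|$ has the Student distribution with $n_2$ degrees of freedom,
$$\mathbf{P}\big(\sigma\sqrt2\,\varepsilon\ge a\|\zeta\|/2\big)=\mathbf{P}\big(Z\ge\sqrt{n_2}\,b\big)=\tfrac12\,\mathbf{P}\big(|Z|\ge\sqrt{n_2}\,b\big),\qquad b=\frac{a}{2\sqrt2\,\sigma},$$
so that $b^2=a^2/(8\sigma^2)$, which is precisely what produces the factor $\log(1+a^2/(8\sigma^2))$ in the statement. The condition on $n_2$ together with $\log(1+x)\le x$ gives $n_2 b^2\ge2\log p>1$, hence $b\ge1/\sqrt{n_2}$ and Lemma~\ref{lem:ck} applies, yielding
$$\mathbf{E}_\beta[|\hat\eta-\eta_\beta|\,|\,\mathcal D_1]\ \le\ C\,p\,\frac{(1+b^2)^{-(n_2-1)/2}}{\sqrt{n_2}\,b}\qquad\text{on }\mathbb{A}.$$

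The core estimate is then to show that this main term tends to $0$. Using the hypothesis in the form $n_2-1\ge 2\log p/\log(1+b^2)$ gives $(1+b^2)^{-(n_2-1)/2}\le e^{-\log p}=1/p$, which cancels the factor $p$ and leaves $C/(\sqrt{n_2}\,b)$. Here lies the one genuinely delicate point: a bare exponential cancellation would only give boundedness, and it is the extra denominator $\sqrt{n_2}\,b$ supplied by Lemma~\ref{lem:ck} that upgrades this to convergence; indeed $n_2 b^2\ge 2\log p$ forces $\sqrt{n_2}\,b\ge\sqrt{2\log p}\to\infty$, so the main term is at most $C/\sqrt{2\log p}\to0$. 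Finally I would add the contribution of $\mathbb{A}^c$: by Proposition~\ref{prop:1}, $\mathbf{P}(\mathbb{A}^c)\le C_1(s/2p)^{C_2 s}$, contributing $2pC_1(s/2p)^{C_2 s}$ to the Hamming risk and $C_1(s/2p)^{C_2 s}$ to the support-recovery risk. The assumption $2(1\vee 1/C_2)\le s$ is exactly what makes these vanish: it guarantees $C_2 s\ge2$, and writing $2pC_1(s/2p)^{C_2s}=2C_1\exp\big(\log p-C_2 s\log(2p/s)\big)$ one checks, using $s<p$ and the unimodality of $s\mapsto C_2 s\log(2p/s)-\log p$ on $[2/C_2\vee2,\,p)$, that the exponent tends to $-\infty$ at both endpoints and hence throughout. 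Collecting the two contributions and letting $p\to\infty$ gives exact recovery under both risks. The main obstacle, as noted, is this last use of the $1/(\sqrt{n_2}\,b)$ factor together with $n_2 b^2\ge2\log p$ to pass from a merely bounded main term to a vanishing one; verifying that the $\mathbb{A}^c$ remainder is negligible under $2(1\vee 1/C_2)\le s$ is the only other point requiring care.
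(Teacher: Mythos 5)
Your proposal is correct and follows essentially the same route as the paper's own proof: re-run Theorem~\ref{thm:suf1} with $\delta=1$ so the symmetric threshold $t(u)=a\|u\|/2$ collapses the false-positive and false-negative terms into $2p\,\mathbf{P}(\sqrt{2}\sigma\varepsilon\ge a\|\zeta\|/2)$, rewrite this as a Student tail with $b^2=a^2/(8\sigma^2)$, apply Lemma~\ref{lem:ck} (after checking $n_2b^2\ge 2\log p>1$), use $n_2-1\ge 2\log p/\log(1+b^2)$ to cancel the factor $p$, and let the residual $1/(\sqrt{n_2}\,b)\le 1/\sqrt{2\log p}$ drive the main term to zero, with Proposition~\ref{prop:1} controlling the event $\mathbb{A}^c$. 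The only (immaterial) difference is that you dispose of the remainder $p(s/2p)^{C_2s}$ by a unimodality argument over $[2(1\vee 1/C_2),p)$, whereas the paper splits into the cases $s\le p/2$ and $p/2<s<p$ using monotonicity of $t\mapsto(t/2p)^{t}$; both verifications are valid.
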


\begin{proof}
Following the lines of the proof of Theorem \ref{thm:suf1} and choosing there $\delta = 1$ we get
\begin{equation}\label{eq:aa}  \underset{\beta \in \Omega^{p}_{s,a}}{\sup}\mathbf{E}_{\beta}\left| \hat{\eta} - \eta_{\beta} \right| \leq p\mathbf{P}\left( \sqrt{2}\sigma \left|\varepsilon \right|\ge \frac{a\|\zeta\|}{2} \right)+C_1p\Big(\frac{s}{2p}\Big)^{C_2s},
\end{equation}
where $\varepsilon$ is a standard Gaussian random variable and $\zeta \sim \mathcal{N}\left(0,\mathbb{I}_{n_2}\right)$ is a standard Gaussian random vector in $\mathbb{R}^{n_2}$ independent of $\varepsilon$.
In order to prove exact recovery, we need to show that both terms on the right hand side of \eqref{eq:aa} vanish as $p$ goes to infinity. We first consider the second term. Note that  the function $t\mapsto \big(\frac{t}{2p}\big)^{t}$ is decreasing for $1\le t\le p/2$. Thus, if $2(1\vee 1/C_2) \le s \leq p/2$ we have 
$$  p\Big(\frac{s}{2p}\Big)^{C_2s}\le  p\Big(\frac{1\vee 1/C_2}{p}\Big)^{2} \to 0 \quad {\rm as} \ p\to \infty,$$
while for $ p/2 < s < p$, 
$$ p\Big(\frac{s}{2p}\Big)^{C_2s}\le  p 2^{-C_2 p/2}. $$
Thus, to prove the proposition,  it remains to show that the first term on the right hand side of \eqref{eq:aa} vanishes. Using the independence between $\varepsilon$ and $\zeta$, we have
$$ \mathbf{P}\left( \sqrt{2}\sigma \left|\varepsilon \right|\ge\frac{a\|\zeta\|}{2} \right)=\mathbf{P}\left(|Z|\geq\frac{a\sqrt{n_{2}}}{2\sqrt{2}\sigma}\right), $$
where $Z$ is a Student random variable with $n_{2}$ degrees of freedom. To bound the last probability, we use Lemma \ref{lem:ck}.  Since $\log(1+x) \leq x ,\forall x\geq 0$, the assumption on $n_2$ implies 
$$n_{2} >  \frac{16\sigma^{2}\log{p}}{a^{2}}.$$
In particular, since $p\ge 3$ we have $ \frac{n_{2} a^{2}}{8\sigma^{2}} \geq 1$. Thus, by Lemma \ref{lem:ck},
\begin{align} \nonumber
p\mathbf{P}\left( \sqrt{2}\sigma \left|\varepsilon \right|\ge\frac{a\|\zeta\|}{2} \right)& \leq \frac{Cp\sigma}{a\sqrt{n_{2}}}  \left(1+\frac{{a}^{2}}{8\sigma^{2}}\right)^{-\frac{n_{2}-1}{2}} \\
&= \nonumber
\frac{C\sigma}{a\sqrt{n_{2}}} \exp\left(\log p-\frac{n_{2}-1}{2}\log\left(1+\frac{a^{2}}{8\sigma^{2}}\right)\right)\\
&\leq \frac{C\sigma \sqrt{ \log\left(1+\frac{a^{2}}{8\sigma^{2}}\right)}}{a\sqrt{2\log p}} \label{rhss}
\end{align}
where we have used the condition $n_{2} \geq  \frac{2\log{p}}{\log\left(1+\frac{a^{2}}{8\sigma^{2}}\right)}+1$.
The expression in \eqref{rhss} tends to 0 as $p\to \infty$. This completes the proof.
\end{proof}

Proposition \ref{prop:ad} shows that the condition  
$$ n = \Omega \left( s\log{\left(\frac{ep}{s}\right)} \vee \frac{\log{p}}{\log\left(1+\frac{a^{2}}{8\sigma^{2}}\right)}\right) $$
is sufficient for exact recovery without knowing the sparsity parameter $s$. 

We now turn to the case where both $s$ and $a$ are unknown, but $\sigma$ is known. In Proposition \ref{prop:ad}, we have used the condition
$$ n_{2} >  \frac{2\log{p}}{\log\Big(1+\frac{a^{2}}{8\sigma^{2}}\Big)},$$
which is equivalent to
\begin{equation}\label{eq:a2} 
a^{2} > 8\sigma^{2} \left(p^{\frac{2}{n_{2}}}-1\right).
\end{equation} 
This inspires us to replace the threshold function $t(u)=a\|u\|/2$ considered in Proposition \ref{prop:ad} by
\begin{equation}\label{eq:thres3}
    t(u)=\sigma\sqrt{2\left(p^{\frac{2}{n_{2}}}-1\right)}\|u\|,\quad u \in \mathbb{R}^{n_2}.
\end{equation}
Then, we get the following result analogous to Proposition \ref{prop:ad}.
\begin{prop}\label{th:ad}
{Let the assumptions of Proposition \ref{prop:ad} be satisfied.} 
Let the threshold $t(\cdot)$ be defined in \eqref{eq:thres3}. Then, the selector $\hat{\eta}$ defined in  \eqref{eq:selec1} - \eqref{eq:selec2} achieves exact recovery under both risks (Hamming and support recovery) if $n \geq 2\bigg(C_{0}s\log\big(\frac{ep}{s}\big) \vee \frac{2\log{p}}{\log\left(1+\frac{a^{2}}{8\sigma^{2}}\right)} \bigg).$
\end{prop}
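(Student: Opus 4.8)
The plan is to follow the proof of Proposition \ref{prop:ad} almost verbatim, adjusting only for the new threshold \eqref{eq:thres3}. Write $\tau = \sigma\sqrt{2(p^{2/n_2}-1)}$, so that the threshold is $t(u)=\tau\|u\|$. Starting from inequality \eqref{15} in the proof of Theorem \ref{thm:suf1} with $\delta=1$ (so that $\sigma_*\le \sqrt2\,\sigma$ on the event $\mathbb{A}=\{\|\hat\beta-\beta\|\le\sigma\}$), then taking expectation over $\mathcal D_1$ and invoking Proposition \ref{prop:1}, I would reach
$$\underset{\beta \in \Omega^{p}_{s,a}}{\sup}\mathbf{E}_\beta|\hat\eta-\eta_\beta|\le 2(p-s)\mathbf{P}\big(\sqrt2\,\sigma\varepsilon>\tau\|\zeta\|\big)+2s\,\mathbf{P}\big(\sqrt2\,\sigma\varepsilon\ge (a-\tau)_+\|\zeta\|\big)+C_1 p\Big(\tfrac{s}{2p}\Big)^{C_2 s}.$$

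The one genuinely new step is to dominate the ``signal'' term by the ``noise'' term, which have different thresholds here (unlike in Proposition \ref{prop:ad}, where both equalled $\tfrac a2\|\zeta\|$). The sample-size assumption $n_2\ge \frac{2\log p}{\log(1+a^2/(8\sigma^2))}$ is exactly equivalent to \eqref{eq:a2}, that is to $a\ge 2\tau$; hence $(a-\tau)_+=a-\tau\ge\tau$, so the signal-term probability is bounded by the noise-term probability. Both terms then carry the common threshold $\tau\|\zeta\|$ and collapse, exactly as in \eqref{eq:aa}, to
$$\underset{\beta \in \Omega^{p}_{s,a}}{\sup}\mathbf{E}_\beta|\hat\eta-\eta_\beta|\le p\,\mathbf{P}\big(\sqrt2\,\sigma|\varepsilon|\ge\tau\|\zeta\|\big)+C_1 p\Big(\tfrac{s}{2p}\Big)^{C_2 s}.$$
The second term vanishes as $p\to\infty$ by the same monotonicity argument as in Proposition \ref{prop:ad} (using $2(1\vee 1/C_2)\le s$).

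For the first term I would pass to the Student distribution: with $b=\sqrt{p^{2/n_2}-1}$, the ratio $\sqrt{n_2}\,\varepsilon/\|\zeta\|$ is Student with $n_2$ degrees of freedom, and $\mathbf{P}(\sqrt2\,\sigma|\varepsilon|\ge\tau\|\zeta\|)=\mathbf{P}(|Z|\ge\sqrt{n_2}\,b)$. Here the calibration of \eqref{eq:thres3} pays off, since by construction $1+b^2=p^{2/n_2}$ exactly. Because $p^{2/n_2}-1\ge 2\log p/n_2\ge 1/n_2$, Lemma \ref{lem:ck} applies and yields
$$p\,\mathbf{P}(|Z|\ge\sqrt{n_2}\,b)\le C\,\frac{p\,(1+b^2)^{-(n_2-1)/2}}{\sqrt{n_2}\,b}=C\,\frac{p^{1/n_2}}{\sqrt{n_2}\,b}=C\,\frac{\sqrt{1+b^2}}{\sqrt{n_2}\,b}=C\sqrt{\frac1{n_2}\Big(1+\frac1{b^2}\Big)}.$$
Using $b^2\ge 2\log p/n_2$ bounds this by $C\sqrt{1/n_2+1/(2\log p)}$, which tends to $0$ since $n_2\to\infty$ and $p\to\infty$. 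This gives the Hamming-risk convergence, and the support-recovery risk follows from $\mathbf{P}_\beta(\hat\eta\ne\eta_\beta)\le\mathbf{E}_\beta|\hat\eta-\eta_\beta|$.

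The main obstacle, such as it is, lies in the first paragraph: one must recognize that the multiplier $\tau=\sigma\sqrt{2(p^{2/n_2}-1)}$ is tuned precisely so that $a\ge 2\tau$ holds under the stated condition (via \eqref{eq:a2}), which controls the signal term \emph{without knowing} $a$, and simultaneously so that $1+b^2=p^{2/n_2}$, which cancels the exponential factor $(1+b^2)^{-(n_2-1)/2}$ against the combinatorial factor $p$. Everything past that is a transcription of the proof of Proposition \ref{prop:ad}.
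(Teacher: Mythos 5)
Your proposal is correct and follows essentially the same route as the paper's proof: reduce via the argument of Theorem \ref{thm:suf1} with $\delta=1$, use the equivalence of the sample-size condition with \eqref{eq:a2} (i.e., $a\ge 2\tau$) to absorb the signal term into the noise term, and then apply Lemma \ref{lem:ck}, exploiting the calibration $1+b^2=p^{2/n_2}$ so that $(1+b^2)^{-(n_2-1)/2}=p^{-1+1/n_2}$ cancels the factor $p$. The only (immaterial) difference is cosmetic: the paper bounds the resulting quantity by $p^{-1+1/n_2}/\sqrt{2\log p}\le e^{1/C_0}p^{-1}/\sqrt{2\log p}$ using $n_2\ge C_0\log p$, whereas you rewrite it as $C\sqrt{1/n_2+1/(2\log p)}$; both conclude identically.
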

\begin{proof}
Acting as in the proof of Theorem \ref{thm:suf1} and choosing there $\delta = 1$ we get
$$  \underset{\beta \in \Omega^{p}_{s,a}}{\sup}\mathbf{E}_{\beta}\left| \hat{\eta} - \eta_{\beta}\right| \leq p\mathbf{P}\left( \sqrt{2}\sigma \left|\varepsilon \right|>t\left(\zeta\right) \right) + s \mathbf{P}\left( \sqrt{2}\sigma \left|\varepsilon \right|\ge\left(a\|\zeta\|-t\left(\zeta \right)\right)_{+} \right) +C_1p\left(\frac{s}{2p}\right)^{C_2s} 
$$
where $\varepsilon$ is a standard Gaussian random variable and $\zeta \sim \mathcal{N}\left(0,\mathbb{I}_{n_2}\right)$ is a standard Gaussian random vector in $\mathbb{R}^{n_2}$ independent of $\varepsilon$.
Since $n_2 \geq  \frac{2\log{p}}{\log\left(1+\frac{a^{2}}{8\sigma^{2}}\right)}$, we have \eqref{eq:a2}, which implies $a\|\zeta\| \geq 2t\left(\zeta\right)$. Therefore, 
 \begin{equation}\label{eq:th:ad} \underset{\beta \in \Omega^{p}_{s,a}}{\sup}\mathbf{E}_{\beta}\left| \hat{\eta} - \eta_{\beta}\right| \leq 2p\mathbf{P}\left( \sqrt{2}\sigma \left|\varepsilon \right|\ge t\left(\zeta\right) \right)  +C_1p\left(\frac{s}{2p}\right)^{C_2s}.
 \end{equation}
The second summand on the right hand side of \eqref{eq:th:ad} is treated in the same way as in 
Proposition \ref{prop:ad}. 
To bound  the first summand, we note that due to \eqref{eq:thres3},
$$
\mathbf{P}\left( \sqrt{2}\sigma \left|\varepsilon \right|\ge t\left(\zeta\right) \right)= 
\mathbf{P}\left( |Z|\ge\sqrt{n_2\big(p^{\frac{2}{n_{2}}}-1\big)} \right)
$$
where $Z$ is a Student random variable with $n_2$ degrees of freedom.  Using the inequalities $n_2\big(p^{\frac{2}{n_{2}}}-1\big)= n_2\big(\exp(2(\log p)/n_{2})-1\big)\ge 2\log p$, $n_2\ge C_0 \log p$, and Lemma \ref{lem:ck}
we find 
$$
\mathbf{P}\left( |Z|\ge\sqrt{n_2\big(p^{\frac{2}{n_{2}}}-1\big)} \right)\le \frac{p^{-1+1/n_2}}{\sqrt{2\log p}}\le \frac{p^{-1}\exp(1/C_0)}{\sqrt{2\log p}}.
$$
This implies that the first summand on the right hand side of \eqref{eq:th:ad} tends to 0 as $p\to \infty$.
\end{proof}

Thus, if only $\sigma$ is known while $a$ and $s$ are not, we can achieve exact recovery under the same condition as for the ML decoder (which is not computationally tractable and depends on $s$). Next, we show that, replacing $\sigma$ in \eqref{eq:thres3} by a suitable estimator, we can render the procedure completely adaptive to all parameters of the problem. 

Define $\hat \sigma>0$ by 
$$ 
\hat\sigma^2 = \frac{1}{n_{2}} \sum_{i \in \mathcal{D}_{2}}\Big(Y_{i}-\sum_{j=1}^{p}X_{ij}\hat{\beta}_{j}\Big)^{2},
$$
where $\hat{\beta}$ is the same Square-Root SLOPE estimator as in \eqref{eq:selec2} 
%
and consider the threshold function
\begin{equation}\label{eq:thres4}
    t(u)=\hat{\sigma}\sqrt{2\left(p^{\frac{2}{n_{2}}}-1\right)}\|u\|,\quad \forall u \in \mathbb{R}^{n_2}.
\end{equation}
We get the following result for the fully adaptive procedure corresponding to this threshold.
\begin{prop}
{Let the assumptions of Proposition \ref{prop:ad} be satisfied.} 
Let the threshold $t(\cdot)$ be defined in \eqref{eq:thres4}. Then, there exists a constant $\bar C_0>0$ such that the selector $\hat{\eta}$ defined in  \eqref{eq:selec1} - \eqref{eq:selec2} achieves exact recovery under both risks (Hamming and support recovery) if $n \geq 2\bigg({\bar C}_{0}s\log\big(\frac{ep}{s}\big) \vee \frac{2\log{p}}{\log\left(1+\frac{a^{2}}{16\sigma^{2}}\right)} \bigg).$
    \end{prop}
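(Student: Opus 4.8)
The plan is to follow the argument of Proposition~\ref{th:ad} almost verbatim, the only genuinely new difficulty being that the threshold~\eqref{eq:thres4} now contains the random quantity $\hat\sigma$, which is built from the \emph{same} subsample $\mathcal{D}_2$ that enters the statistics $\alpha_i$. I would therefore reduce everything to the situation of Proposition~\ref{th:ad} by first showing that $\hat\sigma$ is, with overwhelming probability, sandwiched between two deterministic multiples of $\sigma$, and then replacing the random threshold by those deterministic bounds on the corresponding good event.

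First I would condition on the first subsample $\mathcal{D}_1$ and work on the event $\mathbb{A}=\{\|\hat\beta-\beta\|\le\delta\sigma\}$, now with a fixed small $\delta$ (say $\delta=1/2$), whose probability is controlled by Proposition~\ref{prop:1}; the resulting constant $C_0/\delta^2$ is absorbed into $\bar C_0$. The key computation is the conditional law of $\hat\sigma$: since $Y_i-\sum_j X_{ij}\hat\beta_j=\sigma\xi_i+\sum_j X_{ij}(\beta_j-\hat\beta_j)$ and $X^{(2)}$ has i.i.d.\ standard Gaussian entries independent of $\hat\beta$, conditionally on $\mathcal{D}_1$ the residuals are i.i.d.\ $\mathcal{N}(0,\sigma_*^2)$ with $\sigma_*^2=\sigma^2+\|\hat\beta-\beta\|^2$, so that $n_2\hat\sigma^2/\sigma_*^2$ has a $\chi^2_{n_2}$ distribution. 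A standard chi-square deviation bound then provides a good event $\mathbb{G}=\{(1-\tau)\sigma_*^2\le\hat\sigma^2\le(1+\tau)\sigma_*^2\}$ with $\mathbf{P}(\mathbb{G}^c\mid\mathcal{D}_1)\le 2e^{-cn_2\tau^2}$, for a fixed tolerance (say $\tau=1/4$) chosen so that $(1+\tau)(1+\delta^2)\le 2$ and $1-\tau\ge 1/2$.

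Next I would rerun the per-coordinate bounds of Theorem~\ref{thm:suf1} on $\mathbb{A}\cap\mathbb{G}$. On $\mathbb{G}$ the threshold is bounded below by $\sqrt{1-\tau}\,\sigma_*\sqrt{2(p^{2/n_2}-1)}\|X_i\|$ and above by $\sqrt{1+\tau}\,\sigma_*\sqrt{2(p^{2/n_2}-1)}\|X_i\|$. The lower bound (with $\tau\le1/2$) lets me control each false positive by $\mathbf{P}(|Z|\ge\sqrt{n_2(p^{2/n_2}-1)})$, exactly the Student tail of Proposition~\ref{th:ad}. The upper bound, together with $\sigma_*^2\le(1+\delta^2)\sigma^2$ on $\mathbb{A}$ and the hypothesis $p^{2/n_2}-1\le a^2/(16\sigma^2)$ (the restatement of $n_2\ge 2\log p/\log(1+a^2/(16\sigma^2))$, cf.\ \eqref{eq:a2}), gives $t(X_i)\le a\|X_i\|/2$ because $(1+\tau)(1+\delta^2)\le 2$; hence $a\|X_i\|-t(X_i)\ge t(X_i)$ and each false negative is likewise controlled by the same Student tail. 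Replacing the random-threshold error event by its deterministic counterpart costs an extra $\mathbf{P}(\mathbb{G}^c\mid\mathcal{D}_1)$ per coordinate, so summing over the $p$ coordinates and taking expectation over $\mathcal{D}_1$ yields
$$\sup_{\beta\in\Omega^p_{s,a}}\mathbf{E}_\beta|\hat\eta-\eta_\beta|\le p\,\mathbf{P}\big(|Z|\ge\sqrt{n_2(p^{2/n_2}-1)}\big)+2p\,e^{-cn_2\tau^2}+p\,\mathbf{P}(\mathbb{A}^c).$$
The first term is handled exactly as in Proposition~\ref{th:ad} via $n_2(p^{2/n_2}-1)\ge 2\log p$, $n_2\ge\bar C_0\log p$ and Lemma~\ref{lem:ck}, and tends to $0$; the last term is $p\,C_1(s/2p)^{C_2 s}\to 0$ as in Proposition~\ref{prop:ad}. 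For the middle term I would use $n_2\ge\bar C_0 s\log(ep/s)\ge\bar C_0\log p$ (since $s\log(ep/s)$ is increasing in $s$ with value $\ge\log p$ at $s=1$), so that $p\,e^{-cn_2\tau^2}\le p^{1-c\bar C_0\tau^2}\to 0$ once $\bar C_0$ is large enough; the same $\bar C_0$ absorbs $C_0/\delta^2$ and makes the Student term vanish. The bound on $\mathbf{P}_\beta(\hat\eta\ne\eta_\beta)$ then follows from a union bound, as in the previous propositions.

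The step I expect to be delicate is the third one: because $\hat\sigma$ and the $\alpha_i$ share the randomness of $\mathcal{D}_2$, the threshold is not independent of the variables being thresholded, so one cannot directly invoke the clean conditional normal-means reduction of Theorem~\ref{thm:suf1}. The device that removes this coupling is precisely the restriction to $\mathbb{G}$: on $\mathbb{G}$ the data-dependent threshold is squeezed between deterministic thresholds, after which the $i$-th error event depends on $\mathcal{D}_2$ only through $(\alpha_i,\|X_i\|)$ and the reduction proceeds as in Theorem~\ref{thm:suf1}; the price $\mathbf{P}(\mathbb{G}^c)$ is exponentially small in $n_2\gtrsim\log p$ and therefore survives the union over the $p$ coordinates.
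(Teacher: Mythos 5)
Your proposal is correct and follows essentially the same route as the paper: the paper's proof also defines a good event (its $\mathbb{B}$ is exactly your $\mathbb{A}\cap\mathbb{G}$, with $\delta=1$ and tolerance $1/2$), uses the conditional law $\hat\sigma^2 \sim \sigma_*^2\,\chi^2(n_2)/n_2$ given $\mathcal{D}_1$ with the chi-square concentration of Lemma~\ref{lem:chi2}, sandwiches the data-dependent threshold between deterministic multiples of $\sigma_*\|X_i\|\big(p^{2/n_2}-1\big)^{1/2}$ on that event, and reduces to the Student tail bounds of Proposition~\ref{th:ad}. Your explicit discussion of the $\hat\sigma$--$\alpha_i$ coupling makes transparent a point the paper handles implicitly, but the argument is the same.
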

\begin{proof}
Define the random event
$$ \mathbb{B} = \left\{ \|\hat{\beta}-\beta \|^{2} \leq \sigma^{2}\right\}\cap\left\{ \left|\frac{\hat{\sigma}^{2}}{\|\hat{\beta}-\beta\|^{2}+\sigma^{2}}-1\right|\leq \frac{1}{2} \right\}. $$
We have
$$\underset{\beta \in \Omega_{s,a}}{\sup}\mathbf{E}_{\beta}\left| \hat{\eta} - \eta_{\beta} \right| \leq \underset{\beta \in \Omega_{s,a}}{\sup}\mathbf{E}_{\beta}\big( \left| \hat{\eta} - \eta_{\beta} \right|\mathbf{1}\{ \mathbb{B}\} \big) + p\underset{\beta \in \Omega_{s,a}}{\sup}\mathbf{P}_{\beta}\left( \mathbb{B}^{c}\right).$$
To control the second term on the right hand side, note that, 
conditionally on $\hat{\beta}$, the estimator $ \hat{\sigma}^{2}$ has the same distribution as
$$ \frac{\|\hat{\beta}-\beta\|^{2}+\sigma^{2}}{n_{2}} \chi^{2}(n_2),$$
where  $\chi^{2}(n_{2})$ is a chi-squared random variable with $n_{2}$ degrees of freedom. We will use the following lemma, cf. \cite{cgpt} or  \cite{lptv}.
\begin{lem}\label{lem:chi2} For any $N\ge 1$ and $t>0$, 
$$
\mathbf{P}(\left|\chi^{2}(N)/N-1\right|\ge t) \le 2\exp\left(-\frac{t^2N}{4(1+t)}\right),
$$
where $\chi^{2}(N)$ is a chi-squared random variable with $N$ degrees of freedom. 
\end{lem}
From Lemma \ref{lem:chi2} with $t=1/2$ and Proposition \ref{prop:1} we get 
$$  p\underset{\beta \in \Omega_{s,a}}{\sup}\mathbf{P}_{\beta}\left( \mathbb{B}^{c}\right) \leq C_1p\left(\frac{s}{2p}\right)^{C_2s}+2p e^{-(n_2-1)/24}. $$
Here, $p\big(\frac{s}{2p}\big)^{C_2s} \to 0$ as $p\to \infty$ (cf. the proof of 
Proposition \ref{prop:ad}), while  $p e^{-n_2/24}  \to 0$ as $p\to \infty$ provided that we choose $\bar C_0> 24$. 

To evaluate $\Gamma:=\mathbf{E}_{\beta}\big( \left| \hat{\eta} - \eta_{\beta} \right|\mathbf{1}\{ \mathbb{B}\} \big)$, we act similarly to the proof of Theorem \ref{thm:suf1}. 
We have 
\begin{eqnarray*} 
\Gamma 
&= &
\sum_{i:\beta_{i}=0}\mathbf{P}_{\beta}(\{|\alpha_{i}|>t(X_i)\} \cap  \mathbb{B} ) +
\sum_{i:\beta_{i}\neq 0}\mathbf{P}_{\beta}(\{|\alpha_{i}|\le t(X_i)\} \cap  \mathbb{B}). 
\end{eqnarray*}
Set 
$\sigma_{*}=\sqrt{\|\hat{\beta}-\beta\|^{2}+\sigma^{2}} 
.$
On the event $\mathbb{B}$, we have $ \sigma^{2}_*\leq 2\hat{\sigma}^{2} \leq 3 \sigma^{2}_*$ and $\sigma^{2}_*\leq 2\sigma^{2}$. The last inequality and the assumption on $n_{2}$ imply that $a\ge 2\sqrt{2}\sigma^{*}(p^{{2}/{n_2}}-1)^{1/2} $. Using these remarks and the fact that, conditionally on $\hat{\beta}$ and $X_i$, the variable $\alpha_{i}$ has the same distribution as \eqref{alphai} we obtain, for all $i$ such that $\beta_{i}=0$, 
\begin{eqnarray*}
\mathbf{P}_{\beta}(\{|\alpha_{i}|>t(X_i)\} \cap  \mathbb{B} ) 
&\le& \mathbf{P}_{\beta}(\{|\alpha_{i}|>\sigma_* \|X_{i}\|(p^{{2}/{n_2}}-1)^{1/2} \} \cap  \mathbb{A} ) 
\\
&\le& \mathbf{P}_{\beta}(|\varepsilon|>\|X_{i}\|(p^{{2}/{n_2}}-1)^{1/2} \}),
\end{eqnarray*}
where $ \mathbb{A} = \big\{ \|\hat{\beta}-\beta \|^{2} \leq \sigma^{2}\big\}$ and $\varepsilon$ is a standard Gaussian random variable independent of $\|X_{i}\|$. Similarly, for all $i$ such that $\beta_{i}\ne 0$ (and thus 
$|\beta_{i}|\ge a$) we have
\begin{eqnarray*}
\mathbf{P}_{\beta}(\{|\alpha_{i}|\le t(X_i)\} \cap  \mathbb{B} ) 
&\le& \mathbf{P}_{\beta}(\{|\alpha_{i}|\le \sqrt{3}\sigma_* \|X_{i}\|(p^{{2}/{n_2}}-1)^{1/2} \} \cap  \mathbb{A} ) 
\\
&\le& \mathbf{P}_{\beta}(\sigma_*|\varepsilon|\ge a\|X_{i}\|-\sqrt{3}\sigma_*\|X_{i}\|(p^{{2}/{n_2}}-1)^{1/2})
\\
&\le& \mathbf{P}_{\beta}(|\varepsilon|\ge (2\sqrt{2}-\sqrt{3})\|X_{i}\|(p^{{2}/{n_2}}-1)^{1/2}).
\end{eqnarray*}
Combining the above inequalities we find
\begin{eqnarray} \label{gg}
\Gamma \leq p\mathbf{P}\left(|Z| \geq (p^{{2}/{n_2}}-1)^{1/2} \right) + s \mathbf{P}\left(|Z| \geq (2\sqrt{2}-\sqrt{3})(p^{{2}/{n_2}}-1)^{1/2}  \right),
\end{eqnarray}
where $Z$ is a Student random variable with $n_{2}$ degrees of freedom. Finally, we apply the same argument as in the proof of Proposition \ref{th:ad} to obtain that the right hand side of \eqref{gg} vanishes as $p\to\infty$.
\end{proof}

\section{Generalization to sub-Gaussian distributions}\label{sec:sub}

In this section, we generalize our procedure to the case where both the design (sensing) matrix $X$ and the noise $\xi$ are sub-Gaussian. 
Recall that, for given $\sigma_{\zeta}>0$, a random variable $\zeta$ is called $\sigma_{\zeta}$-sub-Gaussian if
$$
\E \exp(t \zeta) \le \exp(\sigma_{\zeta}^2t^2/2), \quad \forall t\in \mathbb{R}. 
$$
In particular, this implies that $\zeta$ is centered. 

In this section, we assume that both $X$ and $\xi$ have i.i.d. sub-Gaussian entries, and as above, $X$ is independent of $\xi$. 

The estimation part of our procedure (cf. Proposition \ref{prop:1}) extends to sub-Gaussian designs as follows. 

{
\begin{prop}\label{prop:subGaussian}
Assume that the entries of matrix $X$ are i.i.d. $\sigma_X$-sub-Gaussian random variables, the entries of the noise $\xi$ are i.i.d. $\sigma$-sub-Gaussian random variables for some $\sigma >0$, $\mathbf{E}(X_{ij}^2)=1$ for all entries $X_{ij}$ of matrix $X$, and $X$ is independent of $\xi$.
Let $\hat\beta$ be the Square-Root SLOPE estimator  defined in Section \ref{sec:contr} with large enough $A>0$ depending only on~$\sigma, \sigma_X$. There exist constants $C_{0},C_{1},C_{2}>0$ that can depend only on $\sigma_X$, such that for all $\delta \in (0,1]$ and $n_1 > \frac{C_{0}}{\delta^2}s\log{\left(\frac{ep}{s}\right)}$ we have
$$ \underset{|\beta|_{0}\leq s}{\sup} \mathbf{P}_{\beta}\left( \|\hat{\beta} - \beta\| \geq \delta\sigma  \right) \leq C_1 \Big(\frac{s}{2p}\Big)^{C_{2}s}.$$
\end{prop}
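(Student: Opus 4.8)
The plan is to reproduce the deterministic analysis of the Square-Root SLOPE estimator carried out for the Gaussian case in \cite{deru}, and to replace its two probabilistic inputs — a weighted restricted eigenvalue (WRE) bound for the design and a bound on the stochastic part of the subgradient — by versions valid for sub-Gaussian ensembles. Working with the first subsample, write $u = \hat\beta - \beta$ and recall that the pivotal nature of the estimator makes the subgradient of the data-fitting term at $\beta$ equal to $X^{(1)\top}\xi/(\sqrt{n_1}\,\|\xi\|)$, which is invariant under rescaling of the noise and hence independent of $\sigma$; this scale-freeness is precisely what will produce adaptivity to $\sigma$. Throughout, the constants $C_0, C_1, C_2$ are traced to depend only on $\sigma_X$, while the calibration level $A$ is allowed to depend on both $\sigma$ and $\sigma_X$.

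First I would control the stochastic term. Conditionally on $\xi$, each coordinate $(X^{(1)\top}\xi)_k = \sum_i \xi_i X_{ik}$ is a sum of independent $\sigma_X$-sub-Gaussian variables and is therefore $\sigma_X\|\xi\|$-sub-Gaussian, so that $(X^{(1)\top}\xi)_k/\|\xi\|$ is $\sigma_X$-sub-Gaussian uniformly in $k$ and independently of the noise scale. Applying a union bound to the order statistics of these coordinates shows that, with probability at least $1 - \exp(-c\,s\log(ep/s))$, the sorted-$\ell_\infty$ dual norm of $X^{(1)\top}\xi/(\sqrt{n_1}\,\|\xi\|)$ is dominated by $\tfrac12\lambda_j = \tfrac{A}{2}\sqrt{\log(2p/j)/n_1}$ for all $j$, provided $A$ is chosen large enough depending only on $\sigma, \sigma_X$. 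This is the sub-Gaussian analogue of the $A > 16 + 4\sqrt2$ calibration of \cite{deru}, with the Gaussian tail replaced by the sub-Gaussian tail; I would also record the sub-exponential concentration of $\|\xi\|^2/n_1$ (used to bound, and bound below, the residual norm in the denominator) with the same form of failure probability.

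Next I would establish the WRE condition: for $n_1 \gtrsim s\log(ep/s)$, the rescaled design $X^{(1)}/\sqrt{n_1}$ satisfies $\|X^{(1)}v\|/\sqrt{n_1} \ge \kappa\|v\|$ for every $v$ in the sorted-$\ell_1$ cone associated with any support of size $\le s$, with $\kappa>0$ depending only on $\sigma_X$ and failure probability at most $\exp(-c\,n_1) \le \exp(-c'\,s\log(ep/s))$. Unlike the Gaussian case I cannot invoke Gordon's minimax theorem, so I would instead use a matrix deviation inequality for sub-Gaussian rows (generic chaining, or Mendelson's small-ball method) together with the fact that the effective dimension of the sorted-$\ell_1$ cone is of order $s\log(p/s)$. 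On the intersection of the two good events I would then run the standard deterministic argument of \cite{deru}: the optimality inequality combined with convexity forces $u$ into the cone, the stochastic bound absorbs half of the penalty, and the WRE converts the residual penalty bound into $\|\hat\beta - \beta\| \le C\sigma\sqrt{s\log(ep/s)/n_1}$ with $C$ depending only on $\sigma_X$. Choosing $C_0$ large enough makes the right-hand side $\le \delta\sigma$ whenever $n_1 > \tfrac{C_0}{\delta^2}s\log(ep/s)$, and collecting the two failure probabilities yields the stated bound $C_1(s/2p)^{C_2 s}$; since all of these events depend only on $X$ and $\xi$ and not on which $s$-sparse vector is the truth, the supremum over $\{|\beta|_0 \le s\}$ is immediate.

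The main obstacle is the WRE step. Establishing the restricted eigenvalue bound for a sub-Gaussian rather than Gaussian design, with a failure probability small enough to match the target $\exp(-c\,s\log(ep/s))$ and with a constant $\kappa$ depending only on $\sigma_X$, is the delicate point, because one loses the rotational invariance and exact chi-squared concentration that make the Gaussian argument sharp; the replacement requires a uniform lower deviation bound over the sorted-$\ell_1$ cone obtained by chaining. The control of the stochastic term and the deterministic conversion are, by comparison, routine once the conditioning on $\xi$ and the sub-exponential concentration of $\|\xi\|^2/n_1$ are in place.
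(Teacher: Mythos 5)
Your overall architecture --- a restricted-eigenvalue event for the sub-Gaussian design obtained by small-ball/chaining arguments, intersected with a stochastic-term event, followed by the deterministic Square-Root SLOPE analysis on the good event --- is exactly the paper's architecture: the paper cites Theorem 8.3 of \cite{bellec} for the $WRE(s,20)$ event and Proposition 4 of \cite{collier2018} for the error bound on that event, and your WRE step and deterministic conversion are sound. The gap is in your stochastic-term step, and it is not a technicality: it makes the claimed probability unattainable by your route.

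You assert that a union bound over the order statistics of the coordinates of $W=X^{(1)\top}\xi/(\sqrt{n_1}\,\|\xi\|)$ gives, with probability at least $1-\exp(-c\,s\log(ep/s))$, the domination of the $j$-th largest coordinate by $\tfrac12\lambda_j\sqrt{n_1}$ for all $j$. That event cannot have such high probability. Already the constraint at $j=1$ requires $\max_k|W_k|\le \tfrac{A}{2}\sqrt{\log(2p)}$; when $X$ is Gaussian (a case covered by the proposition) the $W_k$ are i.i.d.\ standard Gaussian conditionally on $\xi$, so this single constraint fails with probability at least of order $p\,(2p)^{-A^{2}/8}/\sqrt{\log(2p)}$, i.e.\ polynomial in $1/p$ with a fixed exponent once $A$ is fixed (and $A$ may only depend on $\sigma,\sigma_X$). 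The target $C_1(s/2p)^{C_2 s}=C_1e^{-C_2 s\log(2p/s)}$ is super-polynomially small whenever $s\to\infty$ (e.g.\ $s\asymp\sqrt{p}$), so no choice of constants repairs this; your argument proves only a bound of order $(2p)^{-c_A}$ uniformly in $s$, which is strictly weaker than the statement. The same objection applies to the weaker partial-sum domination $\sum_{k\le j}W^{*}_k\le \tfrac{\sqrt{n_1}}{2}\sum_{k\le j}\lambda_k$, whose $j=1$ case is again the maximum. The mechanism that actually yields probability $1-e^{-c\,s\log(2p/s)}$ --- and this is the content of the results the paper invokes --- is a bound on the whole process $u\mapsto \xi^{\top}X^{(1)}u$: with that probability, for every $u\in\mathbb{R}^p$,
$$ \frac{1}{n_1}\,\xi^{\top}X^{(1)}u \ \le\ C\sigma\,\max\left\{ \sqrt{\frac{s\log(2p/s)}{n_1}}\;\frac{\|X^{(1)}u\|}{\sqrt{n_1}},\ \ \sum_{j=1}^{p}\sqrt{\frac{\log(2p/j)}{n_1}}\;u^{*}_j \right\}, $$
where $(u^{*}_1,\dots,u^{*}_p)$ is the non-increasing rearrangement of $(|u_1|,\dots,|u_p|)$. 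The point is that a single large coordinate ($u=e_k$) is charged to the first term, whose threshold $\sqrt{s\log(2p/s)}$ is much larger than $\sqrt{\log(2p)}$; the event then only concerns deviations of the supremum of the process of the order of its mean, which by concentration of suprema (Borell--TIS in the Gaussian case, generic chaining plus Bernstein in the sub-Gaussian case) cost $e^{-c\,s\log(2p/s)}$. To complete your proof as stated you must replace the coordinate-wise domination step by this process-level bound (or cite Theorem 4.1/8.3 of \cite{bellec} and Proposition 4 of \cite{collier2018}, as the paper does); the rest of your plan then goes through.
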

}
The proof of this proposition is based on combination of arguments from \cite{bellec} and \cite{collier2018}. 
It is given in the Appendix.

We will also need the following lemma proved   in the Appendix.
\begin{lem}\label{lem:3}
Let $U,V$ be two independent random vectors in $\mathbb{R}^{n}$, such that the entries of $U$ are i.i.d. random variables and the entries of $V$ are i.i.d. $\sigma$-sub-Gaussian random variables for some $\sigma$. Assume that $\mathbf{E}(U_i^2)=1$ and $ \mathbf{E}(U_i^4)\le \sigma_1^4$ for all components $U_i$ of $U$, where $\sigma_1>0$. Then, for any $t>0$,
$$ \mathbf{P}\left( \frac{\left|U^{\top}V\right|}{\|U\|^{2}} \geq t\right) \leq 2\exp\left(-\frac{nt^2}{8\sigma^2}\right) + \exp\left(-\frac{9n}{32\sigma_1^4}\right).$$
\end{lem}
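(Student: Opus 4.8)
The plan is to prove Lemma \ref{lem:3} by a two-event decomposition: one event controls the denominator $\|U\|^2$ away from being too small, and the other controls the sub-Gaussian inner product $U^\top V$ on the complementary good event. The key structural observation is that, conditionally on $U$, the scalar $U^\top V = \sum_i U_i V_i$ is a sum of independent $\sigma|U_i|$-sub-Gaussian random variables (since each $V_i$ is $\sigma$-sub-Gaussian and $U$ is independent of $V$), hence $U^\top V$ is itself $\sigma\|U\|$-sub-Gaussian given $U$. This lets us apply the standard sub-Gaussian tail bound conditionally and then average over $U$.

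First I would fix a threshold for the denominator, say the event $\mathbb{D}=\{\|U\|^2\ge n/2\}$. On $\mathbb{D}^c=\{\|U\|^2<n/2\}$ I would simply bound the probability $\mathbf{P}(\mathbb{D}^c)=\mathbf{P}(\sum_i(1-U_i^2)>n/2)$. Here the $1-U_i^2$ are i.i.d. centered (since $\mathbf{E}(U_i^2)=1$) with finite fourth moment controlled by $\mathbf{E}(U_i^4)\le\sigma_1^4$, so a Bernstein- or second-moment-type argument gives a bound of the form $\exp(-cn/\sigma_1^4)$. The constants $9/32$ and $\sigma_1^4$ in the stated bound suggest exactly such a variance-based concentration for $\|U\|^2/n$ around $1$; I would match the exponent by using the fact that $\mathrm{Var}(U_i^2)=\mathbf{E}(U_i^4)-1\le\sigma_1^4-1\le\sigma_1^4$ (or a direct Chebyshev/Bernstein computation) to produce the $\exp(-9n/(32\sigma_1^4))$ term.

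Next, on the good event $\mathbb{D}$, I would write
$$
\mathbf{P}\Big(\frac{|U^\top V|}{\|U\|^2}\ge t,\ \mathbb{D}\Big)\le \mathbf{P}\big(|U^\top V|\ge t\|U\|^2,\ \|U\|^2\ge n/2\big).
$$
Conditioning on $U$ and using that $U^\top V$ is $\sigma\|U\|$-sub-Gaussian, the standard two-sided tail bound gives $\mathbf{P}(|U^\top V|\ge t\|U\|^2\mid U)\le 2\exp\big(-t^2\|U\|^4/(2\sigma^2\|U\|^2)\big)=2\exp\big(-t^2\|U\|^2/(2\sigma^2)\big)$. On $\{\|U\|^2\ge n/2\}$ this is at most $2\exp(-nt^2/(4\sigma^2))$, which is even smaller than the claimed $2\exp(-nt^2/(8\sigma^2))$; I would then take expectation over $U$ restricted to $\mathbb{D}$ and absorb everything into the stated constants. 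Combining the two events via a union bound yields the result.

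The main obstacle is matching the precise numerical constants in the exponents, particularly the $9/(32\sigma_1^4)$ in the denominator-control term, since the clean sub-Gaussian tail for the numerator leaves plenty of slack but the fourth-moment concentration of $\|U\|^2$ requires a careful choice of the Bernstein or truncation parameters. I do not expect any conceptual difficulty beyond that bookkeeping: the conditional sub-Gaussianity of $U^\top V$ given $U$ is the one genuinely useful idea, and everything else is standard concentration that I would carry out only to the level of detail needed to recover the stated constants.
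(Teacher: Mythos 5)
Your proposal follows exactly the paper's route: conditional on $U$, the inner product $U^\top V$ is $\sigma\|U\|$-sub-Gaussian, so one splits on an event controlling $\|U\|^2$ from below and bounds the complement by a one-sided, second-moment concentration bound for $\frac1n\sum_i U_i^2$. The paper implements this with the event $\{\|U\|\ge \sqrt{n}/2\}$ and bounds its complement via a lower-tail inequality for sums of nonnegative variables with bounded second moments (Wegkamp's Proposition 2.6, applied with $Z_i=U_i^2$, $\mu_i=1$, $v^2=\sigma_1^4$, $x=3/4$), giving exactly $e^{-n(3/4)^2/(2\sigma_1^4)}=e^{-9n/(32\sigma_1^4)}$, while on the good event $\|U\|^2\ge n/4$ the conditional tail gives exactly $2e^{-nt^2/(8\sigma^2)}$.

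There is, however, a concrete quantitative slip in your version: you take the threshold $\mathbb{D}=\{\|U\|^2\ge n/2\}$. Then the deviation on $\mathbb{D}^c$ is only $x=1/2$, and the best exponent any second-moment-based one-sided bound can deliver is $\frac{n x^2}{2\sigma_1^4}=\frac{n}{8\sigma_1^4}=\frac{4n}{32\sigma_1^4}$, which is strictly weaker than the claimed $\frac{9n}{32\sigma_1^4}$; no amount of bookkeeping with that threshold recovers the stated constant. The fix is precisely the paper's choice: lower the threshold to $\|U\|^2\ge n/4$, which trades the unnecessary slack in your numerator term (your $2e^{-nt^2/(4\sigma^2)}$ relaxes to the claimed $2e^{-nt^2/(8\sigma^2)}$) for the larger deviation $x=3/4$ needed in the denominator term. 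Two smaller points: plain Chebyshev, which you mention as an alternative, gives only polynomial decay $\sigma_1^4/(nx^2)$ and cannot produce the exponential term, so you genuinely need the Bernstein-type lower-tail bound for nonnegative variables (this is where non-negativity of $U_i^2$ enters); and that bound uses $\mathbf{E}(U_i^4)\le\sigma_1^4$ directly as the second-moment proxy rather than $\Var(U_i^2)$, though either suffices here.
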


We are now ready to state a general result for sub-Gaussian designs.

\begin{theorem}\label{thm:subgaussian}
Let the assumptions of Proposition \ref{prop:subGaussian} be satisfied.
Let $n\ge 4$ be an even integer and $2(1\vee 1/C_2)\le s<p$. Set $n_{1} = n_{2} = n/2$, and let the threshold $t(\cdot)$ be defined in \eqref{eq:thres2}. Then, there exists a constant $C>0$ such that the selector $\hat{\eta}$ defined in  \eqref{eq:selec1} - \eqref{eq:selec2} achieves exact recovery under both risks (Hamming and support recovery) if $n \geq C \left( s\log\left(\frac{ep}{s}\right) \vee  \frac{\sigma^{2}\log{p}}{a^{2}} \right)$.
    \end{theorem}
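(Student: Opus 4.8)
The plan is to mirror the Gaussian analysis of Proposition~\ref{prop:ad}, conditioning on the first subsample ${\mathcal D}_1$ so that $\hat\beta$ is fixed and independent of ${\mathcal D}_2$, and then to replace every Gaussian tail computation by a sub-Gaussian concentration argument supplied by Lemma~\ref{lem:3}. First I would define the good event $\mathbb{A} = \{\|\hat\beta-\beta\|\le \sigma\}$ and split the risk as
\begin{equation*}
\underset{\beta \in \Omega^{p}_{s,a}}{\sup}\mathbf{E}_{\beta}|\hat\eta - \eta_{\beta}|
\le \underset{\beta \in \Omega^{p}_{s,a}}{\sup}\mathbf{E}_{\beta}\big(|\hat\eta - \eta_{\beta}|\,\mathbf{1}\{\mathbb{A}\}\big) + p\,\underset{\beta \in \Omega^{p}_{s,a}}{\sup}\mathbf{P}_{\beta}(\mathbb{A}^{c}).
\end{equation*}
By Proposition~\ref{prop:subGaussian} (applied with $\delta=1$, which is legitimate since $n_1=n/2\ge C_0 s\log(ep/s)$ under the stated condition on $n$), the second term is bounded by $C_1 p (s/2p)^{C_2 s}$, which tends to $0$ as $p\to\infty$ exactly as in the proof of Proposition~\ref{prop:ad}.

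For the first term I would decompose it over the false positives ($\beta_i=0$) and false negatives ($\beta_i\ne0$), as in Theorem~\ref{thm:suf1}, writing each contribution in terms of the de-biased statistic $\alpha_i$ from \eqref{alphai0}. The key structural observation is that, conditionally on $\hat\beta$ and the column $X_i$, the noise part of $\alpha_i$ is $\|X_i\|^{-1} X_i^{\top} V$ with $V = \sum_{j\ne i}X_j(\beta_j-\hat\beta_j)+\sigma\xi$ a sub-Gaussian vector independent of $X_i$, whose sub-Gaussian parameter on the event $\mathbb{A}$ is controlled by $\sigma_*=(\sigma^2+\|\hat\beta-\beta\|^2)^{1/2}\le \sqrt2\,\sigma$. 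Thus Lemma~\ref{lem:3} (with $U=X_i$, so that $\mathbf{E}(U_i^2)=1$ and the fourth moment $\sigma_1^4$ is a constant depending only on $\sigma_X$) gives, for the false positives, a bound of the form $2\exp(-n_2 t^2/(8\sigma_*^2))+\exp(-9n_2/(32\sigma_1^4))$ with threshold level $t = (p^{2/n_2}-1)^{1/2}$ coming from \eqref{eq:thres2} after the reduction $a\|\zeta\|\ge 2t(\zeta)$ that the condition on $n$ guarantees. The false-negative probabilities are handled identically since $|\beta_i|\ge a$ lets me transfer the threshold to the noise. Summing $p$ copies of the first tail and $s$ copies of the second, the condition $n\ge C\,\sigma^2\log p/a^2$ forces $n_2 t^2/\sigma_*^2 \gtrsim \log p$, so the sum vanishes; the residual $\exp(-9n_2/(32\sigma_1^4))$ terms vanish because $n\ge C s\log(ep/s)\ge C\log p$.

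The main obstacle I anticipate is that Lemma~\ref{lem:3} is a two-sided deviation bound with an \emph{additive} failure term $\exp(-9n/(32\sigma_1^4))$, rather than a clean Student-type tail as in the Gaussian Lemma~\ref{lem:ck}; I have to check that, after multiplying the false-positive bound by $p$ and the false-negative bound by $s$, \emph{both} the genuine deviation term and this additive term are driven to zero simultaneously. The deviation term needs the sample-size condition $n\gtrsim \sigma^2\log p/a^2$, while the additive term only needs $n\gtrsim\log p$, which is implied by $n\gtrsim s\log(ep/s)$; so the two requirements are compatible and yield precisely the stated threshold $n\ge C(s\log(ep/s)\vee \sigma^2\log p/a^2)$. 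A secondary point to verify carefully is that the reduction $a\|X_i\|\ge 2t(X_i)$ used to move the threshold onto the noise holds with the deterministic threshold \eqref{eq:thres2}: this follows because the condition on $n_2$ is equivalent to $a^2\ge 8\sigma^2(p^{2/n_2}-1)$ on the event $\mathbb{A}$ where $\sigma_*^2\le 2\sigma^2$, exactly mirroring \eqref{eq:a2}. Once these book-keeping matters are settled, the constant $C$ can be chosen depending only on $\sigma,\sigma_X$, completing the argument.
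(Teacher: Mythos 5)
Your overall architecture is the paper's: condition on ${\mathcal D}_1$, split on the event $\mathbb{A}=\{\|\hat\beta-\beta\|\le\sigma\}$, control $\mathbf{P}_\beta(\mathbb{A}^c)$ via Proposition \ref{prop:subGaussian} with $\delta=1$, decompose the Hamming risk over false positives and false negatives as in Theorem \ref{thm:suf1}, and apply Lemma \ref{lem:3} with $U=X_i$ and $V=\sigma\xi+\sum_{j\ne i}X_j(\beta_j-\hat\beta_j)$, which is $\sqrt{2}\sigma$-sub-Gaussian on $\mathbb{A}$. The gap is in the deviation level you feed into Lemma \ref{lem:3}. The threshold \eqref{eq:thres2} is $t(u)=a\|u\|/2$, so the event $\{|\alpha_i|>t(X_i)\}$ \emph{is}, with no reduction whatsoever, the event $\{|U^{\top}V|/\|U\|^{2}>a/2\}$ (and for false negatives, $|\beta_i|\ge a$ gives $\{|\alpha_i|\le t(X_i)\}\subset\{|U^{\top}V|/\|U\|^{2}\ge a/2\}$). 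The paper applies Lemma \ref{lem:3} at level $a/2$, getting $2\exp\left(-n_2a^2/(64\sigma^2)\right)+\exp\left(-9n_2/(32\sigma_1^4)\right)$ on $\mathbb{A}$; since the hypothesis $n\ge C\sigma^{2}\log p/a^{2}$ carries an adjustable constant, the exponent satisfies $n_2a^2/(64\sigma^2)\ge (C/128)\log p$, which beats the union bound over $p$ coordinates once $C$ is large. This adjustability is exactly what your version lacks.

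Instead, you import the logic of Propositions \ref{prop:ad} and \ref{th:ad}: you insist on verifying a reduction $a\|X_i\|\ge 2t(X_i)$ (with \eqref{eq:thres2} this is the identity $a\|X_i\|=2t(X_i)$; the reduction is only relevant for the $\sigma$-based threshold \eqref{eq:thres3}), and you then apply Lemma \ref{lem:3} at the level $t\asymp\sigma\sqrt{p^{2/n_2}-1}$. That step fails quantitatively. Since $n_2\big(p^{2/n_{2}}-1\big)=n_2\big(e^{2(\log p)/n_2}-1\big)$ decreases to $2\log p$ as $n_2$ grows, the exponent $n_2t^2/(8\sigma_*^2)$ is at most $\big(\tfrac12+o(1)\big)\log p$ for large $n_2$, \emph{no matter how large} the constant $C$ in the sample-size condition is; hence $p\exp\left(-n_2t^2/(8\sigma_*^2)\right)\ge p^{1/2-o(1)}\to\infty$, and the theorem's hypothesis only lower-bounds $n$, so the proof must cover this regime. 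Your assertion that ``$n\ge C\sigma^{2}\log p/a^{2}$ forces $n_2t^2/\sigma_*^2\gtrsim\log p$, so the sum vanishes'' is a non sequitur: a lower bound $c_0\log p$ with a fixed constant $c_0\le 2$ does not survive division by $8$ followed by multiplication by $p$. This is precisely why the Gaussian Proposition \ref{th:ad} can run with the adaptive threshold (the sharp Student tail of Lemma \ref{lem:ck} gives $p^{-1+1/n_2}/\sqrt{2\log p}$), while the sub-Gaussian Theorem \ref{thm:subgaussian} must use the $a$-dependent threshold \eqref{eq:thres2} and absorb the lossy constants of Lemma \ref{lem:3} into a large $C$. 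The fix is simple and turns your argument into the paper's proof: drop the reduction and apply Lemma \ref{lem:3} directly at level $a/2$.
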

\begin{proof}
We act similarly to the proof of Theorem \ref{thm:suf1} where we set $\delta=1$ and $t(X_{i}) = \frac{a}{2}\|X_{i}\| $. 
Then, for all $i$ such that $\beta_i=0$, we have
$$
\mathbf{P}_{\beta}(|\alpha_{i}|>t(X_i)| {\mathcal D}_1)\le \mathbf{P}\bigg(\frac{\left|U^{\top}V\right|}{\|U\|^2} >\frac{a}{2}  \Big| \, {\mathcal D}_1\bigg),
$$
where $U=X_{i}$ and $V=Y - \sum_{j\neq i}X_{j}\hat{\beta}_{j}=\sigma\xi+ \sum_{j\neq i}X_{j}({\beta}_{j} -\hat{\beta}_{j})$. For fixed $\hat{\beta}$, the components of $V$ are i.i.d. $\sigma_*$-sub-Gaussian with $\sigma_*= (\sigma^{2}+ \|\hat{\beta}-\beta\|^{2})^{1/2}$. In particular, for fixed $\hat{\beta}$ on the event  $ \mathbb{A} = \{ \|\hat{\beta}-\beta \|^{2} \leq \sigma^{2}  \},$ they are $\sqrt{2}\sigma$-sub-Gaussian.
Thus, from Lemma \ref{lem:3} we obtain that there exists an absolute constant $c>0$ such that, for all $i$ with $\beta_i=0$,
$$ \mathbf{P}_{\beta}(\{ |\alpha_{i}|>t(X_i)\}  \cap \mathbb{A} ) \le 
2\exp\Big(-c n_{2}\Big(\frac{a^{2}}{\sigma^{2}}  \wedge 1\Big)\Big).$$
The same bound holds for $ \mathbf{P}_{\beta}(\{ |\alpha_{i}|\le t(X_i)\}  \cap \mathbb{A} ) $ for all $i$ such that $\beta_i\ne 0$.  The rest of the proof follows the same lines as the proof of Theorem \ref{thm:suf1} using Proposition \ref{prop:subGaussian} to evaluate $ \mathbf{P}_{\beta}(\mathbb{A}^c)$. This yields the bound
$$  \underset{\beta \in \Omega^{p}_{s,a}}{\sup}\mathbf{E}_{\beta}\left| \hat{\eta} - \eta_{\beta} \right| \leq  4p\exp\Big(-c n_{2}\Big(\frac{a^{2}}{\sigma^{2}}  \wedge 1\Big)\Big) + C_1 p\Big(\frac{s}{2p}\Big)^{C_{2}s}.$$
The second summand on the right hand side of this inequality vanishes as $p\to \infty$ as shown in the proof of
Proposition \ref{prop:ad}. 
The second summand vanishes as $p\to \infty$ if $n_2 > c'\big(\frac{\sigma^{2}}{a^{2}}\vee 1\big) \log p$ for some $c' > 1/c$. We conclude the proof by noticing that $s\log\left(\frac{ep}{s}\right)\ge \log p$ for all $1\le s\le p$. 
\end{proof}
Theorem \ref{thm:subgaussian} shows that, with no restriction on the joint behavior of $s$, $a$ and $\sigma$, a sufficient condition for exact recovery in the sub-Gaussian case is the same as in the Gaussian case:
$$ 
n = \Omega \left( s\log{\left(\frac{ep}{s}\right)} \vee \frac{\sigma^{2}\log p}{a^{2}} \right). 
$$
On the other hand, necessary conditions of exact recovery given in \eqref{wain} are valid  for any $X$ with i.i.d. centered entries satisfying $\mathbf{E}(X_{ij}^2)=1$ and for Gaussian noise $\xi$ \cite{wangWain}. It follows 
that, if under the assumptions of Theorem \ref{thm:subgaussian} the noise $\xi$ is Gaussian, our selector achieves {the exact phase transition in the zone $a/\sigma = \mathcal{O}(1/\sqrt{s})$,} while for other values of $s$, $a$ and $\sigma$, it achieves the phase transition up to a logarithmic factor.

\section{ Robustness through MOM thresholding}\label{sec:rob}

In the previous section, we have shown that the suggested selector succeeds for independent sub-Gaussian designs. In practice, the observations we have may be corrupted by some outliers, and the assumption of sub-Gaussian noise is not always relevant. This motivates us to introduce a robust version of this selector.
In this section, we propose a selector that achieves similar properties as described above  under weaker assumptions on the noise and in the presence of outliers.

Suppose that data are partitioned in two disjoint groups $O$ and $I$, where $({\bf x}_{i},Y_{i})_{i \in O}$ are outliers, that is arbitrary vectors with ${\bf x}_i\in \mathbb{R}^p$,  $Y_i\in \mathbb{R}$, and $({\bf x}_{i},Y_{i})_{i\in I}$ are informative observations distributed as described below. Here, $|I|+|O|=n$. 

We assume that the informative observations satisfy
\begin{equation}\label{model2}
    Y_i = {\bf x}_i^{\top}\beta + \xi_i, \quad i\in I, 
\end{equation}
where $\beta\in \mathbb{R}^p$ is an unknown vector of parameters and $\xi_1,\dots,\xi_n$ are zero-mean i.i.d. random  variables such that for some $q,\sigma>0$ we have $\mathbf{E}(|\xi_{i}|^{2+q}) \leq \sigma^{2+q},  i \in I$.  We also assume that, { for $i \in I$}, all components $X_{ij}$ of vectors ${\bf x}_{i}$ are $\sigma_X$-sub-Gaussian i.i.d. random  variables with zero mean and~$\mathbf{E}(X_{ij}^{2}) = 1$. Here, $\sigma_X>0$ is a constant. The conditions on the design can be further weakened but we consider sub-Gaussian designs for the sake of readability and also because such designs are of major interest in the context of compressed sensing. We also assume that $\xi = (\xi_1,\dots,\xi_n)$ is independent of $X=({\bf x}_1^\top,\dots,{\bf x}_n^\top)^\top$. 

In this section, we propose a selector based on median of means (MOM). The idea of MOM goes back to \cite{MOM3},  \cite{MOM2}, \cite{MOM1}. Our selector uses again sample splitting. We first construct a preliminary estimator $\hat{\beta}^*$ based on the subsample $\mathcal{D}_1$ and then we threshold debiased estimators of the components of $\beta$. These debiased estimators are constructed using both $\hat{\beta}^*$ and the second subsample $\mathcal{D}_2$. {
In the same spirit as in Proposition \ref{prop:1}, we require $\hat{\beta}^{*}$ to satisfy the following assumption. 
{
\begin{assumption}\label{MOM-SLOPE}
Let $X$ and $\xi$ satisfy the conditions stated above in this section. 
There exist  constants $c_0, c_{1},c_{2}>0$ depending only on $q$ and the sub-Gaussian constant $\sigma_X$ such that the following holds. If $|O| \leq c_{0}s\log(ep/s) \leq n_1/2$, then the estimator $\hat{\beta}^*$ satisfies
$$  \underset{|\beta|_0 \le s}{\sup}\mathbf{P}_{\beta}\left( \| \hat{\beta}^* - \beta \| \geq  \sigma\right) \leq c_1 \Big(\frac{s}{p}\Big)^{c_{2}s} .$$
\end{assumption} 
}
As a preliminary estimator, we may take the MOM-SLOPE estimator of \cite{lec17}, for 
which Assumption~\ref{MOM-SLOPE} is satisfied, cf. Lemma~\ref{lem:MOM_SLOPE}.
}

Note that the bound of Assumption \ref{MOM-SLOPE} holds uniformly over all outlier sets $|O|$ such that $|O| \leq c_{1}s\log(ep/s)$, and uniformly over all distributions of $\xi_i$ satisfying the assumptions of this section.  Based on the fact that the MOM-SLOPE estimator satisfies Assumption \ref{MOM-SLOPE}, we will now present a robust version of our selector. We split our sample in two subsamples of size $n/2$ each. The first subsample is used to construct a pilot estimator, which is the MOM-SLOPE estimator or any other estimator $ \hat{\beta}^*$ satisfying Assumption \ref{MOM-SLOPE}. Then, the selector is constructed based on this estimator $\hat{\beta}^*$ and on the second subsample.  To simplify the notation, for the rest of this section we will consider that the size of the second subsample is $n$ rather than $n/2$ and we have an estimator $\hat{\beta}^*$ satisfying Assumption \ref{MOM-SLOPE} and independent from the second subsample. 

Let $K =\lfloor c_3\log(p) \rfloor$ be the number of blocks, {with  $c_3\geq 500$}. Assume that $1< K<n$. By extracting $K$ disjoint blocks from the observation $Y$ corresponding to the second subsample, we get $K$ independent observations $({\bf Y}^{(i)})_{1\leq i \leq K}$, where ${\bf Y}^{(i)} \in \mathbb{R}^{q}$ and $q = \lfloor \frac{n}{K} \rfloor $. Each observation  ${\bf Y}^{(i)}$ satisfies
$$ {\bf Y}^{(i)} = {\bf X}^{(i)}\beta + \xi^{(i)},$$
where $X^{(i)}$ is a submatrix of $X$ with rows indexed by the $i$th block.
For $i=1,\dots,K$, consider the new observations
\begin{align*}
     {\bf Z}^{(i)} &= \frac{1}{q} {\bf X}^{(i)\top}{\bf Y}^{(i)} - \left( \frac{1}{q}{\bf X}^{(i)\top}{\bf X}^{(i)} - I_{p} \right)\hat{\beta}^*.
\end{align*}
We denote by $Z^{(i)} _1,\dots,Z^{(i)} _p$ the components of ${\bf Z}^{(i)}$.
Consider the selector defined as a vector
\begin{equation}\label{eq:selec4}
    \hat{\eta}(X,Y) = \left( \hat{\eta}_{1}(X,Y),\dots,\hat{\eta}_{p}(X,Y) \right) 
\end{equation}
with components 
\begin{equation}\label{eq:selec5}
    \hat{\eta}_{j}(X,Y) = \mathbf{1}\left\{  |Med(Z_{j})| > t \right\},\quad j=1,\dots,p,
\end{equation}
where $Med(Z_{j})$ is the median of $Z^{(1)} _j,\dots,Z^{(K)} _j$, and $t=c_4\sigma \sqrt{\frac{\log{p}}{n}}$ with {a positive constant $c_4>0$  depending only on the sub-Gaussian constant $\sigma_X$}. The next theorem shows that, when the noise has polynomial tails and contains a portion of outliers, the robust selector \eqref{eq:selec4} - \eqref{eq:selec5} achieves exact recovery under the same condition on the sample size as when the noise is Gaussian. 
\begin{theorem}\label{thm:robust}
Let $X$ and $\xi$ satisfy the conditions stated at the beginning of this section. Then, there exist absolute constants $c', c_{3},c_{4}>0$ and a constant $C'>0$ depending only on $q$ and on the sub-Gaussian constant $\sigma_X$ such that the following holds.
 Let $c'< s<p$. Then, the selector given in \eqref{eq:selec4} - \eqref{eq:selec5} achieves exact recovery with respect to both risks \eqref{eq:exact:supp} and \eqref{eq:exact:hamm}
if $n\geq C' \left( s\log(p/s) \vee \sigma^{2}\frac{\log(p)}{a^{2}} \right)$ and $|O| < K/4$. 
\end{theorem}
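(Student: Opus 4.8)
\emph{Proof plan.} The plan is to reduce, conditionally on the pilot estimator $\hat\beta^*$, the block statistics $\mathbf{Z}^{(i)}$ to a robust mean model and then invoke the median-of-means concentration. First I would substitute $\mathbf{Y}^{(i)}=\mathbf{X}^{(i)}\beta+\xi^{(i)}$ into the definition of $\mathbf{Z}^{(i)}$ for each \emph{clean} block $i$ (one containing no outlier) to get the de-biased identity
$$
\mathbf{Z}^{(i)} = \beta + \Big(\tfrac1q\mathbf{X}^{(i)\top}\mathbf{X}^{(i)} - I_p\Big)\Delta + \tfrac1q\mathbf{X}^{(i)\top}\xi^{(i)},\qquad \Delta := \beta-\hat\beta^*,
$$
so that the $j$th coordinate $Z^{(i)}_j-\beta_j$ splits into a bias term $[(\tfrac1q\mathbf{X}^{(i)\top}\mathbf{X}^{(i)}-I_p)\Delta]_j$ and a noise term $(\tfrac1q\mathbf{X}^{(i)\top}\xi^{(i)})_j$. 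I would work throughout on the event $\mathbb{A}=\{\|\Delta\|\le\sigma\}$, whose complement has probability at most $c_1(s/p)^{c_2 s}$ by Assumption \ref{MOM-SLOPE} (whose outlier tolerance $|O|\le c_0 s\log(ep/s)$ is met because $|O|<K/4$ is of order $\log p$ and $s\ge c'$). Since $q\asymp n/\log p$, the entries of $X$ are $\sigma_X$-sub-Gaussian with $\mathbf{E}(X_{ij}^2)=1$, and $\xi$ is independent of $X$, a direct variance computation gives $\mathbf{E}[(Z^{(i)}_j-\beta_j)^2\mid\hat\beta^*]\le C\sigma^2/q$ on $\mathbb{A}$; crucially this uses only $\mathbf{E}(\xi_i^2)\le\sigma^2$, which follows from the $(2+q)$-th moment assumption by Jensen, so the heavy tails are harmless at the block level.

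Because the threshold satisfies $t=c_4\sigma\sqrt{\log p/n}\asymp\sigma/\sqrt q$, Chebyshev's inequality yields a \emph{constant} per-block failure probability: for a clean block, $\mathbf{P}(|Z^{(i)}_j-\beta_j|>t\mid\hat\beta^*)\le p_0$ with $p_0\le C'' c_3/c_4^2$, which can be made smaller than $1/8$ by choosing $c_4$ large (depending on $c_3$ and $\sigma_X$). This constant bound is all the median-of-means step requires. Next I would run the median concentration: a single outlier spoils at most one block, so $|O|<K/4$ leaves at least $3K/4$ clean blocks; for $\mathrm{Med}(Z_j)>t$ (a false positive when $\beta_j=0$), at least $K/2$ of the $Z^{(i)}_j$ must exceed $t$, hence at least $K/4$ \emph{clean} blocks must fail, each independently (the blocks use disjoint rows) with probability at most $p_0<1/8$. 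A Chernoff bound for this binomial event gives $\mathbf{P}(\text{error at }j\mid\hat\beta^*)\le 2e^{-cK}$ for an absolute $c>0$. The false-negative case $\beta_j\ne 0$ is symmetric once the separation $a-t\ge t$ is checked: the hypothesis $n\ge C'\sigma^2\log p/a^2$ gives $a\ge 2t$ for $C'\ge 4c_4^2$, so a clean block can have $Z^{(i)}_j\le t$ only if $|Z^{(i)}_j-\beta_j|\ge a-t\ge t$, again of probability $\le p_0$.

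Finally I would assemble the pieces by a union bound over the $p$ coordinates: on $\mathbb{A}$ this gives both $\mathbf{P}(\hat\eta\ne\eta_\beta\mid\hat\beta^*)\le 2p\,e^{-cK}$ and $\mathbf{E}[|\hat\eta-\eta_\beta|\mid\hat\beta^*]\le 2p\,e^{-cK}$, and with $K=\lfloor c_3\log p\rfloor$, $c_3\ge 500$ this is $p^{1-cc_3}\to 0$. Adding the contribution of $\mathbb{A}^c$, namely $c_1(s/p)^{c_2 s}$ for the probability risk and $p\,c_1(s/p)^{c_2 s}\to 0$ for the Hamming risk (handled as in the proof of Proposition \ref{prop:ad}, using $s\ge c'$), yields exact recovery for both criteria. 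The main obstacle, and the place where the constants $c_3,c_4,C'$ must be balanced, is the block-level control of the bias term $[(\tfrac1q\mathbf{X}^{(i)\top}\mathbf{X}^{(i)}-I_p)\Delta]_j$: one must verify, using only $\|\Delta\|\le\sigma$ and the sub-Gaussianity of $X$, that its standard deviation is of the same order $\sigma/\sqrt q$ as the threshold and that the resulting $p_0$ stays below $1/8$. Everything else — the de-biasing algebra, the heavy-tailed noise absorbed by a second moment, and the Chernoff step — is routine once this scale matching is secured.
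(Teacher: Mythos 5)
Your proposal is correct and follows essentially the same route as the paper: the same de-biasing identity ${\bf Z}^{(i)}=\beta+\varepsilon^{(i)}$, the same conditional second-moment bound $\mathbf{E}[(\varepsilon^{(i)}_j)^2\,|\,\hat\beta^*]\le C\sigma^2/q$ on the event $\{\|\hat\beta^*-\beta\|\le\sigma\}$ (the paper's Lemma A.5), Chebyshev to get a constant per-clean-block failure probability, the outlier accounting via $|O|<K/4$, a Hoeffding/Chernoff bound on the median, and a final union bound over coordinates combined with Assumption 6.1 for the pilot estimator. The only differences are immaterial constants ($1/8$ versus the paper's $1/5$, $2e^{-cK}$ versus $e^{-K/400}$) and that you condition on $\hat\beta^*$ rather than intersecting events with $\mathbb{A}_*$, which is if anything slightly cleaner.
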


\begin{proof}

For all $i=1,\dots,K$, we have $$  {\bf Z}^{(i)} = \beta + \varepsilon^{(i)}, $$
where 
$$ \varepsilon^{(i)} = \left( \frac{1}{q}{\bf X}^{(i)\top}{\bf X}^{(i)} - I_{p}\right)\left( \beta - \hat{\beta}^*\right) + \frac{1}{q} {\bf X}^{(i)\top}\xi^{(i)}. $$
The random vectors $\varepsilon^{(1)}, \dots, \varepsilon^{(K)}$ are independent conditionally on $\hat{\beta}^*$. Let $\varepsilon^{(i)}_j$ denote the $j$th component of $\varepsilon^{(i)}$. Note that $Med({Z}_{j}) = \beta_{j} +  Med(\varepsilon_{j}) $, where $Med(\varepsilon_{j})$ denotes the median of $\varepsilon^{(1)}_{j},\dots,\varepsilon^{(K)}_{j}$. 
Choose $C'>0$ large enough to guarantee that $a > 2t$. Then,
\begin{align*}
  \mathbf{E}_{\beta}\left| \hat{\eta} - \eta_{\beta} \right| &= \sum_{j:\beta_{j} \neq 0} \mathbf{P}_{\beta}\left( |Med(Z_{j})| \leq t \right) + \sum_{j:\beta_{j} = 0} \mathbf{P}_{\beta}\left( |Med(Z_{j})| > t \right) \\
   &\leq \sum_{j:\beta_{j} \neq 0} \mathbf{P}_{\beta}\left( |Med(\varepsilon_{j})| \geq a-t\right) + \sum_{j:\beta_{j} = 0}\mathbf{P}_{\beta}\left( |Med(\varepsilon_{j})| > t \right)\\
   &\leq p\, \underset{j=1,\dots,p}{\sup}\,\mathbf{P}_{\beta}\left( |Med(\varepsilon_{j})| \geq t \right) .
\end{align*}
Consider the event
  $ \mathbb{A}_* = \{ \|\hat{\beta}^*-\beta \|^{2} \leq \sigma^{2}  \}. $ The following lemma is proved in the Appendix.
\begin{lem}\label{lem:7}
    Under the conditions of Theorem \ref{thm:robust} we have
    $$\underset{j=1,\dots,p}{\sup}\mathbf{P}_{\beta}\left( |Med(\varepsilon_{j})| \geq t \right)  \leq e^{- {K/400}} + \mathbf{P}_{\beta}(\mathbb{A}_*^{c})$$
    for some $c_5>0$.
\end{lem}
From Lemma \ref{lem:7} and Assumption \ref{MOM-SLOPE} we get
$$ \underset{\beta \in \Omega^p_{s,a}}{\sup}\mathbf{E}_{\beta}\left|\hat{\eta} - \eta_{\beta}\right| \leq p e^{- {K/400}} +  pe^{-c_{2}s\log(ep/s)}. $$
Since $K = \lfloor c_3\log(p)\rfloor$, and $s\log(ep/s)\ge c'\log(ep/c')$ the result follows for {$c_3\geq 500$,} and $c'>0$ chosen large enough.
\end{proof}
We see that sufficient conditions of exact recovery for the robust selector are of the same order as in the Gaussian case. If the risk is considered uniformly over all noise distributions under the conditions of this section, clearly the Gaussian noise is in this class. Hence, necessary conditions in the Gaussian case are also necessary for such a uniform risk over noise distributions. We have proved previously that, sufficient conditions for the selector \eqref{eq:selec1} - \eqref{eq:selec2} to achieve exact recovery are almost optimal in the Gaussian case. As a consequence, the selector \eqref{eq:selec4} - \eqref{eq:selec5} is almost optimal in this more general setting. 

\section{Conclusion}
In this paper, we proposed computationally tractable algorithms of variable selection that can achieve exact recovery under milder conditions than the ones known so far. Throughout different sections, we have investigated, respectively, the setting with Gaussian observations, sub-Gaussian observations, and heavy-tailed observations corrupted by arbitrary outliers. We have shown that the suggested selectors nearly achieve necessary conditions of exact recovery.  For the Gaussian case, we obtained not only the conditions of exact recovery but also accurate upper and lower bounds on the minimax Hamming risk.  Furthermore, we constructed a selector, which is fully adaptive to all parameters of the problem and achieves exact recovery under almost the same sufficient conditions as in the case where sparsity $s$ and the signal strength $a$ and the noise level $\sigma$ are known. Finally, we proposed a robust variant of our method that achieves exact recovery when the observations have outliers or are heavy-tailed  under sufficient conditions similar to those for the Gaussian case.

\medskip

{\bf Acknowledgement.} This work was supported by GENES and by the French National Research Agency (ANR) under the grants IPANEMA (ANR-13-BSH1-0004-02) and Labex Ecodec (ANR-11-LABEX-0047).

%

\appendix
\section{Appendix}
In order to prove  Theorem \ref{th:inf}, we use the following result from \cite{butucea}.  Consider the set of binary vectors
$$ A = \left\{  \eta \in \{0,1\}^{p} : \  |\eta|_{0} \leq s \right\}$$
and assume that we are given a family $\{\mathbf{P}_{\eta}, \eta \in A\}$ where each $\mathbf{P}_{\eta}$ is a probability distribution on a measurable space $(\mathcal{X}, \mathcal{U})$.
We observe $X$  drawn from $\mathbf{P}_{\eta}$ with some unknown $\eta=(\eta_1,\dots,\eta_p) \in A$ and we consider the Hamming risk of a selector $\hat{\eta}=\hat{\eta}(X)$:
$$
\sup_{ \eta \in A}\mathbf{E}_{\eta}|\hat{\eta} - \eta|
$$
where $\mathbf{E}_{\eta}$ is the expectation w.r.t. $\mathbf{P}_{\eta}$. We call the selector any estimator with values in $\{0,1\}^{p}$. Let $\pi$ be a probability measure on $\{0,1\}^{p}$ (a prior on $\eta$). We denote by $\mathbb{E}_{\pi}$ the expectation with respect to~$\pi$. Then the following result is proved in \cite{butucea}
\begin{theorem}{\cite{butucea}}\label{thm:indep_prior} 
Let $\pi$ be a product on $p$ Bernoulli measures with parameter $s'/p$ where $s'\in (0, s]$. Then,
\begin{equation}\label{eeq2:th}
\inf_{\hat{\eta}} \sup_{ \eta \in A} \mathbf{E}_\eta |\hat{\eta} - \eta|
\geq  \inf_{\hat T\in [0,1]^p}\mathbb{E}_{\pi}\mathbf{E}_\eta \sum_{i=1}^p | {\hat T}_i - \eta_i | - 4 s' \exp\Big(-\frac{(s-s')^2}{2s}\Big),
\end{equation}
where $\inf_{\hat{\eta}}$ is the infimum over all selectors and $\inf_{\hat T\in [0,1]^p}$ is the infimum over all  estimators $\hat T=({\hat T}_1,\dots,{\hat T}_p)$ with values in $[0,1]^p$. 
\end{theorem}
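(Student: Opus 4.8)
The plan is to bound the minimax risk from below by a Bayes risk over the constrained set $A$, and then to control the price of replacing the product prior $\pi$ by its restriction to $A$. First I would fix an arbitrary selector $\hat\eta\in\{0,1\}^p$ and use $\pi(A)=\sum_{\eta\in A}\pi(\eta)\le 1$ to write
$$ \sup_{\eta\in A}\mathbf{E}_\eta|\hat\eta-\eta|\ \ge\ \frac{1}{\pi(A)}\sum_{\eta\in A}\pi(\eta)\,\mathbf{E}_\eta|\hat\eta-\eta|\ \ge\ \sum_{\eta\in A}\pi(\eta)\,\mathbf{E}_\eta|\hat\eta-\eta|. $$
Relaxing the range of the estimator from $\{0,1\}^p$ to $[0,1]^p$ only decreases the infimum, so with $G(\hat T):=\sum_{\eta\in A}\pi(\eta)\mathbf{E}_\eta\sum_{i=1}^p|\hat T_i-\eta_i|$ and $F(\hat T):=\mathbb{E}_\pi\mathbf{E}_\eta\sum_{i=1}^p|\hat T_i-\eta_i|$ we obtain $\inf_{\hat\eta}\sup_{\eta\in A}\mathbf{E}_\eta|\hat\eta-\eta|\ge \inf_{\hat T\in[0,1]^p}G(\hat T)$. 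Since $F=G+H$ with $H(\hat T):=\sum_{\eta\notin A}\pi(\eta)\mathbf{E}_\eta\sum_i|\hat T_i-\eta_i|\ge 0$, taking a minimizer $\hat T^\star$ of $G$ gives $\inf_{\hat T}G(\hat T)=F(\hat T^\star)-H(\hat T^\star)\ge \inf_{\hat T}F(\hat T)-H(\hat T^\star)$. As $\inf_{\hat T}F$ is exactly the Bayes term on the right-hand side of the claim, it remains to prove $H(\hat T^\star)\le 4s'\exp(-(s-s')^2/(2s))$.

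The crux is to identify $\hat T^\star$ and exploit its sparsity. Minimizing $G$ is the same (up to the positive factor $\pi(A)$) as minimizing the Bayes risk under the conditioned prior $\bar\pi:=\pi(\cdot\mid A)$, and because the Hamming loss is additive this decouples coordinatewise, yielding $\hat T^\star_i=\mathbf{1}\{\bar q_i>1/2\}$ with $\bar q_i=\mathbf{P}_{\bar\pi}(\eta_i=1\mid X)$. Thus $\hat T^\star$ is $\{0,1\}$-valued; and since $\bar\pi$ is supported on $A=\{|\eta|_0\le s\}$, we have $\mathbb{E}_{\bar\pi}[|\eta|_0\mid X]\le s$, so that Markov's inequality gives the \emph{deterministic} bound
$$ |\hat T^\star|_0=\sum_{i=1}^p\mathbf{1}\{\bar q_i>1/2\}\ \le\ 2\sum_{i=1}^p \bar q_i\ =\ 2\,\mathbb{E}_{\bar\pi}[|\eta|_0\mid X]\ \le\ 2s. $$

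With this in hand the correction collapses. For $\eta\notin A$ we have $|\eta|_0>s$, hence $\sum_i|\hat T^\star_i-\eta_i|\le |\hat T^\star|_0+|\eta|_0\le 2s+|\eta|_0\le 3|\eta|_0$, so $H(\hat T^\star)\le 3\,\mathbb{E}_\pi[|\eta|_0\mathbf{1}\{|\eta|_0>s\}]$, a quantity free of the data. Writing $|\eta|_0=\sum_j\eta_j$ with the $\eta_j$ i.i.d.\ Bernoulli$(s'/p)$ under $\pi$ and using the identity $\mathbb{E}_\pi[\eta_j\mathbf{1}\{|\eta|_0>s\}]=(s'/p)\,\mathbf{P}(\mathrm{Bin}(p-1,s'/p)\ge s)$, this sums to $H(\hat T^\star)\le 3s'\,\mathbf{P}(\mathrm{Bin}(p-1,s'/p)\ge s)$. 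A Bernstein/Chernoff tail bound for the binomial (mean at most $s'$, so the deviation is at least $s-s'$ and the variance proxy plus a third of the deviation is at most $s$) gives $\mathbf{P}(\mathrm{Bin}(p-1,s'/p)\ge s)\le\exp(-(s-s')^2/(2s))$, and the factor $3\le 4$ closes the estimate.

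The step I expect to be the main obstacle is precisely the false-positive contribution to $H$: the naive bound $|\hat T_i-\eta_i|\le 1$ would only give $H(\hat T^\star)\le p\,\pi(A^c)$, and the factor $p$ is fatal (one checks that for small $s$ and large $p$ this exceeds $4s'\exp(-(s-s')^2/(2s))$). Everything therefore hinges on the observation that the \emph{constrained} Bayes rule $\hat T^\star$ activates at most $2s$ coordinates; this replaces $p$ by $2s$ and reduces the correction to the expected excess sparsity $\mathbb{E}_\pi[|\eta|_0\mathbf{1}\{|\eta|_0>s\}]$, for which the binomial tail bound delivers the stated form.
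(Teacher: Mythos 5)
Your proof is correct. Note that the paper itself contains no proof of this statement: Theorem \ref{thm:indep_prior} is imported verbatim from \cite{butucea}, so there is no internal argument to compare against, and your write-up amounts to a self-contained derivation of the quoted result. The skeleton (sup over $A$ bounded below by the $\pi$-average restricted to $A$, relaxation from selectors to $[0,1]^p$-valued estimators, and the decomposition $F=G+H$ evaluated at a minimizer $\hat T^\star$ of $G$) is sound, and the crux of your argument is exactly the right one: the Bayes rule for the conditioned prior $\bar\pi=\pi(\cdot\mid A)$ decouples coordinatewise into thresholding of posterior probabilities $\bar q_i$, so it is $\{0,1\}$-valued with
$|\hat T^\star|_0\le 2\sum_i \bar q_i = 2\,\mathbb{E}_{\bar\pi}\bigl[|\eta|_0\mid X\bigr]\le 2s$
deterministically; this is what replaces the fatal factor $p$ in the naive bound $H(\hat T^\star)\le p\,\pi(A^c)$ by a factor of order $s$. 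The bookkeeping is also right: $\sum_i|\hat T^\star_i-\eta_i|\le |\hat T^\star|_0+|\eta|_0\le 3|\eta|_0$ for $\eta\notin A$; the identity $\mathbb{E}_\pi\bigl[|\eta|_0\mathbf{1}\{|\eta|_0>s\}\bigr]=s'\,\mathbf{P}\bigl(\mathrm{Bin}(p-1,s'/p)\ge s\bigr)$ (using that $s$ is an integer); and the Bernstein step, where with mean $\mu\le s'$, variance proxy at most $\mu$, and deviation $t=s-\mu\ge s-s'$ one has $\mu+t/3=(2\mu+s)/3\le s$, giving an exponent at least $(s-s')^2/(2s)$. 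Your argument uses nothing about the family $\{\mathbf{P}_\eta\}$ beyond measurability, matching the generality of the statement, and in fact yields the constant $3$ in place of $4$, i.e.\ a marginally stronger bound than the one quoted.
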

\begin{proof}[\textbf{Proof of Theorem \ref{th:inf}}]
Let $\Theta(p,s,a)$ a subset of $\Omega_{s,a}^{p}$ defined as
$$ \Theta(p,s,a) = \{ \beta \in \Omega_{s,a}^{p}: \ \beta_{i} = a , \ \forall i \in S_{\beta} \}.$$
Since any $\beta \in \Theta(p,s,a)$ can be written as $\beta = a \eta_{\beta}$,  there is a one-to-one correspondence between $A$ and $\Theta(p,s,a)$. Hence,
$$  \inf_{\hat{\eta}} \sup_{ \eta \in A} \mathbf{E}_\eta |\hat{\eta} - \eta| = \inf_{\hat{\eta}} \sup_{ \beta \in \Theta(p,s,a)} \mathbf{E}_\beta |\hat{\eta} - \eta_{\beta}| .$$
Using this remark and Theorem \ref{thm:indep_prior} we obtain that, for all $s'\in (0, s]$,
$$ \underset{\hat{\eta}}{\inf}\underset{\beta \in \Omega^{p}_{s,a}}{\sup}\mathbf{E}_{\beta}|\hat{\eta} - \eta_{\beta}| \geq  \inf_{\hat T\in [0,1]^p}\mathbb{E}_{\pi}\mathbf{E}_\eta \sum_{i=1}^p | {\hat T}_i(X,Y) - \eta_i | - 4 s' \exp\Big(-\frac{(s-s')^2}{2s}\Big),$$
where $\pi$ a product on $p$ Bernoulli measures with parameter $s'/p$.
Thus, to finish the proof it remains to show that
$$ \inf_{\hat T\in [0,1]^p}\mathbb{E}_{\pi}\mathbf{E}_\eta \sum_{i=1}^p | {\hat T}_i(X,Y) - \eta_i | \geq \frac{s'}{s}\psi_{+}(n,p,s,a,\sigma). $$
We  first  notice that
\begin{eqnarray*} \inf_{\hat T\in [0,1]^d}
\mathbb{E}_{\pi}\mathbf{E}_\eta \sum_{i=1}^p | {\hat T}_i(X,Y) - \eta_i | &\geq& \sum_{i=1}^{p}\mathbb{E}_{\pi}\mathbf{E}_\eta\Big[\inf_{\hat T_{i}\in [0,1]}\mathbb{E}_{\pi}\mathbf{E}_\eta \Big( | {\hat T}_i(X,Y) - \eta_i | \big| \eta_{(-i)}, X\Big)\Big]\\
&\ge &\sum_{i=1}^{p}\mathbb{E}_{\pi}\mathbf{E}_\eta\Big[\inf_{\tilde T_{i}\in [0,1]}\mathbb{E}_{\pi}\mathbf{E}_\eta \Big( | {\tilde T}_i(X,Y,\eta_{(-i)}) - \eta_i | \big| \eta_{(-i)}, X\Big)\Big]\\
 &=&\sum_{i=1}^{p}\mathbb{E}_{\pi}\mathbf{E}_\eta [ L_i^*]
\end{eqnarray*}
where $\eta_{(-i)}$ denotes $(\eta_{j})_{j \neq i}$ and $L_i^*=L_i^*(\eta_{(-i)}, X)$ has the form
\begin{eqnarray}\nonumber
L_i^*&=& \inf_{\tilde T_{i}\in [0,1]}\Big( \frac{s'}{p}\int ( 1 - {\tilde T}_i(X,y,\eta_{(-i)}) )\varphi_\sigma(y-aX_i-\sum_{j \neq i} a\eta_{j}X_{j}){\rm d}y \\
&& \quad + \Big(1 - \frac{s'}{p} \Big) \int {\tilde T}_i(X,y,\eta_{(-i)}) \varphi_\sigma(y-\sum_{j \neq i} a\eta_{j}X_{j}){\rm d}y \Big). \label{li}
\end{eqnarray}
Here, $\varphi_{\sigma}$ is the density of Gaussian distribution in ${\mathbb R}^n$ with i.i.d. zero-mean and variance $\sigma^{2}$ components.
By the Bayesian version of the Neyman-Pearson lemma, the infimum in \eqref{li} is attained for $\tilde{T}_{i} = T_{i}^{*}$ given by the formula
$$ T_{i}^{*}\left(X, Y, \eta_{(-i)}\right) = \mathbf{1}\left\{ \frac{(s'/p)\phi_{\sigma}(Y-aX_i-\sum_{j \neq i} a\eta_{j}X_{j})}{(1-s'/p)\phi_{\sigma}(Y-\sum_{j \neq i} a\eta_{j}X_{j})}>1\right\}. $$
Equivalently,
$$  T_{i}^{*} = \mathbf{1}\left\{ \frac{X_{i}^{\top}(Y-\sum_{j \neq i} a\eta_{j}X_{j})}{\| X_{i}\|}>t(s',X_{i})\right\}, $$
where 
$$ t(s',X_{i})= \frac{a\|X_{i}\|}{2} + \frac{\sigma^{2}\log(\frac{p}{s'}-1)}{a\|X_{i}\|}.$$
Hence,
$$ L_{i}^{*} = \left( 1 - \frac{s'}{p}\right)\mathbf{P}\left( \frac{X_{i}^{T}\sigma\xi}{\|X_{i} \|} > t(s',X_{i})  \right) + \frac{s'}{p}\mathbf{P}\left( - \frac{X_{i}^{T}\sigma\xi}{\| X_{i}\|}>a\| X_{i}\| - t(s',X_{i}) \right).  $$
where $\xi$ is a standard Gaussian random vector in ${\mathbb R}^n$ independent of $X_i$. 
Notice now that $\varepsilon: = \frac{X_{i}^{T}\xi}{\|X_{i} \|} $ is a standard Gaussian random variable and it is independent  of $\|X_{i} \|$ since $X_{i} \sim {\mathcal N}(0, {\mathbb I}_n)$. Combining the above arguments we find that
$$
\inf_{\hat T\in [0,1]^p}\mathbb{E}_{\pi}\mathbf{E}_\eta \sum_{i=1}^p | {\hat T}_i(X,Y) - \eta_i |  \geq \psi_{+}(n,p,s',a,\sigma).$$
We conclude the proof by using the fact that the function $u\to \frac{\psi_{+}(n,p,u,a,\sigma)}{u}$ is decreasing for $u>0$ (cf. \cite{butucea}), so that 
$\psi_{+}(n,p,s',a,\sigma) \geq \frac{s'}{s}\psi_{+}(n,p,s,a,\sigma). $
\end{proof}

\begin{proof}[\textbf{Proof of Theorem \ref{thm:nec1}}]

In view of Theorem \ref{th:inf} with  $s'=s/2$, it is sufficient to bound $\psi_{+}=\psi_{+}(n,p,s,a,\sigma)$ from below. We have
$$  \psi_{+} \geq \left(p-s\right) \mathbf{P}\left( \sigma \varepsilon \geq t\left( \zeta \right) \right).  $$
We will use the following bound for the tails of standard Gaussian distribution:  For some $c'>0$,
 $$  \forall y \geq 2/3, \quad  \quad  \mathbf{P}\left( \varepsilon \geq y\right) \geq c' \frac{\exp(-{y^{2}}/{2})}{y}. $$
We also recall that the density $f_{n}$ of a chi-squared distribution with $n$ degrees of freedom has the form
\begin{equation}\label{20}
  f_{n}(t) = b_{n}t^{\frac{n}{2}-1}e^{-\frac{t}{2}}, \quad  t>0,
  \end{equation}
and $\underset{n \to \infty}{\lim} \frac{b_{n+1}}{b_{n}}\sqrt{n+1}=1,  $ so that for some $c''>0$ we have
$$ \forall n\geq 1,\qquad b_{n+1} \geq c'' \frac{b_{n}}{\sqrt{n+1}}. $$
Combining the above remarks we get 
\begin{eqnarray}
\psi_{+}&\geq & (p-s)\int_{2/3}^{\infty} \mathbf{P}\left(  \varepsilon \geq \left( \frac{\sqrt{u}a}{2\sigma} + \frac{\sigma \log\left( \frac{p}{s}-1\right)}{a\sqrt{u}} \right) \right) f_{n}(u)\mathrm{d}u\\
\nonumber
&\geq & c'\left( p-s \right) \int_{2/3}^{\infty}\frac{ \exp\Big(-\frac{1}{2}\Big( \frac{\sqrt{u}a}{2\sigma} + \frac{\sigma \log\left( \frac{p}{s}-1\right)}{a\sqrt{u}} \Big)^{2}\Big)}{\frac{\sqrt{u}a}{2\sigma} + \frac{\sigma \log\left( \frac{p}{s}-1\right)}{a\sqrt{u}}} f_{n}(u)\mathrm{d}u\\
\nonumber
&\geq & \frac{b_{n-1}c}{\sqrt{n}}\sqrt{s\left(p-s\right)} \int_{2/3}^{\infty}\frac{ \exp\Big(- \frac{u}{2}\left( 1+\frac{a^{2}}{4\sigma^{2}}\right) - \frac{\sigma^{2} \log\left( \frac{p}{s}-1\right)^{2}}{2a^{2}u} \Big)}{\frac{\sqrt{u}a}{2\sigma} + \frac{\sigma \log\left( \frac{p}{s}-1\right)}{a\sqrt{u}}} u^{\frac{n}{2}-1}\mathrm{d}u,
 \end{eqnarray}
 where $c=c' c''$. Set
 $$
 B= \int_{1}^{\infty}\frac{ \exp\Big(- \frac{v}{2} - \frac{\sigma^{2} \log\left( \frac{p}{s}-1\right)^{2}\left( 1+\frac{a^{2}}{4\sigma^{2}}\right)}{2a^{2}v} \Big)}{ 1 + \frac{\sigma^{2} \log\left( \frac{p}{s}-1\right)\left(1+\frac{a^{2}}{4\sigma^{2}}\right)}{a^{2}v}} v^{\frac{n-1}{2}-1}\mathrm{d}v.
 $$
 Using the change of variable $ v = u\left( 1+ \frac{a^{2}}{4\sigma^{2}}\right) $ and the assumptions of  the theorem we get
 \begin{eqnarray*}
\psi_{+} &\geq & \frac{c b_{n-1}}{\sqrt{n}}\sqrt{s\left(p-s\right)} \, e^{-\frac{n}{2}\log\left(1+\frac{a^{2}}{4\sigma^{2}}\right)} 
\int_{\frac23\left(1+\frac{a^{2}}{4\sigma^{2}}\right)}^{\infty}
\frac{ \exp\Big(-\ \frac{v}{2} - \frac{\sigma^{2} \log\left( \frac{p}{s}-1\right)^{2}\left( 1+\frac{a^{2}}{4\sigma^{2}}\right)}{2a^{2}v} \Big)}{\frac{\sqrt{v}a}{2\sigma\sqrt{1+\frac{a^{2}}{4\sigma^{2}}}} 
+ \frac{\sigma \log\left( \frac{p}{s}-1\right)\sqrt{1+\frac{a^{2}}{4\sigma^{2}}}}{a\sqrt{v}}} v^{\frac{n}{2}-1}\mathrm{d}v  \nonumber\\
& \geq   & c b_{n-1} B \sqrt{\frac{4\sigma^{2}\left( 1+\frac{a^{2}}{4\sigma^{2}}\right)}{na^{2}}}\sqrt{s\left(p-s\right)} e^{-\frac{n}{2}\log\left(1+\frac{a^{2}}{4\sigma^{2}}\right)}\\
\nonumber
& \geq   & c b_{n-1}B\sqrt{\frac{s\left(p-s\right)}{n\log\left( 1+\frac{a^{2}}{4\sigma^{2}}\right)}} e^{-\frac{n}{2}\log\left(1+\frac{a^{2}}{4\sigma^{2}}\right)} ,
 \end{eqnarray*}
where the second inequality uses the condition $a\leq \sqrt{2}\sigma$ to guarantee that
 $ \frac{2}{3}\left(1+\frac{a^{2}}{4\sigma^{2}}\right) \leq 1, $
 while the last inequality uses the fact that
 $(1+x) \log (1+x) \geq x,\ \forall x \geq 0. $
 To finish the proof, we need to bound $ b_{n-1}B$ from below.  
 We have
 \begin{eqnarray*}
B
&\geq & \int_{n}^{\infty}\frac{ \exp\Big(- \frac{v}{2} - \frac{\sigma^{2} \log\left( \frac{p}{s}-1\right)^{2}\left( 1+\frac{a^{2}}{4\sigma^{2}}\right)}{2a^{2}v} \Big)}{ 1 + \frac{\sigma^{2} \log\left( \frac{p}{s}-1\right)\left(1+\frac{a^{2}}{4\sigma^{2}}\right)}{a^{2}v}} v^{\frac{n-1}{2}-1}\mathrm{d}v \\
\nonumber 
& \geq & \frac{ \exp\Big( - \frac{\sigma^{2} \log\left( \frac{p}{s}-1\right)^{2}\left( 1+\frac{a^{2}}{4\sigma^{2}}\right)}{2a^{2}n} \Big)}{ 1 + \frac{\sigma^{2} \log\left( \frac{p}{s}-1\right)\left(1+\frac{a^{2}}{4\sigma^{2}}\right)}{a^{2}n}}\frac{\int_{n}^{\infty}f_{n-1}(u)\mathrm{d}u}{b_{n-1}}.
 \end{eqnarray*}
 The last inequality is due to the fact that the function $x \to \frac{e^{-\frac{c}{x}}}{1+\frac{1}{x}}$ is increasing for $x>0$, for any fixed $c>0$.  Since $n>\frac{2\sigma^{2}\log\left( \frac{p}{s}-1\right)}{a^{2}}$ and $a^{2}<2\sigma^{2}$, we deduce from the last display that
 $$  b_{n-1}B  \geq 
  \frac{4}{7} \exp\Big(-\frac{3}{8}\log{\left( \frac{p}{s}-1\right)}\Big) \int_{n}^{\infty}f_{n-1}(u)\mathrm{d}u.  $$
 {Proposition 3.1} from \cite{ing} implies that, for some absolute constant $c>0$,
 $$ \int_{n}^{\infty}f_{n-1}(u)\mathrm{d}u > c$$
(indeed, $n$ is very close to the median of a chi-squared random variable with $n-1$ degrees of freedom). Combining the above inequalities we obtain
 $$  \psi_{+}\geq C\sqrt{\frac{s^{7/4}(p-s)^{1/4}}{n\log\left(1+\frac{a^{2}}{4\sigma^{2}}\right)}}e^{-\frac{n}{2}\log\left(1+\frac{a^{2}}{4\sigma^{2}}\right)} . $$

\end{proof}
\begin{proof}[\textbf{Proof of Theorem \ref{thm:suf}}]
In view of {Theorem \ref{thm:suf1}}, it is sufficient to bound from above the expression
$$  \psi\left(n,p,s,a,\sigma \right) = \left(p-s\right) \mathbf{P}\left( \sigma \varepsilon \geq t\left( \zeta  \right) \right)+s\mathbf{P}\left( \sigma \varepsilon \geq \left(a\|\zeta\|-t\left( \zeta \right) \right)_{+}\right).  $$
Introducing the event $\mathbb{D} = \left\{ a\| \zeta\| \geq t(\zeta) \right\}$ we get
$$\mathbf{P}\left( \sigma \varepsilon \geq \left(a\|\zeta\|-t\left( \zeta \right) \right)_{+}\right) \leq \mathbf{P}\left( \{ \sigma \varepsilon \geq a\|\zeta\|-t\left( \zeta \right) \} \cap \mathbb{D}\right) +\frac{1}{2} \mathbf{P}\left( \mathbb{D}^{c}\right).$$
Using the assumption on $n_{2}$ we obtain
$$  \mathbf{P}\left( \mathbb{D}^{c} \right) = \mathbf{P}\left( \| \zeta\|^{2} \leq \frac{2\sigma^{2} \log(\frac{p}{s}-1)   }{a^{2}} \right) \leq \mathbf{P}\left( \| \zeta\|^{2} \leq \frac{n_{2}}{2} \right) . $$
Here, $\| \zeta\|^{2}$ is a chi-squared random variable with $n_{2}$ degrees of freedom. Lemma \ref{lem:chi2}
implies
$$ \frac{1}{2}\mathbf{P}\left( \mathbb{D}^{c} \right)  \leq e^{-\frac{n_{2}}{24}}. $$
Thus, to finish the proof it remains to show that
$$ \left(p-s\right) \mathbf{P}\left( \sigma \varepsilon \geq t\left( \zeta  \right) \right)+s\mathbf{P}\left(\{ \sigma \varepsilon \geq a\|\zeta\|-t\left( \zeta \right)  \} \cap \mathbb{D}\right) \leq 2\sqrt{s(p-s)}e^{-\frac{n_{2}}{2}\log\left(1+\frac{a^{2}}{4\sigma^{2}}\right)}. $$
The bound $\mathbf{P}\left( \varepsilon \geq y\right) \leq e^{-\frac{y^{2}}{2}}, \ \forall y > 0,$ on the tail of standard Gaussian distribution yields
\begin{eqnarray*}
\left(p-s\right) \mathbf{P}\left( \sigma \varepsilon \geq t\left( \zeta  \right) \right) &\leq & \left( p-s \right) \int_{0}^{\infty}\frac{ e^{-\frac{1}{2}\left( \frac{\sqrt{u}a}{2\sigma} + \frac{\sigma \log\left( \frac{p}{s}-1\right)}{a\sqrt{u}} \right)^{2}}}{1+\frac{\sqrt{u}a}{2\sigma} + \frac{\sigma \log\left( \frac{p}{s}-1\right)}{a\sqrt{u}}} f_{n_{2}}(u)\mathrm{d}u\\
\nonumber
&\leq & b_{n_{2}}\sqrt{s\left(p-s\right)} \int_{0}^{\infty}e^{-\ \frac{u}{2}\left( 1+\frac{a^{2}}{4\sigma^{2}}\right) } u^{\frac{n_{2}}{2}-1}\mathrm{d}u,
 \end{eqnarray*}
 where $f_{n_{2}}(\cdot)$ is the density of chi-squared distribution with $n_{2}$ degrees of freedom
 and $b_{n_{2}}$ is the corresponding normalizing constant, cf. \eqref{20}.
Using again the bound $\mathbf{P}\left( \varepsilon \geq y\right) \leq e^{-\frac{y^{2}}{2}}, \ \forall y > 0,$ and the inequality
$$  \frac{\sqrt{u}a}{2\sigma} - \frac{\sigma \log\left( \frac{p}{s}-1\right)}{a\sqrt{u}} \geq 0,\qquad \forall u \geq \frac{2\sigma^{2}\log\left(\frac{p}{s}-1\right)}{a^{2}}, $$
we get 
\begin{eqnarray*}
s\mathbf{P}\left( \{ \sigma \varepsilon \geq a\|\zeta\|-t\left( \zeta \right) \} \cap \mathbb{D}\right) &\leq & s \int_{\frac{2\sigma^{2}\log\left(\frac{p}{s}-1\right)}{a^{2}}}^{\infty} e^{-\frac{1}{2}\left( \frac{\sqrt{u}a}{2\sigma} - \frac{\sigma \log\left( \frac{p}{s}-1\right)}{a\sqrt{u}} \right)^{2}}
 f_{n_{2}}(u)\mathrm{d}u\\
\nonumber
&\leq & b_{n_{2}}\sqrt{s\left(p-s\right)} \int_{0}^{\infty}e^{-\ \frac{u}{2}\left( 1+\frac{a^{2}}{4\sigma^{2}}\right) } u^{\frac{n_{2}}{2}-1}\mathrm{d}u.
 \end{eqnarray*}
The change of variable, $v = u\left(1+\frac{a^{2}}{4\sigma^{2}}\right)$ yields
 \begin{eqnarray*}
b_{n_{2}} \int_{0}^{\infty}e^{- \frac{u}{2}\left( 1+\frac{a^{2}}{4\sigma^{2}}\right) } u^{\frac{n_{2}}{2}-1}\mathrm{d}u & = & b_{n_{2}}e^{- \frac{n_{2}}{2}\log\left(1+\frac{a^{2}}{4\sigma^{2}}\right)} \int_{0}^{\infty}e^{- \frac{v}{2} } v^{\frac{n_{2}}{2}-1}\mathrm{d}v  \\
\nonumber
& =   & e^{- \frac{n_{2}}{2}\log\left(1+\frac{a^{2}}{4\sigma^{2}}\right)}. 
 \end{eqnarray*}
That concludes the proof.
\end{proof}

\begin{proof}[\textbf{Proof of Lemma \ref{lem:ck}}]
Recall that the density of a Student random variable $Z$ with $k$ degrees of freedom is given by:
$$f_{Z}(t) = c_{k}^* \left(1+\frac{t^{2}}{k}\right)^{-\frac{k+1}{2}}, \quad t\in{\mathbb R},$$
where $c_{k}^*>0$ satisfies
\begin{equation}\label{eq:ck}
\underset{k \to \infty}{\lim}c_{k}^* = \sqrt{2\pi}. 
\end{equation}
Define, for $t>0$, 
$$
g(t)= -c_{k}^*t^{-1}\left(1+\frac{t^{2}}{k}\right)^{-\frac{k-1}{2}}.
$$
It is easy to check that the derivative of $g$ has the form
$$g'(t) = \left(1+\frac{1}{t^{2}}\right)f_{Z}(t).$$
Hence, for all $b\ge  1/\sqrt{k}$,
$$  -2g(\sqrt{k}b) =2\int_{\sqrt{k}b}^{\infty}g'(t)\mathrm{d}t \leq \mathbf{P}\left(|Z|\ge\sqrt{k}b\right) \leq  4\int_{\sqrt{k}b}^{\infty}g'(t)\mathrm{d}t = -4g(\sqrt{k}b).$$
The lemma follows since, 
in view of (\ref{eq:ck}), there exist two positive constants $c$ and $C$ such that
 $c \leq c_{k}^* \leq C$ for all $  k\geq1$.
\end{proof}

\begin{proof} [\textbf{Proof of Lemma \ref{lem:3}}]
It is not hard to check that the random variable $\frac{\left|u^{\top}V\right|}{\|u\|}$ is $\sigma$-sub-Gaussian for any fixed $u\in\mathbb{R}^{n}$. Also, any $\sigma$-sub-Gaussian random $\zeta$ variable satisfies $\mathbf{P}(|\zeta|\ge t)\le 2e^{-\frac{t^2}{2\sigma^2}}$ for all $t>0$. Therefore, we have the following bound for the conditional probability:
$$
\mathbf{P}\left( \frac{\left|U^{\top}V\right|}{\|U\|} \geq t \|U\| \Big \vert \ U \right) \le 2e^{-\frac{t^2\|U\|^2}{2\sigma^2}}, \quad \forall \ t>0. 
$$
This implies
\begin{align}\nonumber
\mathbf{P}\left( \frac{\left|U^{\top}V\right|}{\|U\|^{2}} \geq t \right) &\le 2\mathbf{E}\Big[e^{-\frac{t^2\|U\|^2}{2\sigma^2}}\mathbf{1}\left\{ \| U\| \geq \sqrt{n}/{2} \right\}\Big] + \mathbf{P}\left( \| U \| \leq \sqrt{n}/{2} \right)\\
&\le 2e^{-\frac{nt^2}{8\sigma^2}} + \mathbf{P}\left( \| U \| \leq \sqrt{n}/{2} \right).
\label{eq1}
\end{align}
To bound the last probability, we apply the following inequality  \cite[Proposition 2.6]{wegkamp}.
\begin{lem}\label{lem:weg}
Let $Z_1, Z_2,\dots , Z_n$ be independent, nonnegative random variables with $\mathbf{E}(Z_i) = \mu_i$ and $\mathbf{E}(Z_i^2) \le v^2 $. Then, for all $x>0$,
$$
\mathbf{P}\Big(\frac1n\sum_{i=1}^n (Z_i-\mu_i) \le -x\Big)\le e^{-\frac{nx^2}{2v^2}}.
$$
 \end{lem}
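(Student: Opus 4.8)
The plan is to establish this one-sided deviation inequality by the classical Chernoff (exponential Markov) method applied to the lower tail. Writing $W_i = \mu_i - Z_i$, the event in question is $\{\sum_{i=1}^n W_i \ge nx\}$, where the $W_i$ are independent with mean zero. For any $\lambda>0$, Markov's inequality applied to $\exp(\lambda\sum_i W_i)$ together with independence gives
$$
\mathbf{P}\Big(\sum_{i=1}^n W_i \ge nx\Big)\le e^{-\lambda nx}\prod_{i=1}^n \mathbf{E}\big[e^{\lambda(\mu_i-Z_i)}\big]=e^{-\lambda nx}\prod_{i=1}^n e^{\lambda\mu_i}\,\mathbf{E}\big[e^{-\lambda Z_i}\big].
$$

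First I would control each factor $\mathbf{E}[e^{-\lambda Z_i}]$ by exploiting the nonnegativity of $Z_i$. Since $\lambda Z_i\ge 0$ and the elementary inequality $e^{-y}\le 1-y+y^2/2$ holds for every $y\ge 0$ (one checks that $y\mapsto 1-y+y^2/2-e^{-y}$ vanishes together with its first derivative at $0$ and is convex on $[0,\infty)$), we obtain
$$
\mathbf{E}\big[e^{-\lambda Z_i}\big]\le 1-\lambda\mu_i+\frac{\lambda^2}{2}\mathbf{E}(Z_i^2)\le 1-\lambda\mu_i+\frac{\lambda^2 v^2}{2}\le \exp\Big(-\lambda\mu_i+\frac{\lambda^2 v^2}{2}\Big),
$$
using $\mathbf{E}(Z_i^2)\le v^2$ and $1+u\le e^u$. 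Multiplying by $e^{\lambda\mu_i}$ cancels the mean term, leaving $\mathbf{E}[e^{\lambda(\mu_i-Z_i)}]\le \exp(\lambda^2 v^2/2)$ for every $i$.

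Finally I would collect the factors and optimise in $\lambda$. Taking the product over $i$ yields
$$
\mathbf{P}\Big(\sum_{i=1}^n W_i \ge nx\Big)\le \exp\Big(-\lambda nx+\frac{n\lambda^2 v^2}{2}\Big),
$$
and the choice $\lambda=x/v^2>0$ minimises the exponent, producing exactly $\exp(-nx^2/(2v^2))$, which is the claimed bound. The only delicate point, and the place where the argument could go wrong if handled carelessly, is the second step: the sub-parabolic bound on $\mathbf{E}[e^{-\lambda Z_i}]$ relies crucially on both the nonnegativity of $Z_i$ (so that $e^{-y}\le 1-y+y^2/2$ may be applied with $y=\lambda Z_i$) and the second-moment control $\mathbf{E}(Z_i^2)\le v^2$. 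It is precisely this one-sided structure that renders the \emph{lower} tail Gaussian-type, even though no upper bound on the $Z_i$ is available, so an analogous bound for the upper tail cannot be obtained by the same route.
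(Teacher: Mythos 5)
Your proof is correct, and there is in fact nothing in the paper to compare it against: the paper does not prove this lemma but imports it wholesale, citing Proposition 2.6 of \cite{wegkamp} inside the proof of Lemma \ref{lem:3}. Your argument is the standard Chernoff (exponential-moment) route to such one-sided bounds for sums of nonnegative variables: every step checks out, namely the reduction to $\mathbf{P}(\sum_i W_i \ge nx)$ with $W_i=\mu_i-Z_i$, the pointwise bound $e^{-y}\le 1-y+y^2/2$ for $y\ge 0$ (your convexity verification is valid, since $f(y)=1-y+y^2/2-e^{-y}$ has $f(0)=f'(0)=0$ and $f''(y)=1-e^{-y}\ge 0$), the passage $1+u\le e^u$ which cancels the mean terms exactly, and the optimisation at $\lambda=x/v^2$ giving precisely $\exp(-nx^2/(2v^2))$. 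This is essentially the argument behind the cited result (it is sometimes attributed to Maurer for sums of nonnegative random variables), so what your write-up buys is self-containedness: the paper's reliance on an external reference is replaced by a short elementary proof. Your closing observation is also accurate and worth keeping: the nonnegativity of the $Z_i$ is what makes the \emph{lower} tail sub-Gaussian with only a second-moment assumption, and no analogous upper-tail bound can be extracted by the same method.
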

Using this lemma with $Z_i=U_i^2$, $\mu_i\equiv 1$, $x=3/4$, and $v^2=\sigma_1^4$ we find
$$
\mathbf{P}\left( \| U \| \leq \sqrt{n}/{2} \right) \le e^{-\frac{9n}{32 \sigma_1^4}},
$$
which together with \eqref{eq1} proves the lemma.
\end{proof}

\begin{proof} [\textbf{Proof of Proposition \ref{prop:subGaussian}}] Under the assumptions of the proposition,  the columns of matrix $X$ have the covariance matrix $\mathbb{I}_{p}$. Without loss of generality, we may assume that this covariance matrix is $\frac{1}{2} \mathbb{I}_{p}$ and replace $\sigma$ by $\frac{\sigma}{\sqrt{2}}$. We next define the event
$$\mathbb{A} = \{ \text{the design matrix } X \text{ satisfies the $WRE(s,20)$ condition}\},$$
where the $WRE$ condition is defined in \cite{bellec}.
It is easy to check that the assumptions of {Theorem 8.3} in \cite{bellec} are fulfilled, with $\Sigma = \frac{1}{2}\mathbb{I}_{p}$, $\kappa = \frac{1}{2}$ and  $ n_{1} \geq C_{0}s\log(2p/s)$ for some $C_{0}>0$ large enough. Using {Theorem 8.3} in \cite{bellec} we get 
$$ \mathbf{P}\left( \mathbb{A}^{c} \right) \leq 3 e^{-C's\log{2p/s}}, $$
for some $C'>0$.
Now, in order to prove the proposition, we use the bound
$$ \mathbf{P}_{\beta}\left( \|\hat{\beta} - \beta\|^{2} \geq \sigma^{2} \delta^{2} \right) \leq \mathbf{P}_{\beta}\left( \left\{ \|\hat{\beta} - \beta\|^{2} \geq \sigma^{2} \delta^{2} \right\} \cap \mathbb{A} \right) + \mathbf{P}\left( \mathbb{A}^{c} \right) .   $$
Under the assumption $ n_{1} \geq C_{0}s\log(ep/s)/\delta^2$, we have 
$$  \mathbf{P}_{\beta}\left( \left\{ \|\hat{\beta} - \beta\|^{2} \geq \sigma^{2} \delta^{2} \right\} \cap \mathbb{A} \right) \leq \mathbf{P}_{\beta}\left( \left\{ \|\hat{\beta} - \beta\|^{2} \geq C_{0}\sigma^{2} \frac{s\log{ep/s}}{n_{1}}\right\} \cap \mathbb{A} \right). $$
By choosing $C_{0}$ large enough, and using {Proposition 4} from \cite{collier2018} we get that, for some $C''>0$,
$$\mathbf{P}_{\beta}\left( \left\{ \|\hat{\beta} - \beta\|^{2} \geq \sigma^{2} \delta^{2} \right\} \cap \mathbb{A} \right) \leq C''\left(e^{-s\log(2p/s)/C''} + e^{-n_{1}/C''}\right).$$
Recalling that $n_{1} \geq C_0s\log(2p/s)$ and combining the above inequalities we obtain the result of the proposition with $C_{1} = 2C''+3$ and $C_{2}=C' \wedge 1/C'' \wedge C_0/C''$.
\end{proof}
\begin{lem}\label{lem:MOM_SLOPE}
Let $\hat{\beta}^{*}$ be the MOM-SLOPE estimator of \cite{lec17}. Let $X$ and $\xi$ satisfy the 
conditions of Section 6. Then, $\hat{\beta}^{*}$ satisfies Assumption~\ref{MOM-SLOPE}.
\end{lem}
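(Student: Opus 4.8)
The plan is to derive the lemma directly from the estimation guarantees established for the MOM-SLOPE estimator in \cite{lec17}, which already handle sub-Gaussian isotropic designs, heavy-tailed noise, and a controlled fraction of outliers. First I would recall the relevant oracle inequality from \cite{lec17}: for a number of blocks $K$, the MOM-SLOPE estimator $\hat{\beta}^*$ satisfies, on an event of probability at least $1-2e^{-cK}$,
$$ \|\hat{\beta}^* - \beta\| \le C\sigma\left(\sqrt{\frac{s\log(ep/s)}{n_1}} \vee \sqrt{\frac{K}{n_1}}\right), $$
provided the design has i.i.d. $\sigma_X$-sub-Gaussian entries with $\E(X_{ij}^2)=1$, the noise has finite $(2+q)$-th moment bounded by $\sigma^{2+q}$, and the number of \emph{corrupted} blocks is strictly less than $K/2$. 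The constants $c,C>0$ depend only on $q$ and $\sigma_X$. Since each outlier can contaminate at most one block, the condition on corrupted blocks is guaranteed as soon as $|O|<K/2$, and no separate outlier term inflates the rate beyond this majority requirement.

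Second, I would fix the number of blocks to $K=\lceil 2c_0 s\log(ep/s)\rceil$ for a constant $c_0$ to be chosen. The hypothesis $|O|\le c_0 s\log(ep/s)$ then yields $|O|<K/2$, so a strict majority of blocks is outlier-free and the oracle inequality applies; moreover this choice keeps the two terms inside the maximum of the same order. The hypothesis $c_0 s\log(ep/s)\le n_1/2$ gives $n_1\ge 2c_0 s\log(ep/s)$, whence
$$ C\sigma\sqrt{\frac{s\log(ep/s)}{n_1}} \le \frac{C\sigma}{\sqrt{2c_0}} \le \sigma $$
once $c_0\ge C^2/2$. Therefore, on the good event, $\|\hat{\beta}^*-\beta\|\le\sigma$.

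Third, I would convert the deviation probability into the required form. With the above choice of $K$,
$$ 2e^{-cK} \le 2e^{-2cc_0 s\log(ep/s)} = 2\Big(\frac{s}{ep}\Big)^{2cc_0 s} \le 2\Big(\frac{s}{p}\Big)^{2cc_0 s}, $$
using $(s/(ep))^{2cc_0 s}\le (s/p)^{2cc_0 s}$. This matches the conclusion of Assumption~\ref{MOM-SLOPE} with $c_1=2$ and $c_2=2cc_0$. The supremum over $|\beta|_0\le s$ is immediate since all bounds are uniform in $\beta$, and each of $c_0,c_1,c_2$ depends only on $q$ and $\sigma_X$ through the constants $c,C$ of \cite{lec17}.

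I expect the main obstacle to be purely bookkeeping: matching the precise statement of the oracle inequality in \cite{lec17} — which may be given for a fixed confidence level rather than with explicit $K$-dependence — to the sparsity-calibrated regime $K\asymp s\log(ep/s)$, and checking that the MOM tuning parameters used there remain compatible with driving the leading constant below $1$. The substantive probabilistic work, namely controlling the sub-Gaussian empirical process and the median-of-means deviations under only $(2+q)$ moments on the noise, is already carried out in \cite{lec17}, so no new concentration argument is needed here.
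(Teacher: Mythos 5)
There is a genuine gap, and it is exactly complementary to what you did write. Your argument treats the result of \cite{lec17} as if it were stated under the primitive conditions of Section 6 of this paper (i.i.d.\ $\sigma_X$-sub-Gaussian design entries with unit variance, noise with bounded $(2+q)$-th moment, few outliers). It is not: the MOM-SLOPE guarantee in \cite{lec17} (their Theorem 6) holds under an abstract set of hypotheses (items 1--5 of their Assumption 6), namely a bound on the number of outliers, an $L^d$--$L^2$ moment-equivalence condition for the marginals $\mathbf{x}_1^{\top}t$, a moment condition on the noise, a small-ball / $L^1$--$L^2$ equivalence condition $\mathbf{E}|\mathbf{x}_1^{\top}t| \geq C\|t\|$, and a variance bound on $\xi_1\,\mathbf{x}_1^{\top}t$. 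The entire content of the paper's proof is the verification that the Section 6 conditions imply these abstract hypotheses: the moment equivalence follows from sub-Gaussianity of $\mathbf{x}_1^{\top}t$ together with the exact isometry $\mathbf{E}|\mathbf{x}_1^{\top}t|^2 = \|t\|^2$; the $L^1$--$L^2$ lower bound requires the Marcinkiewicz--Zygmund inequality (this is a substantive step, not bookkeeping --- a first-moment lower bound does not come for free from upper moment bounds); and the variance condition uses independence of $\xi$ and $X$. Your proposal dismisses all of this as "already carried out in \cite{lec17}," which is precisely backwards: what is carried out in \cite{lec17} is the concentration analysis \emph{given} those hypotheses, and what must be supplied here is the check that a sub-Gaussian i.i.d.\ design with heavy-tailed independent noise satisfies them.

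On the other hand, the part you did carry out --- tuning the number of blocks $K \asymp s\log(ep/s)$, checking that $|O| \leq c_0 s\log(ep/s)$ gives a majority of clean blocks, driving the error below $\sigma$ via $n_1 \geq 2c_0 s\log(ep/s)$, and converting the deviation probability $2e^{-cK}$ into the form $c_1(s/p)^{c_2 s}$ --- is the part the paper compresses into the single sentence "Application of this theorem yields the result," and your rendering of it is sound. So the two arguments together would make a complete proof; yours alone does not, because it assumes the conclusion of the reduction that constitutes the paper's actual argument.
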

\begin{proof} [\textbf{Proof of Lemma \ref{lem:MOM_SLOPE}}] We apply Theorem 6 in \cite{lec17}. Thus, it is enough to check that items 1-5 of Assumption 6 in \cite{lec17} are satisfied. Item 1 is immediate since $|I| = n_{1} - |O| \geq n_{1}/2$, and $|O| \leq c_{0}s\log(ep/s)$. To check item 2, we  first note that the random variable ${\bf x}^{\top}_1t$ is $ \| t \|\sigma_X$-sub-Gaussian for any $t \in \mathbb{R}^{p}$. It follows from the standard properties of sub-Gaussian random variables \cite[Lemma 5.5]{vershynin} that,
 for some $C>0$,
$$ \big(\mathbf{E}|{\bf x}^{\top}_1t |^{d} \big)^{1/d} \leq  C \| t \| \sqrt{d},\quad  \forall t \in \mathbb{R}^{p}, \forall d \geq 1.$$
On the other hand, since the elements of ${\bf x}_1$ are centered random variables with variance 1,
\begin{equation}\label{iso}
 \big(\mathbf{E}|{\bf x}^{\top}_1t |^{2} \big)^{1/2} =  \| t \| , \quad  \forall t \in \mathbb{R}^{p}.
 \end{equation}
Combining the last two displays proves item 2. 
Item 3 holds since we assume that $\mathbf{E}(|\xi_{i}|^{q_0}) \leq \sigma^{q_0},  i \in I,$ with $q_0=2+q$. 
To prove item 4, we use \eqref{iso} and the fact that, for some $C>0$,
$$\mathbf{E}|{\bf x}^{\top}_1t |  \ge C  \| t \| , \quad  \forall t \in \mathbb{R}^{p},$$
due to Marcinkiewicz-Zygmund inequality \cite[page 82]{petrov}.
Finally we have that, for some $c>0$,
$$ {\rm Var}( \xi_{1} {\bf x}^{\top}_1t) \leq \mathbf{E}[\xi_{1}^{2}] \,\mathbf{E}\big[ ({\bf x}^{\top}_1t)^{2} \big] \leq c \mathbf{E}\big[ ({\bf x}^{\top}_1t)^{2} \big], \quad  \forall t \in \mathbb{R}^{p}.$$
Thus, all conditions of Theorem 6 in \cite{lec17} are satisfied. Application of this theorem yields the result.
\end{proof}

\begin{proof}[\textbf{Proof of Lemma \ref{lem:7}}]
We first prove that for all $i \in I$ and $1\leq j \leq p$,  
\begin{equation}\label{ebig}
\mathbf{E}_{\beta}\big( 
            (\varepsilon_{j}^{(i)})^{2} \mathbf{1} \left\{ \mathbb{A}_* \right\} 
        \big) 
    \leq C\frac{K\sigma^{2}}{n} ,
    \end{equation}
    where $C>0$ depends only on the sub-Gaussian constant $\sigma_X$.
Indeed, 
the components of $ \varepsilon^{(i)}$ have the form
$$ \varepsilon^{(i)}_{j} = \left( \frac{1}{q}\|{\bf X}^{(i)}_{j}\|^{2} - 1\right)(\beta_{j} - \hat{\beta}_{j}^*) + \frac{1}{q} \sum_{k \neq j} {\bf X}^{(i)\top}_{j}{\bf X}_{k}^{(i)} (\beta_{k} - \hat{\beta}_{k}^*) + \frac{1}{q}  {\bf X}_{j}^{(i)\top}\xi,$$
where ${\bf X}^{(i)}_{j}$ is the $j$th column of ${\bf X}^{(i)}$.
Conditioning first on ${\bf X}^{(i)}_{j}$, we get
\begin{align*}\mathbf{E}_{\beta}\big( 
            (\varepsilon_{j}^{(i)})^{2} \mathbf{1} \{ \mathbb{A}_*\} 
        \big)  
& \leq 2
        \mathbf{E}_{\beta}\left[
            \left( \|\hat{\beta}^* - \beta\|^{2}\left( \frac{1}{q}\|{\bf X}^{(i)}_{j}\|^{2} - 1 \right)^{2} 
            + \frac{1}{q^{2}}\left( \|\hat{\beta}^* - \beta\|^{2} + \sigma^{2} \right) \| {\bf X}_{j}^{(i)}\|^{2} \right)
            \mathbf{1}\{\mathbb{A}_*\} 
        \right]
        \\
 & \leq 2\sigma^{2}
        \mathbf{E}\left[
            \left( \frac{1}{q}\|{\bf X}^{(i)}_{j}\|^{2} - 1 \right)^{2} 
            + \frac{2}{q^{2}}\| {\bf X}_{j}^{(i)}\|^{2} 
        \right].        
        \end{align*}
         Since $\mathbf{E}(X_{kl}^{2}) =1$ for all $k$ and $l$, we have 
        $\mathbf{E}\|{\bf X}^{(i)}_{j}\|^{2}=q$. Furthermore, $\mathbf{E}( X_{kl}^{4}) \leq \bar C$ where $\bar C$ depends only on the sub-Gaussian constant $\sigma_X$. Using these remarks we obtain from the last display that
        $$\mathbf{E}_{\beta}\big( 
            (\varepsilon_{j}^{(i)})^{2} \mathbf{1} \{ \mathbb{A}_*\} 
        \big)  
        \leq \frac{2(\bar C+2)\sigma^{2}}{q}. $$      
As $q=\lfloor n/K \rfloor$ this yields \eqref{ebig}. 

Next,  the definition of the median immediately implies that
  $$ \left\{ |Med(\varepsilon_{j})| \geq t \right\} \subseteq  \left\{ \sum_{i=1}^{K} \mathbf{1}_{\{|\varepsilon_{j}^{(i)}| \geq t \}} \geq \frac{K}{2} \right\}, \quad \forall t>0. $$
It follows that
  \begin{align*}
      \mathbf{P}_{\beta}\left( |Med(\varepsilon_{j})| \geq t \right) &\leq \mathbf{P}_{\beta}\left( \left\{ |Med(\varepsilon_{j})| \geq t \right\} \cap \mathbb{A_*} \right) + \mathbf{P}(\mathbb{A}_*^{c}) \\
      &\leq  \mathbf{P}_{\beta}\left( \left\{  \sum_{i=1}^{K} \mathbf{1}_{\{|\varepsilon_{j}^{(i)}| \geq t \}} \geq \frac{K}{2} \right\} \cap \mathbb{A}_* \right) + \mathbf{P}_{\beta}(\mathbb{A}_*^{c}) \\
      & \leq \mathbf{P}_{\beta}\left( \sum_{i=1}^{K} \mathbf{1}_{\{|\varepsilon_{j}^{(i)}| \geq t \} \cap \mathbb{A}_*} \geq \frac{K}{2}  \right) + \mathbf{P}_{\beta}(\mathbb{A}_*^{c}). 
  \end{align*}
Since the number of outliers $|O|$ does not exceed $\lfloor K/4\rfloor$  there are at least $K':= K-\lfloor K/4\rfloor$ blocks that contain only observations from $I$. Without loss of generality,  assume that these  blocks are indexed by $1,\dots, K'$. Hence 
 \begin{eqnarray}
 \mathbf{P}_{\beta} ( |Med(\varepsilon_{j})| \geq t)  \leq \mathbf{P}_{\beta}\bigg( \sum_{i=1}^{K'} \mathbf{1}_{\{|\varepsilon_{j}^{(i)}| \geq t \} \cap \mathbb{A}_*} \geq \frac{K}{4}  \bigg) + \mathbf{P}_{\beta}(\mathbb{A}_*^{c}).  
 \label{mom1}
\end{eqnarray}
Note that using \eqref{ebig}  we have, for all $i=1,\dots,K'$,
$$ \mathbf{P}_{\beta}\left( \{|\varepsilon_{j}^{(i)}| \geq t \} \cap \mathbb{A}_* \right) \leq
\mathbf{E}_{\beta}\big( 
            (\varepsilon_{j}^{(i)})^{2} \mathbf{1} \{ \mathbb{A}_*\} 
        \big)  /t^2
  \leq \frac{CK\sigma^{2}}{t^{2}n} \leq \frac{1}{5}. $$
The last inequality is granted by a choice of large enough constant $c_4$ in the definition of $t$. Thus, introducing the notation $\zeta_i = \mathbf{1}_{\{|\varepsilon_{j}^{(i)}| \geq t \} \cap \mathbb{A}_*}$ we obtain
\begin{eqnarray}\nonumber
\mathbf{P}_{\beta}\bigg( \sum_{i=1}^{K'} \mathbf{1}_{\{|\varepsilon_{j}^{(i)}| \geq t \} \cap \mathbb{A}_*} \geq \frac{K}{4}  \bigg)&\le& 
\mathbf{P}_{\beta}\bigg( \sum_{i=1}^{K'} (\zeta_i- \mathbf{E}_{\beta}(\zeta_i)) \geq \frac{K}{4} - \frac{K'}{5}  \bigg)\\
&\le & \mathbf{P}_{\beta}\bigg( \sum_{i=1}^{K'} (\zeta_i- \mathbf{E}_{\beta}(\zeta_i)) \geq \frac{K}{20}   \bigg)\leq e^{-K/400}\label{mom2}
\end{eqnarray}
where the last inequality is an application of Hoeffding's inequality. Combining \eqref{mom1} and \eqref{mom2} proves the lemma.

\end{proof}

\newpage

\bibliographystyle{unsrt}

\end{document}